\title{Deforming convex projective manifolds}
\author{Daryl Cooper}
\address{Department of Mathematics, University of California, Santa Barbara, CA 93106, USA}
\email{cooper@math.ucsb.edu}
\urladdr{http://web.math.ucsb.edu/~cooper/}
\author{Darren Long}
\address{Department of Mathematics, University of California, Santa Barbara, CA 93106, USA}
\email{long@math.ucsb.edu}
\urladdr{http://web.math.ucsb.edu/~long/}
\author{Stephan Tillmann}
\address{School of Mathematics and Statistics, The University of Sydney, NSW 2006, Australia}
\email{tillmann@maths.usyd.edu.au}
\urladdr{http://www.maths.usyd.edu.au/u/tillmann/}
\newtheorem{theorem}{Theorem}[section]
\newtheorem{lemma}[theorem]{Lemma}
\newtheorem{corollary}[theorem]{Corollary} 
\newtheorem{definition}[theorem]{Definition} 
\newtheorem{proposition}[theorem]{Proposition}
\def\text{\mbox}
\def\RR{{\mathbb R}}
\def\PP{{\mathbb P}}
\def\SS{{\mathbb S}}
\def\CC{{\mathbb C}}
\def\FF{{\mathbb F}}
\def\ZZ{{\mathbb Z}}
\def\RA{{\mathbb L}}
\def\PPp{{\mathbb P}_+}
\def\PGLp{\operatorname{P_+GL}}
\def\RPn{{\mathbb{RP}}^n}
\def\RP{{\mathbb{RP}}}
\def\Fr{{\operatorname{Fr}}}
\def\Diff{{\operatorname{Diff}}}
\def\Im{\operatorname{Im}}
\def\interior{\operatorname{int}}
\def\SL{\operatorname{SL}}
\def\CH{\operatorname{CH}}
\def\PGL{\operatorname{PGL}}
\def\GL{\operatorname{GL}}
\def\UT{\operatorname{UT}}
\def\VFG{\operatorname{VFG}}
\def\Hom{\operatorname{Hom}}
\def\Rep{\operatorname{Rep}}
\def\PC{\operatorname{Rep_{ce}}}
\def\cl{{\rm cl}}
\def\dev{\operatorname{dev}}
\def\Hol{\mathcal Hol}
\def\Sym{\operatorname{Sym}}
\def\hol{\operatorname{hol}}
\def\flow{\Phi}
\def\charfn{\rchi}
\def\diag{\operatorname{diag}}
\def\flowfn{\operatorname{c}}
\def\Vcal{\mathcal V}
\def\Ucal{\mathcal U}
\def\Bcal{\mathcal B}
\def\R{\mathbb R}
\def\Kleinian{\mathcal K}
\def\devGX{\mathcal Dev}
\def\GC{\mathcal{GC}}
\def\devc{{\mathcal Dev}_c}
\def\vinbergsurface{S}
\def\sphere{\mathbb S}
\def\enddata{\mathcal Rel\mathcal Hol}
\def\enddatamap{\operatorname{\mathcal E}}
\def\domains{{\green\mathcal P}}
\def\alldomains{\domains'}
\def\image{\operatorname{Im}}
\def\proj{\mathbb P}
\def\core{\operatorname{core}}
\def\Ztwo{\mathbb Z_2}
\def\clsdsub{{\green\mathfrak{C}}}
\def\Cone{{\green\mathcal{C}}}
\DeclareRobustCommand{\rchi}{{\mathpalette\irchi\relax}}
\newcommand{\irchi}[2]{\raisebox{\depth}{$#1\chi$}} 
\def\edit{}
\def\green{}
\newcommand{\bv}{\left[\begin{array}{c}}
\newcommand{\ev}{\end{array}\right]}
\newcommand{\bbmat}{\begin{bmatrix}} 
\newcommand{\ebmat}{\end{bmatrix}}
\newcommand{\bmat}{\begin{matrix}} 
\newcommand{\emat}{\end{matrix}}
\newcommand{\bpmat}{\begin{pmatrix}} 
\newcommand{\epmat}{\end{pmatrix}}
\begin{document}

\begin{abstract}    
We study a properly convex real projective manifold with
(possibly empty) compact, strictly{\edit-}convex boundary,
and which consists of a compact part plus finitely many convex ends.
We extend a theorem of Koszul which asserts that for a compact manifold
without boundary the holonomies
of properly convex structures form an open subset of the representation variety.
We also give a relative version for non-compact $(G,X)$-manifolds of the openness of
their holonomies.
\end{abstract}

\maketitle


Given a subset $\Omega\subset{\mathbb R}P^n$ 
the {\em frontier} is $\Fr(\Omega)=\cl({\Omega})\setminus \interior(\Omega)$ and the {\em boundary} is $\partial\Omega=\Omega\cap\Fr(\Omega)$.
 A  {\em properly convex projective manifold} is $M=\Omega/\Gamma,$ where
  $\Omega\subset{\mathbb R}P^n$
is a convex set with non-empty interior, and $\cl({\Omega})$ {\edit does not contain any $\RP^1$,} 
and $\Gamma\subset\PGL(n+1,{\mathbb R})$ acts freely and properly discontinuously on $\Omega$.
If, in addition,  $\Fr(\Omega)$ contains no line segment then $M$ and $\Omega$ are {\em strictly{\edit-}convex}.
The {\em boundary of $M$ is  strictly-convex} if $\partial\Omega$ contains no line segment. 

If $M$ is a {\em compact} $(G,X)$-manifold then a sufficiently small deformation of the holonomy
gives another $(G,X)$-structure on $M$.
In  \cite{Kos1, Kos2} Koszul proved  a similar result holds for closed, 
properly convex, projective manifolds.
 In particular,  nearby holonomies continue to be discrete and faithful representations
of the fundamental group.  

Koszul's theorem cannot be 
generalized to the case of non-compact manifolds without some 
qualification---for example,  a sequence of hyperbolic surfaces whose completions have cone singularities
can converge to a hyperbolic surface with a cusp. The holonomy of a cone surface 
 in general is neither discrete nor faithful.
Therefore we must impose conditions on the holonomy of each end.

{\edit If $M$ is a geometrically finite hyperbolic manifold $M$ with a convex core that has compact boundary, then
every end of $M$
 is topologically a product, and is foliated by strictly{\edit-}convex hypersurfaces.
These hypersurfaces are either convex {\edit towards} $M$ so that cutting along one gives a submanifold
of $M$ with convex boundary, and the holonomy of the end
contains only hyperbolics; or else
convex {\em away} from $M$, in which case the end is a cusp and the holonomy of the end contains only parabolics. }

 {\edit This paper  studies properly convex} manifolds whose ends are either convex towards or away from $M$. {\edit An end that
 is convex towards $M$ may be compactified by adding a convex boundary.}
 {\em Generalized cusps}
are those that are convex away from $M$ with
virtually nilpotent fundamental group. {\edit The holonomy of a generalized cusp
may contain both hyperbolic and parabolic elements. }

   \begin{definition} A {\em generalized cusp}  is a 
properly convex manifold $C$ homeomorphic to $\partial C\times[0,\infty)$ 
with compact, strictly-convex boundary and with $\pi_1C$ virtually
nilpotent. \end{definition}

For instance, all ends of 
 a finite volume hyperbolic manifold  are generalized cusps.
For an $n$--manifold $M$, possibly with boundary, 
 define $\Rep(\pi_1M)=\Hom(\pi_1 M,\GL(n+1,{\mathbb R}))$ and $\PC(M)$ to be the 
 subset of $\Rep(\pi_1M)$ consisting of holonomies of properly convex structures on $M$ 
 with $\partial M$ strictly{\edit-}convex{\edit,} and such that each end is a generalized cusp. 
{\edit A group $\Gamma\subset \GL(n+1,{\mathbb R})$  is a 
 {\em  virtual flag group}  if it contains
a subgroup  of finite index that  is conjugate
into the upper-triangular group. The set of virtual flag groups is written $\VFG$. }

\begin{theorem}\label{deformmfd} Suppose $N$ is a compact connected $n$--manifold 
and $\Vcal = \cup_i V_i \subseteq\partial N$ is the union of some of the boundary components of $N.$
Let $M=N\setminus\Vcal$. Assume $\pi_1V_i$
is virtually nilpotent for each $i$. {\edit Let $B_i\cong V_i\times[0,1)$ be the
 end of $M$ corresponding to $V_i$.
Then $\PC(M)$ is an open subset of \[\VFG(M):=\{\rho\in \Rep(\pi_1M) : \forall i\  \rho(\pi_1B_i)\in\VFG\}.\]}
\end{theorem}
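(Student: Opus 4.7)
The strategy is to split $M$ into a compact core plus the generalized cusp ends of the given structure, deform the core using a relative form of Koszul's theorem, replace each end by a new generalized cusp constructed from the deformed holonomy, and glue. Concretely, fix $\rho_0\in\PC(M)$ with $M\cong\Omega_0/\rho_0(\pi_1 N)$, choose pairwise disjoint generalized cusp neighborhoods $E_i^0\subset M$ of the end corresponding to $V_i$, and let $K$ be the closure of $M\setminus\bigcup_i E_i^0$. Then $K$ is a compact properly convex manifold with strictly convex boundary---the original $\partial M$ together with the inner components $T_i:=K\cap E_i^0$---and the inclusion $K\hookrightarrow M$ is a homotopy equivalence, so $\pi_1 K\cong\pi_1 N$.

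The first main step is to apply a relative openness result for compact properly convex $(G,X)$-manifolds with strictly convex boundary to the core $K$. For $\rho$ sufficiently close to $\rho_0$ this should yield a deformed properly convex structure on $K$ whose boundary remains strictly convex, whose holonomy is $\rho$, and whose developing map $\dev_\rho$ varies continuously with $\rho$. As a consequence, each inner boundary $T_i$ deforms to a strictly convex $\rho(\pi_1 V_i)$-invariant hypersurface $T_i(\rho)$ in $\RP^n$, close to $T_i$ on compact sets.

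The second and harder step is to construct, for each end, a generalized cusp $E_i(\rho)$ whose holonomy is $\rho|_{\pi_1 V_i}$ and whose inner boundary can be identified with $T_i(\rho)$. The standing hypotheses---$\pi_1 V_i$ virtually nilpotent and $\rho(\pi_1 V_i)\in\VFG$---are exactly what the structure theory of generalized cusps requires, so existence of \emph{some} generalized cusp with this holonomy should be available. The main obstacle is the \emph{matching problem}: one must position $E_i(\rho)$ inside $\RP^n$ so that its inner boundary is precisely the hypersurface $T_i(\rho)$ produced by deforming the core. I expect this to be the principal technical difficulty, and to follow from a uniqueness-up-to-conjugacy statement for generalized cusps with prescribed VFG holonomy, applied in a parameter-dependent manner together with continuity of $\dev_\rho$ and the fact that $\rho_0|_{\pi_1 V_i}$ already bounds the original cusp $E_i^0$.

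Finally, gluing each $E_i(\rho)$ to the deformed core $K(\rho)$ along $T_i(\rho)$ yields a properly convex projective structure on $M$: convexity is preserved because a properly convex domain together with generalized cusps attached along strictly convex hypersurfaces is again properly convex, and properness (discreteness and free action) of $\rho(\pi_1 N)$ on the resulting domain is an open condition around $\rho_0$. The strict convexity of $\partial M$ persists because it persisted on $K(\rho)$, and the ends are generalized cusps by construction. Hence $\rho\in\PC(M)$, which is the required openness.
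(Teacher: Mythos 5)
Your overall architecture (compact part plus cusp ends, deform each, reassemble) matches the paper's, but two of your steps conceal the actual content of the theorem. First, the compact core $K$ is \emph{not} a properly convex manifold with strictly convex boundary along the inner components $T_i$: a generalized cusp is convex \emph{outwards}, so its boundary bulges toward the compact part, and cutting along $\partial B_i$ leaves the core with concave boundary there. Hence there is no ``relative openness result for compact properly convex manifolds with strictly convex boundary'' to apply to $K$; the paper instead deforms the projective structure on all of $M$ relative to the ends using only the $(G,X)$-Extension Theorem (\ref{relopen}), with no convexity claim at that stage. Second, and more seriously, your gluing step asserts that ``a properly convex domain together with generalized cusps attached along strictly convex hypersurfaces is again properly convex.'' That assertion is the entire difficulty of the theorem, not a remark: a union of convex pieces glued along a hypersurface need not be convex, and even local convexity everywhere does not give proper convexity without a completeness argument. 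The paper's resolution is Koszul's criterion: it builds a single complete Hessian-convex flow function on the tautological bundle $\xi M_\sigma$ by blending the characteristic convexity functions of the deformed ends with one for the original structure, using a localization function and the uniform Benz\'ecri estimates (\ref{uniformconvexity}), (\ref{closecharfn}), (\ref{nearbycharfn}); properly convexity then follows from (\ref{hessianconvex}) and (\ref{RFhessianisconvex}). Nothing in your proposal supplies a substitute for this global argument.

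On the ``matching problem'': you have identified a real issue but propose the wrong fix. There is no uniqueness-up-to-conjugacy for generalized cusps with prescribed holonomy (a cusp can always be shrunk, and inequivalent domains can share a holonomy group), so you cannot position $E_i(\rho)$ to have inner boundary exactly $T_i(\rho)$. The paper sidesteps exact matching: the Convex Extension Theorem (\ref{kleinianedndatamapopen}) only requires the deformed cusp domain $\Omega_i(\sigma)$ to be close to the original in the Hausdorff boundary topology, after which the isotopy lemma (\ref{isotopsubstructure}) adjusts developing maps so the structures agree on a collar, and convexity is re-established globally as above. The nearby cusps themselves are produced by the stability theorem (\ref{cuspstab}), via the translation group $T(\sigma)=\exp\langle\log\sigma(H)\rangle$, the persistence of strict convexity of its orbit (\ref{deformC}), and the backward radial flow construction --- this is the structure theory you correctly flagged as necessary but left as a black box.
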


A similar statement holds for orbifolds since a properly convex orbifold has a finite cover which is
a manifold, and the property of being properly convex is unchanged by coverings. This theorem
is a consequence of our main theorem (\ref{mainthm}) that a certain map is open. By
 (\ref{VnilVFG})  $\rho(\pi_1B_i)\in\VFG$ iff there is a finite index subgroup $\Gamma<\rho(\pi_1B_i)$
such that every eigenvalue of  every element of $\Gamma$ is real.

There is a Margulis
lemma for {\em properly} convex manifolds that says the local fundamental group is virtually nilpotent
(0.1) in Cooper, Long and Tillmann \cite{CLT1},  also Crampon and Marquis \cite{CM1}. 
There is a {\em thick-thin} decomposition for {\em strictly} convex manifolds
 (0.2) in \cite{CLT1},  also Crampon and Marquis \cite{CM2}; but
 not for {\em properly} convex manifolds. Each component
of the thin part of a {\em strictly} convex manifold 
is a Margulis tube or a {\em cusp} and has virtually nilpotent fundamental group consisting of parabolics.
This motivates the
definition of {\em generalized cusp} above.
There is a discussion of cusps in properly convex manifolds in \S 5 of  \cite{CLT1}. 

 Here is some intuition for the proof of (\ref{deformmfd}). Many of the ideas are already present for surfaces. 
Suppose $M$ is a projective compact surface without boundary. 
There is a developing map $\dev:\widetilde{M}\rightarrow \RP^2$. If $M$ is properly convex then this map
is injective and the image is a domain $\Omega\subset\RR^2$ bounded by a convex closed curve $C\subset\RR^2$. There
is a compact polygonal fundamental domain $D\subset \Omega$ and the images of $D$ under the holonomy
$\rho:\pi_1M\rightarrow \Gamma\subset \PGL(3,\RR)$ tesselate $\Omega$. 
Small images of $D$ accumulate on $C$. Suppose $\rho'$ is a nearby homomorphism.  Our aim is
to show there is a convex closed curve $C'$ close to $C$ that bounds a domain $\Omega'$ that is 
preserved by $\Gamma'=\rho'(\pi_1M)$. 

The convexity of $C$
 is a phenomenon that {\em takes place at infinity} with regard to $\Omega$.
A priori, there is no reason to expect this phenomenon to be stable with respect to small deformations.
A finite generating 
set for $\pi_1M$ determines a word metric on $\pi_1M$. If $g\in\pi_1M$ is not
too far from the identity in the word metric then $\rho'(g)$ and $\rho(g)$ are close and send $D$ to almost the same
set. However, for large elements $g$, one {\em loses control,} and there is no obvious reason why 
 images of $D$ should accumulate on some convex curve $C'$. Thus {\em convexity} of $C=\partial\Omega$ is
 a {\em limiting feature} of large group elements in $\pi_1M$, and this might be destroyed by arbitrarily
 small deformations $\rho'$. In fact this is what happens with the example of surfaces with cone
 singularities discussed above. Convexity is {\edit stable} for {\em closed} surfaces for reasons that we now outline.
 
 {\edit Let $U\subset\RR^3$ be the half space $x_3\ge 0$ and $P=\partial U$ the plane $x_3=0.$
  Suppose $D\subset U$ is a disc, bounded by the simple closed curve
   $C=D\cap P$, and $S=\interior(D)$.  If $S$ is convex down then {\edit
 $C$ is convex, and} bounds a convex domain $\Omega\subset P$,
 and  $S$ is a graph over $\Omega$.
 The condition that $S$ is convex down is a condition over (finite) points in $\Omega$ rather than a condition at infinity.
 One might imagine that $S$ is the surface of a mountain.  If this surface
  is convex then the boundary of the base of the mountain is also convex.
 
 The {\em tautological line bundle} $\xi M$ over $M$ is an affine manifold. 
 For a properly convex projective
 structure on $M$,  there is a section $\sigma(M)$ of this bundle that
 is a surface in $\xi M$ which is {\em strictly} convex in the sense
 that the Hessian is strictly positive. The image of {\edit the universal cover of} 
 $\sigma(M)$ under the developing map is a surface  $S\subset\RR^3$ that limits
 on the sphere at infinity. By 
 viewing $\RR^3\subset\RP^3$, and choosing
 a suitable new affine patch where the sphere at infinity becomes $P$, 
 we can instead view  $S$ as a surface in $U$
  as above, and
 then $M=\Omega/\rho(\pi_1M)$.
 If $M$ is compact  then $\sigma(M)$ is
 compact. A small deformation, $M'$ of $M$, gives a nearby
 strictly{\edit-}convex surface $\sigma(M')$ in $\xi M'$, that develops to another
  convex surface $S'$ in $\RR^3$ and gives a convex domain $\Omega'\subset P$. }
  This is the intuition for Koszul's theorem when $M$ is closed. 
  
 Now suppose that $M$ is a compact manifold, $N$, union a  (generalized) cusp $C$. One first shows
 that for the deformed cusp $C'$ that $\xi C'$ contains a nearby 
  strictly
 convex surface $\edit{\sigma(C '})$. Since $N$ is compact $\xi N'$ contains a nearby convex surface {\edit $\sigma(N')$}.
 One now deforms and joins $\sigma(N')$ and $\sigma(C')$, maintaining convexity,
  to obtain a strictly{\edit-}convex hypersurface 
 in $\xi M'$. This  implies $M'$ is properly convex.

Section \ref{sec:deforming (G,X)} describes the {\em $(G,X)$-Extension Theorem} (\ref{relopen}). This generalizes a
 well-known result for compact manifolds (the holonomies of $(G,X)$-structures
form an open subset of the representation variety) by providing  a {\em relative} version.
Section \ref{tautbundlesec} recalls the definition and properties of the tautological bundle. 
Section \ref{sec:hessian} reviews Hessian
metrics and  gives a characterization of properly convex manifolds in terms
of the existence of a certain kind of Hessian metric on the tautological line bundle. 
Section \ref{charfnssection} shows that various functions on properly
convex projective manifolds are uniformly bounded, including a proof of the folklore result
that they admit Riemannian metrics with all sectional curvatures  bounded in terms of  dimension.

The {\em Convex Extension Theorem}  (\ref{kleinianedndatamapopen}) is a version of
(\ref{relopen}) for properly convex manifolds with strictly{\edit-}convex boundary.
A consequence is  (\ref{extendpc}) below. Roughly this says that if you can convexly deform
the ends of a properly convex manifold then you can convexly deform the manifold.
 There is no assumption that the fundamental group of an end is virtually nilpotent for this result.

\begin{theorem}\label{extendpc}
Suppose $M=A\cup\mathcal B$ is a properly convex manifold with (possibly empty) 
compact strictly{\edit-}convex boundary, and
$A$ is a compact  connected submanifold of $M$ with $\partial A=\partial M\sqcup\partial\mathcal B$, and {\edit
$\mathcal B\cong\partial\mathcal B\times[0,\infty)= B_1\sqcup\cdots \sqcup B_k$,  
 and each $B_i$ is connected and $\pi_1$-injective in $M$.}

 Suppose $\rho:(-1,1)\to\Rep(\pi_1M)$ is continuous and  $\rho_t:=\rho(t)$ 
and $\rho_0$ is the holonomy of $M$.
Let $\clsdsub$ denote the space of closed subsets of $\RPn$ with the Hausdorff topology. Suppose 
 for all $1\le i\le k$ and all $t\in(-1,1)$ that 
\begin{itemize}
\item[(1)]   there is a properly convex set $\Omega_i(t)\subset \RPn,$
that is preserved by $\rho_t(\pi_1B_i),$
\item[(2)]  $P_i(t)=\Omega_i(t)/\rho_t(\pi_1B_i)$ is a properly convex manifold and $\partial P_i(t)$ is strictly{\edit-}convex,
\item[(3)] there is a projective diffeomorphism from $P_i(0)$ to $B_i,$
\item[(4)] $P_i(t)$ is diffeomorphic to $B_i,$
\item[(5)] the two maps $t\mapsto \cl(\Omega_i(t))$ and $t\mapsto\cl(\partial\Omega_i(t))$ into $\clsdsub$ are  continuous.
\end{itemize}
 Then there is $\epsilon>0$
such that for all $t\in(-\epsilon,\epsilon)$ there is a properly convex projective structure on $M$ with holonomy
$\rho(t)$ such that $\partial M$ is strictly{\edit-}convex and $B_i$ is projectively diffeomorphic to $P_i(t)$.
\end{theorem}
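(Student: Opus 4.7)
The plan is to reduce Theorem \ref{extendpc} to the Convex Extension Theorem (\ref{kleinianedndatamapopen}), which the introduction advertises as the general openness statement from which this corollary follows. The role of \ref{extendpc} is to repackage that openness statement into the language of deformations of ends: the hypotheses give us a continuous path of end data, and we need to produce a path of properly convex structures on $M$ realizing it.

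My first step is to assemble the given data $(\rho_t,\Omega_i(t))$ into a continuous path in a suitable space of end data. The representations $\rho_t|_{\pi_1B_i}$ together with the convex sets $\Omega_i(t)$ give the holonomy and developed image of each $P_i(t)$. The two Hausdorff-continuity hypotheses on $t\mapsto \cl(\Omega_i(t))$ and $t\mapsto \cl(\Omega_i(t))\setminus\Omega_i(t)$ are precisely what is needed to control both the interior of the end and its ideal boundary; the second ensures strict convexity of $\partial P_i(t)$ persists under the deformation, because a limit of strictly convex hypersurfaces under Hausdorff convergence remains a hypersurface (rather than collapsing onto the ideal boundary) only if the ideal boundary itself behaves continuously. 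At $t=0$ the projective diffeomorphism $P_i(0)\cong B_i$ identifies this data with the given end of $M$.

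Next I would apply the Convex Extension Theorem to the basepoint given by the properly convex structure on $M$. Since that theorem asserts that the map from (representations, end data) to properly convex structures on the underlying manifold is open near any point in its image, and our starting point $(\rho_0,\{\Omega_i(0)\})$ lies in the image, a small neighborhood of end data is again realized by properly convex structures on $M$. For $|t|<\epsilon$ small enough, the path $(\rho_t,\{\Omega_i(t)\})$ lies in this neighborhood, yielding for each such $t$ a properly convex projective structure on $M$ with holonomy $\rho_t$, strictly convex $\partial M$, and with each end projectively diffeomorphic to $P_i(t)$ by construction.

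I expect the main obstacle to be purely bookkeeping: namely, checking that the two Hausdorff continuity hypotheses match exactly the continuity required on end data by \ref{kleinianedndatamapopen}, and that the identification $B_i\cong P_i(0)$ fits with the developing map identifying the end of the deformed structure with $P_i(t)$. A secondary subtlety is that the convex set $\Omega_i(t)$ need not be the component of $\RPn\setminus H$ containing the developed image until one restricts attention to the thickened end; handling the potential ambiguity in the choice of $\Omega_i(t)$ (for instance, when $\rho_t(\pi_1B_i)$ preserves more than one convex set) may require appealing to the generalized cusp structure established earlier, and ensuring the deformation keeps us in the same component of the parameter space.
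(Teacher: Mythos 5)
Your proposal is correct and follows essentially the same route as the paper: the hypotheses define a continuous path $\gamma(t)=(\rho_t,(\Omega_i(t),\rho_t|_{\pi_1B_i}))$ in the Kleinian relative-holonomy space (whose topology is exactly the Hausdorff boundary topology encoded by your two continuity hypotheses), and openness of $\enddatamap_{\Kleinian}$ from (\ref{kleinianedndatamapopen}) makes its image an open set containing $\gamma(0)$, so $\gamma(-\epsilon,\epsilon)$ lies in the image and each $\gamma(t)$ pulls back to a structure in $\devGX_{ce}(M,\proj)$ with the stated properties. The only cosmetic slip is the direction of the map (it goes from developing maps to end data, and openness of that map is what makes its image open), but the conclusion you draw from it is the right one.
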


{\edit 
 If $\Omega\subset\RP^n$ we write $\PGL(\Omega)$ for the subgroup of $\PGL(n+1,\RR)$  that preserves $\Omega$}.
Section~\ref{gencusps} proves that generalized cusps contain {\em homogeneous} cusps (\ref{sttdcusp}):

\begin{theorem}\label{homgcuspsthm} Suppose $C=\Omega/\Gamma$ is a generalized cusp. Then $C$ contains a generalized cusp  $C'=\Omega'/\Gamma$ such that $\PGL(\Omega')$ acts transitively
on $\partial\Omega'$.
\end{theorem}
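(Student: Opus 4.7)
The plan is to produce a connected nilpotent Lie group $G\subset\PGL(\Omega)$ of dimension $n-1$ containing a finite-index subgroup of $\Gamma$, and then to cut off a subcusp whose boundary is a single $G$-orbit; homogeneity will then be automatic.

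First I would replace $\Gamma$ by a characteristic torsion-free nilpotent finite-index subgroup $\Gamma_0$. This exists because $\partial C$ is a closed aspherical $(n-1)$-manifold with virtually nilpotent fundamental group, hence is an infranilmanifold, so $\Gamma_0$ has Hirsch length $n-1$. Let $G$ denote the identity component of the real Zariski closure of $\Gamma_0$ in $\PGL(n+1,\RR)$. By Mal'cev's theorem $G$ is a connected, simply-connected nilpotent real algebraic group of dimension $n-1$, containing $\Gamma_0$ as a Zariski-dense cocompact lattice. A technical preliminary is to verify that $G$ preserves $\Omega$ (rather than merely some $\Gamma_0$-invariant convex set containing it); this should follow from the rigidity of the cusp structure---the virtually nilpotent holonomy and strict convexity of $\partial C$ force $\Omega$ to be characterised algebraically by $\Gamma_0$.

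Once $G\subset\PGL(\Omega)$ is established, the $G$-orbits on $\Omega$ foliate $\Omega$ by $(n-1)$-dimensional leaves parametrised by $[0,\infty)$: cocompactness of $\Gamma_0$ in $G$ combined with the product structure $C\cong\partial C\times[0,\infty)$ forces each orbit to be a lift of a level slice $\partial C\times\{t\}$ and hence diffeomorphic to $\RR^{n-1}$. To single out one of these orbits as $\partial\Omega'$, I would construct a $G$-invariant strictly convex exhaustion function $\psi\colon\Omega\to\RR$ from the characteristic function $\phi_\Omega$ of Section~\ref{charfnssection}. Since $\phi_\Omega\circ g = |\det g|^{-1}\phi_\Omega$ for $g\in\PGL(\Omega)$, the logarithm transforms under $G$ by an additive character $\chi\colon G\to\RR$. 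Subtracting an affine function (in the Hessian-affine coordinates supplied by Section~\ref{sec:hessian}) that absorbs $\chi$ yields a strictly convex $G$-invariant $\psi$; averaging over the finite quotient $\Gamma/\Gamma_0$ makes $\psi$ also $\Gamma$-invariant without destroying $G$-invariance, because $\Gamma$ normalizes the characteristic subgroup $\Gamma_0$ and hence normalizes $G$. For $a$ sufficiently large, $\Omega':=\{x\in\Omega:\psi(x)\ge a\}$ is then a $\Gamma$-invariant properly convex set with strictly convex $G$-invariant boundary, and $C':=\Omega'/\Gamma$ is a subcusp of $C$.

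Transitivity of $\PGL(\Omega')$ on $\partial\Omega'$ follows from a dimension count: $G\subset\PGL(\Omega')$ has dimension $n-1=\dim\partial\Omega'$; the $G$-stabilizers on $\Omega$ are discrete because the dense subgroup $\Gamma_0$ acts freely; and $\partial\Omega'$ is connected (diffeomorphic to $\partial C$). Hence every $G$-orbit on $\partial\Omega'$ is open, and connectedness forces a single orbit. The main obstacle will be the construction of $\psi$ in the third paragraph: one must arrange the affine correction absorbing $\chi$ so that strict convexity of $\phi_\Omega$ is preserved and so that the resulting level sets remain compact modulo $\Gamma$. A secondary subtlety is the preliminary assertion that the Zariski closure of $\Gamma_0$ is contained in $\PGL(\Omega)$; this fails for generic convex sets but should follow here from the strict convexity of $\partial C$ together with the virtually nilpotent holonomy classifying the local model of the cusp.
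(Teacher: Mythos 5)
There are genuine gaps. The most serious is your identification of the connected hull with the identity component of the Zariski closure of $\Gamma_0$. For a generalized cusp with diagonalizable holonomy whose weights are irrationally related (e.g.\ a cyclic group generated by $\diag(2,3,1/6,1)$ acting on a suitable domain), the Zariski closure has dimension strictly larger than the Hirsch length of $\Gamma_0$, so $G$ is \emph{not} of dimension $n-1$ and $\Gamma_0$ is \emph{not} cocompact in it. Your final dimension count, which needs $\dim G=n-1=\dim\partial\Omega'$, then fails. This is exactly why the paper works with the \emph{e-hull} $T(\Gamma)=\exp\langle\log(\core(\Gamma,n+1))\rangle$ (Definitions~\ref{egrpdef}, \ref{Thulldef}, Theorem~\ref{transgrp}) rather than the Zariski closure: the e-hull is the unique connected group of the correct dimension containing $\Gamma_0$ cocompactly.

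The second gap is the "technical preliminary" that $G$ preserves $\Omega$, which you defer to an unspecified "rigidity" argument. This is false in general and cannot be repaired: there is no reason the connected hull preserves the given $\Gamma$-invariant domain, and the whole content of the theorem is that one must \emph{shrink} $\Omega$ to obtain a $T$-invariant domain. The paper's route (Propositions~\ref{gencuspcontainsRFcusp}, \ref{RFendlemma}, \ref{convexTorbit}) is: pass to the minimal cusp, show it is a radial flow cusp so that $\Omega$ sits as a closed convex set in an affine patch with properly embedded strictly convex boundary, then use cocompactness of $\Gamma$ on $\partial\Omega$ and of $\Gamma_0\backslash T$ to find a point $x$ with $T\cdot x\subset\Omega$, take the closed convex hull of this orbit, pick an \emph{extreme} point $y$ of that hull (homogeneity then propagates strict convexity from $y$ to the whole orbit $T\cdot y$), and set $\Omega'=\CH(T\cdot y)$; finally Lemma~\ref{virthomogcusp} upgrades $\Gamma_0$-invariance to $\Gamma$-invariance. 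Your alternative device --- a $G$-invariant strictly convex exhaustion obtained by correcting $\log\chi_\Omega$ by an affine function absorbing the character $g\mapsto\log|\det g|$ --- is both unnecessary in that scheme and unjustified: such an affine coboundary realizing the character need not exist, and in any case the construction presupposes the very invariance $G\subset\PGL(\Omega)$ that is in question.
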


An algebraic argument  {\edit (\ref{transgrp}) uses $\Gamma$ is virtually nilpotent to show} that if $C=\Omega/\Gamma$ is 
a generalized cusp then $\Gamma$ has a finite index subgroup  that is a lattice
in a connected Lie group $T=T(\Gamma)$ that  is conjugate into the upper-triangular group.

Next (\ref{convexTorbit}) shows that the $T$-orbit of some point $p\in\Omega$  
is a strictly{\edit-}convex hypersurface
$S=T\cdot p$. 
The convex hull of $S$ is a domain $\Omega_T$  that is preserved by
 all of $\Gamma$, and we may shrink
$C$ to be $\Omega_T/\Gamma$ giving (\ref{homgcuspsthm}). 

From (\ref{homgcuspsthm}) it follows that generalized cusps are
{\em stable} (\ref{cuspstab}):
 if $\Gamma$ is deformed to a nearby virtual flag
group $\Gamma'$, then $T'=T(\Gamma')$ is a nearby Lie group, so $S'=T(\Gamma')\cdot p$  
is a nearby strictly{\edit-}convex hypersurface
which gives a nearby domain $\Omega_{T'}$ and a nearby 
generalized cusp $C'=\Omega_{T'}/\Gamma'$.  

The convex extension theorem and  the 
stability of  generalized 
cusps
  imply the main theorem (\ref{deformmfd}). Ballas, Cooper and Leitner \cite{BCL} 
 classify generalized cusps, and their properties are studied. 
This classification for 3-manifolds is given without proof in section \ref{3mfd}.

A function is {\em Hessian-convex} if it is smooth and has positive definite Hessian. This property is preserved
by composition with diffeomorphisms that are close to affine. 
Section \ref{smoothing}  {\edit reviews} various types of convexity and contains a theorem about approximating strictly-convex functions 
on affine manifolds by Hessian-convex ones.
 As a result, for our purposes {\em strictly} and {\em Hessian} convex are more or less equivalent.
Section~\ref{sec:Benzecri} is a short proof of Benz{\'e}cri's Theorem. We have put these results at the end of the paper with the intention of not breaking the narrative.

There is an entirely PL approach to (\ref{deformmfd})  which, however, we do not develop in this paper.
It is based on using the convex hull of the orbit of one point instead of a characteristic surface.

Theorem (\ref{deformmfd}) does not always remain true if $\partial M$
is convex but not strictly{\edit-}convex. However, in some cases, the theorem can still be applied. 
For instance, a hyperbolic manifold $M$ with compact, totally geodesic boundary is a
submanifold of a finite volume hyperbolic manifold with strictly{\edit-}convex
smooth boundary obtained by fattening.
 In particular, any small deformation in $\PGL(4,\R)$ of the holonomy in $PO(3,1)$ of a
compact Fuchsian manifold is the holonomy of a strictly{\edit-}convex  projective structure
on (surface)$\times[0,1].$

The reader only interested in the proof of (\ref{deformmfd}) when $M$ is compact need only read
section \ref{sec:deforming (G,X)} up to (\ref{deformGX}), and
then sections \ref{tautbundlesec}
to \ref{charfnssection}
stopping before (\ref{strongtopdef}). Those interested only in the proof of (\ref{extendpc}) can omit
section \ref{gencusps}.

Most of sections 1-4 is not new{\edit,} and there is considerable overlap
 in the first 5 sections with the results and methods  of Choi in \cite{Choi1}.  
Marquis determined the holonomies of properly
convex surfaces with cusps  \cite{MXY} and \cite{Marq1}.
In \cite{CL} Cooper and Long give 
a method of constructing fundamental domains for some strictly{\edit-}convex manifolds with cusps.
Using the main result of this paper, Ballas found
new properly convex structures on the figure
eight knot obtained by deforming the complete hyperbolic structure \cite{SB}. The type of geometry
in a generalized cusp can change during a deformation. For example a generalized
cusp with diagonal holonomy can {\em transition} to one with parabolic holonomy. 
 This is related to the study of geometric transition, see Cooper, Danciger and Wienhard
 \cite{CDW}.

 In section 5 of \cite{CLT1} there is a discussion of properly convex $n$-manifolds with parabolic holonomy.
Such a manifold is diffeomorphic to a product $C\times\RR$ and $\Gamma=\pi_1C$ is virtually nilpotent. The
manifold $C$ is compact iff the Hirsch rank of $\Gamma$ is maximal: namely $(n-1)$. In \cite{CLT1}
these cusps are called {\em maximal}, and it is shown in this case that $\Gamma$ is conjugate into $PO(n,1)$.
In general{\edit,} parabolics are not conjugate into $PO(n,1)$. In this paper 
frequent use is made of the fact that $\partial C$ is compact {\edit is equivalent to}
$H_{n-1}(C;\Ztwo)\cong\Ztwo$. For consistency with  \cite{CLT1}  one might call such generalized cusps {\em maximal}.
However, to keep this paper from becoming even longer,  
these are the only type of generalized cusp we consider, therefore we do not use the term {\em maximal}.

\subsection{Acknowledgments}
{\edit Work partially supported by U.S. National Science Foundation grants DMS 1107452, 1107263, 1107367 {\em RNMS: GEometric structures And Representation varieties} (the GEAR Network). 
Cooper was partially supported by NSF grants DMS 1065939, 1207068 and  1045292
and thanks IAS and the Ellentuck Fund for partial support.
Long was partially supported by grants from the NSF.
Tillmann was partially supported by Australian Research Council grant DP140100158.
We thank the referee for an excellent job.}

 \section{(G,X) structures and Extending Deformations}
 \label{sec:deforming (G,X)}\
 The goal of this section is a relative version 
 of the well-known fact (\ref{deformGX}) that for {\em compact} manifolds the set of holonomies
of $(G,X)$-structures is an open subset of the representation variety. The {\em Extension Theorem}
 (\ref{relopen})  implies that if $\mathcal B$ is a codimension-0 submanifold
of  a connected manifold $M$ with $M\setminus\mathcal B$ compact and  connected, then given a $(G,X)$-structure on  $M$ with holonomy $\rho$,
together with a nearby representation $\sigma$, and given a nearby $(G,X)$-structure
on  each component of $\mathcal B$ with holonomy
the restriction of $\sigma$,  there
is a nearby $(G,X)$-structure on $M$ with holonomy $\sigma$ that extends the structure
on $\mathcal B$. 
 
A {\em geometry} is a pair $(G,X)$ where $G$ is a Lie group which acts transitively and real-analytically on a manifold $X$.
A {\em $(G,X)$-structure}  on a manifold $M$ (possibly with boundary)
 is a maximal atlas of charts which takes values in $X${\edit,} so that transitions maps are locally
 the restriction of elements of $G$. A map between $(G,X)$ manifolds is a {\em $(G,X)$ map} if locally
 it is conjugate via $(G,X)$-charts to an element of $G$.  
 
 Let $\pi:\widetilde{M}\to M$ be (a fixed choice for) the universal cover of $M$.
 We regard $\pi_1M$ to be {\em \edit defined} as the group of covering transformations of this covering.
A local diffeomorphism $f:\widetilde{M}\to X$ determines a $(G,X)$-structure on $\widetilde{M}$.
If  all the covering transformations are $(G,X)$-maps then there is a unique $(G,X)$-structure on $M$
such that the covering space projection is a $(G,X)$-map. In this case $f$ is called a {\em developing map}
for this structure and determines  a homomorphism $\hol=\Hol(f):\pi_1M\to G$ called {\em holonomy}.

 For smooth manifolds $M^m$ and $N^n$ the set of smooth maps $C^{\infty}_w(M,N)$  has the {\em weak topology}
\cite[Hirsch page 35]{Hirsch}. 
The space of diffeomorphisms $\Diff(M)$ is a  subspace of $C^{\infty}_w(M,M)$. If $N=\R$
then $C^{\infty}_w(M):=C^{\infty}_w(M,\R)$.

The  set of all developing maps
is denoted $\devGX(M,(G,X))$ or just $\devGX(M)$. The $(G,X)$-structure
on $M$ given by $\dev\in\devGX(M)$ is written $(M,\dev)$.
There is a   natural map of $\devGX(M)$ into
$C_w^{\infty}(\interior\widetilde M,X)$ given by
restricting the developing map to  $\interior\widetilde M$.  This map is injective because $\interior\widetilde M$
is dense in $\widetilde M$.

\begin{definition}\label{geomtopdef}
The {\em geometric topology}  on $\devGX(M)$ 
 is the subspace topology from $C_w^{\infty}(\interior\widetilde M,X)$. \end{definition}

 Thus two developing maps  for $M$ are close
if they are close on a large compact set in   $\widetilde M$ that is disjoint from $\partial \widetilde M$.
The following is due to Thurston  \cite{WPT}, see also Goldman
 \cite{GOLD} and Choi \cite{CHOIGS}. The topology on $\Hom(\pi_1M,G)$ is the compact-open topology. 
  
\begin{proposition}[holonomy is open]
\label{deformGX} 
Suppose $M$ is a compact connected smooth manifold possibly with boundary.
Then $\Hol:\devGX(M,(G,X))\to \Hom(\pi_1M,G)$ is continuous and open.
\end{proposition}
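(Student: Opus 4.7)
The plan is to prove continuity and openness of $\Hol$ separately. Continuity is essentially formal, while openness is the substantive content, proved by a partition-of-unity-style construction on a sufficiently fine triangulation of $M$.

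\emph{Continuity.} I would fix a nonempty open set $U\subset\interior\widetilde M$ with compact closure so small that $\dev|_U$ embeds into a coordinate chart of $X$. For each $\gamma\in\pi_1M$, both $U$ and $\gamma U$ lie in $\interior\widetilde M$ since covering transformations preserve interior, and the defining relation
\[
\dev\circ \gamma|_U \;=\; \hol(\dev)(\gamma)\cdot \dev|_U
\]
determines $\hol(\dev)(\gamma)\in G$ uniquely, as $G$ acts real-analytically and effectively on $X$. In fact $\hol(\dev)(\gamma)$ can be recovered from the $1$-jet of $\dev$ at a single point of $U$ together with that of $\dev\circ\gamma$ at the corresponding point of $\gamma U$; both depend continuously on $\dev$ in the weak topology, so $\Hol$ is continuous.

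\emph{Openness.} Given $\dev_0$ with $\hol(\dev_0)=\rho_0$ and $\rho_1\in\Hom(\pi_1M,G)$ close to $\rho_0$, I would construct a nearby $\dev_1\in\devGX(M)$ with $\hol(\dev_1)=\rho_1$ in three steps. First, take a smooth triangulation of $M$ fine enough that each closed simplex is contained in an evenly-covered $(G,X)$-chart, and lift to a $\pi_1M$-equivariant triangulation $\widetilde T$ of $\widetilde M$; fix a finite set $\Sigma$ of closed simplices forming a fundamental domain, together with a slight thickening $\Sigma'$ that still projects to $M$ with uniformly bounded overlap multiplicity. Second, on the vertices of $\Sigma$ keep $\dev_1=\dev_0$, and extend to all vertices of $\widetilde T$ by the equivariance rule $\dev_1(\gamma v)=\rho_1(\gamma)\,\dev_0(v)$. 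Third, extend inductively over higher skeleta: each closed simplex $\sigma\in\Sigma$ has $\dev_0|_\sigma$ embedded in a chart of $X$, and for $\rho_1$ close to $\rho_0$ the boundary data prescribed by the previous step are $C^\infty$-close to $\dev_0|_{\partial\sigma}$, so one interpolates smoothly within the chart and extends equivariantly via $\rho_1$. A smooth $\pi_1M$-equivariant partition of unity subordinate to the interiors of the $\pi_1M$-translates of $\Sigma'$ smooths the result across faces.

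The main obstacle is showing that $\dev_1$ is a local diffeomorphism, not merely a smooth $\rho_1$-equivariant map. Since $\dev_0$ is a local diffeomorphism and $\Sigma'$ is compact, the Jacobian of $\dev_0$ in local coordinates is uniformly bounded below in absolute value on $\Sigma'$. The construction makes $\dev_1$ arbitrarily $C^1$-close to $\dev_0$ on $\Sigma'$ once $\rho_1$ is sufficiently close to $\rho_0$, and being a local diffeomorphism is an open condition in the weak $C^1$ topology, so $\dev_1|_{\Sigma'}$ remains a local diffeomorphism. By $\rho_1$-equivariance, this property propagates to all of $\widetilde M$, and so $\dev_1$ is a genuine developing map with $\hol(\dev_1)=\rho_1$, close to $\dev_0$ in the geometric topology, which gives openness.
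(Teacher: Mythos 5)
The paper does not actually prove this proposition; it is quoted from the literature (Thurston, Goldman, Choi), so there is no in-paper argument to compare against. Your proof is the standard Thurston deformation argument from exactly those sources: recover the holonomy from the developing map on a small open set for continuity, and for openness build a $\rho_1$-equivariant perturbation of $\dev_0$ over a compact fundamental domain, then use $C^1$-closeness on that compact set plus equivariance to see the perturbed map is still an immersion, hence a developing map. That is the right strategy, and the local-diffeomorphism step --- the part most often glossed over --- is handled correctly.

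Two points in the openness construction need repair as written. First, the vertex assignment is inconsistent: if $\Sigma$ is a fundamental domain built from closed simplices, then distinct vertices $v$ and $\gamma v$ of $\Sigma$ can lie in the same $\pi_1M$-orbit, and declaring $\dev_1=\dev_0$ on both conflicts with the rule $\dev_1(\gamma v)=\rho_1(\gamma)\dev_0(v)$, since $\rho_1(\gamma)\dev_0(v)\neq\rho_0(\gamma)\dev_0(v)=\dev_0(\gamma v)$ in general. The standard fix is to choose one representative per orbit (of simplices, say), define the perturbed map on representatives, extend by $\rho_1$-equivariance, and accept that the resulting piecewise definitions disagree slightly on overlaps; the disagreement is small when $\rho_1$ is close to $\rho_0$. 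Second, and relatedly, ``a partition of unity smooths the result across faces'' hides the real issue: one cannot convex-combine points of $X$ globally. The blending must be done inside a fixed $(G,X)$-chart containing each simplex (or each overlap), where the competing local definitions are $C^\infty$-close maps into a coordinate ball and can be averaged with a bump function; one must then check the averaged map is still $C^1$-close to $\dev_0$, which holds because the overlap multiplicity is bounded and all competing definitions converge to $\dev_0$ as $\rho_1\to\rho_0$. With those two repairs the argument is complete and agrees with the proofs in the cited references.
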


Given $\dev_M\in\devGX(M)$ and $\dev_N\in\devGX(N)$ 
 a smooth map $f:M\to N$
 is {\em close} to a $(G,X)$ map if it is
covered by $F:\widetilde M\to\widetilde N$ and there is $g\in G$ such that $g\circ\dev_N\circ F$
is close to $\dev_M$ in  $C^{\infty}_w(\widetilde{M},X)$. This means
there is a large compact set $K\subset \interior \widetilde M$  and  some $g\in G$ such that 
 for each
$x\in K$ there is an open neighborhood $U\subset\widetilde{M}$ with $V=\dev_M(U\cap K)$ and the map
$g\circ\dev_N\circ F\circ(\dev_M|_{U\cap K})^{-1}$ is  close to the inclusion map $\edit V\hookrightarrow X$
 in $C^{\infty}_ w(V,X)$.
This notion of {\em close} depends on $\dev_M$ but not on the choice of developing map $\dev_N$
for a given $(G,X)$-structure on $N$.

 There is a nice description of what it means for  two developing maps in $\devGX(M)$ to be close when one of 
them is {\em injective}.
Suppose  $\dev\in\devGX(M)$ is injective and $\Omega=\dev(\widetilde M)$ and
$\rho=\Hol(\dev)$ and $\Gamma=\rho(\pi_1M)$. Then $N=\Omega/\Gamma$
is a $(G,X)$ manifold that is $(G,X)$-diffeomorphic to $M$. We choose
the universal cover $\widetilde{N}=\Omega$
then $\pi_1N=\Gamma$ by our definition as the group of covering transformations.
There is a homeomorphism  {\edit between spaces of developing maps $\devGX(M)\to \devGX(N)$}. 
\begin{definition}\label{GXbasepoint} Replacing $\devGX(M)$ by $\devGX(N)$ 
is called
{\em choosing $\dev$ as the  basepoint for the space of developing maps}.
\end{definition}

The developing map $\dev_*\in\devGX(N)$ for $N$ is
the inclusion map $i:\widetilde N\hookrightarrow X$
and $\Hol(\dev_*):\Gamma\hookrightarrow G$ is also the inclusion map.
If $N$ has no boundary then
$\devGX(N)$ is a subspace of $C^{\infty}_w(\widetilde N,X)$ so
$\dev'\in \devGX(N)$
 is close to $\dev_*$ if $\dev'$ is close to $i$ in $C^{\infty}_w(\widetilde N,X)$. 

 The idea for the extension theorem is the following. 
Suppose $(M=A_1\cup A_2,\dev)$ is a $(G,X)$-manifold with holonomy $\rho$, and 
$C=A_1\cap A_2\cong\partial A_i\times[0,1]$ is a connected collar neighborhood of $\partial A_i$ and the
inclusion map $C\hookrightarrow M$
is $\pi_1$-injective.
Suppose $\rho':\pi_1M\rightarrow G$ is a nearby homomorphism and $(A_i,\dev_i')$ is a nearby
$(G,X)$-structure to $(A_i,\dev|A_i)$ with holonomy $\rho'|\pi_1A_i$.  
If $C$ is compact we show there is a {\em nearby} $(G,X)$-structure on $M$
obtained by gluing $(A_1,\dev_1')$ to $(A_2,\dev_2')$ by a $(G,X)$-diffeomorphism. 
This is done in (\ref{developtrick}) which uses analytic continuation
to map  the submanifold $C\subset (A_1,\dev_1')$ into $(A_2,\dev_2')$ by a $(G,X)$-diffeomorphism.

 \begin{lemma}[lifting developing maps]\label{developtrick} In this statement all manifolds and maps are $(G,X)$.
  Suppose $N$ and $P$ 
 are connected manifolds and $\theta:\pi_1N\to\pi_1P$ is a homomorphism such that $\hol_N=\hol_P\circ\theta$. Suppose 
 $\pi_P:\widetilde{P}\to P$ and $\pi_N:\widetilde{N}\to N$ are universal covers and $i:Q\hookrightarrow \widetilde{N}$
 is the inclusion map of a {\edit path-}connected set $Q$ with $\pi_N(Q)=N$. Suppose $\dev_N\circ i:Q\to X$ lifts to a map $j:Q\to\widetilde{P}$ such that $\dev_P\circ j=\dev_N\circ i$.
 Then there is $k:N\to P$  covered by $\widetilde k:\widetilde{N}\to\widetilde{P}$ that extends $j$.  \end{lemma}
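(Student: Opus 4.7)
Since $\pi_N(Q)=N$ and $\pi_1N$ acts freely on $\widetilde{N}$, we have $\widetilde{N}=\bigcup_{\gamma\in\pi_1N}\gamma\cdot Q$, and every point of $\widetilde{N}$ has the form $\gamma q$ for some $\gamma\in\pi_1N$ and $q\in Q$. The natural prescription is
$$\tilde k(\gamma q):=\theta(\gamma)\cdot j(q),$$
and the plan is to show this is well-defined, continuous, and descends to a $(G,X)$-map on $N$. Using the hypothesis $\hol_N=\hol_P\circ\theta$, one checks immediately that $\dev_P\bigl(\theta(\gamma)j(q)\bigr)=\hol_P(\theta(\gamma))\dev_N(q)=\hol_N(\gamma)\dev_N(q)=\dev_N(\gamma q)$, so every candidate value lies in the correct discrete fiber of $\dev_P$.

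The heart of the argument is to verify the consistency condition
$$j(\delta q)=\theta(\delta)\cdot j(q)\qquad\text{whenever } q,\,\delta q\in Q,\qquad(\ast)$$
since if $\gamma_1q_1=\gamma_2q_2$ with $q_i\in Q$, then setting $\delta:=\gamma_2^{-1}\gamma_1$ gives $q_2=\delta q_1$, and single-valuedness of $\tilde k$ at $\gamma_1 q_1$ is exactly $(\ast)$. I would prove $(\ast)$ by a path-lifting argument: pick a path $\sigma$ in $Q$ from $q$ to $\delta q$, possible because $Q$ is connected. Then $j\circ\sigma$ is a continuous lift of $\dev_N\circ\sigma$ through the local diffeomorphism $\dev_P$ starting at $j(q)$, and by unique path-lifting for local diffeomorphisms it is the only such lift; its endpoint is $j(\delta q)$. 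The point $\theta(\delta)j(q)$ lies in the same discrete fiber $\dev_P^{-1}(\dev_N(\delta q))$, and a monodromy comparison using $\hol_N=\hol_P\circ\theta$ (which identifies the deck transformation $\theta(\delta)$ on $\widetilde{P}$ with the $\hol_N(\delta)$-translate in $X$) forces the two candidate endpoints to coincide.

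Once $(\ast)$ is established, $\tilde k:\widetilde{N}\to\widetilde{P}$ is a well-defined continuous map; by construction it extends $j$, satisfies $\dev_P\circ\tilde k=\dev_N$, and obeys the equivariance $\tilde k(\eta x)=\theta(\eta)\tilde k(x)$ for every $\eta\in\pi_1N$. The equivariance lets $\tilde k$ descend to $k:N\to P$ with $\pi_P\circ\tilde k=k\circ\pi_N$, and the identity $\dev_P\circ\tilde k=\dev_N$ makes both $\tilde k$ and $k$ into $(G,X)$-maps.

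I expect $(\ast)$ to be the only real obstacle. The pre- and post-$(\ast)$ steps are routine bookkeeping, but $(\ast)$ is where the local-diffeomorphism structure of $\dev_P$ has to be promoted into a globally equivariant statement on all of $Q$; it is also the one place where the hypothesis $\hol_N=\hol_P\circ\theta$ is used essentially rather than just for setup.
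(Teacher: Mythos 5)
Your reduction of the problem to the consistency condition $(\ast)$ is the right way to organize the construction, and it is essentially what the paper's one-line ``analytic continuation'' proof is silently doing. The gap is in your argument for $(\ast)$ itself. The path-lifting step only establishes the tautology that the unique lift of $\dev_N\circ\sigma$ through $\dev_P$ starting at $j(q)$ ends at $j(\delta q)$; all of the content is then placed on the final ``monodromy comparison,'' and that is not an argument. The map $\dev_P$ is merely a local diffeomorphism, not a covering map: its fibers are discrete but carry no monodromy or deck-transformation action, so knowing that $j(\delta q)$ and $\theta(\delta)j(q)$ both lie in $\dev_P^{-1}(\dev_N(\delta q))$ gives no reason for them to coincide. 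Uniqueness of lifts would let you compare the two maps $x\mapsto j(\delta x)$ and $x\mapsto \theta(\delta)j(x)$ on $\{x\in Q:\delta x\in Q\}$, but that set need not be connected and, more importantly, you have no point at which the two maps are known to agree, so there is nothing to propagate. (A smaller issue: a connected set need not be path-connected, so ``pick a path $\sigma$ in $Q$'' already needs justification for general $Q$.)

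In fact $(\ast)$ is false in general, so no repair of that step is possible. Take $P=N=\RR^2\setminus\{2\text{ points}\}$ with its translation structure (so $G=\RR^2$ acting on $X=\RR^2$ and $\hol_N$ is trivial), $\theta=\mathrm{id}$, $Q$ the closure of a fundamental domain, and $j=g\circ i$ for a deck transformation $g$ that is not central in the free group $\pi_1N$. Then $\dev_P\circ j=\hol_N(g)\cdot(\dev_N\circ i)=\dev_N\circ i$, so the hypotheses hold, yet for $\delta$ not commuting with $g$ and $q,\delta q\in Q$ one gets $j(\delta q)=g\delta q\neq \delta g q=\theta(\delta)j(q)$, so your $\tilde k$ is not single-valued. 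The lemma is nevertheless true here: $\tilde k=g$ extends $j$ and covers $\mathrm{id}_N$, but it is equivariant for the conjugated homomorphism $\delta\mapsto g\delta g^{-1}$ rather than for $\theta$. This is the key point your construction misses: the conclusion only asks for some $k$ covered by some $\tilde k$ extending $j$, whereas the formula $\tilde k(\gamma q)=\theta(\gamma)j(q)$ forces $\theta$-equivariance, which is a strictly stronger and sometimes unattainable property. A correct proof must continue the germ of $j$ across the translates $\gamma Q$ while allowing the element of $\pi_1P$ relating successive determinations to be discovered along the way (and checked consistent), rather than decreeing in advance that it is $\theta(\gamma)$.
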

  \begin{figure}[ht]	 
\begin{center}
		 \includegraphics[scale=1]{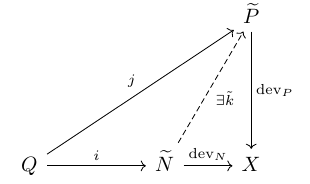}
\end{center}
\end{figure}
\begin{proof} Because the covering translates of $Q$ cover $\widetilde{N}$ and the
  manifolds $N$ and $P$ have (via $\theta$) the same holonomy, $j$
can be extended by analytic continuation to an equivariant $(G,X)$-map $\widetilde k:\widetilde{N}\to \widetilde{P}$. Equivariance implies
 $\widetilde k$ covers  a $(G,X)$-map
$k:N\to P$. \end{proof}

 For a {\em closed} manifold $M$ two nearby developing maps with the same holonomy differ
by composition with a diffeomorphism of $M$ close to the identity. If $M$ has boundary then the boundary can move,
so this result only holds outside a small neighborhood of the boundary, cf. the discussion  in I.1.5 and I.1.6 leading to the proof  of I.1.7.1 in Canary, Epstein and Green \cite{CEG}.

 If $P$ is a smooth manifold then $\Diff_ 0(\widetilde{P},{\edit P})\subset\Diff(\widetilde P)$
 is {\edit defined to be} the  identity component of the subgroup of diffeomorphisms that cover an element of $\Diff(P)$. 
   The next result says that if two developing maps are close, and have the same holonomy then, after changing
   one by a small isotopy, the developing maps are equal on a compact submanifold in the interior.
    
\begin{corollary}\label{isotopsubstructure}  Suppose $P$ is a smooth manifold.
Let $\rho\in\Hom(\pi_1 P,G)$
be the holonomy of $\dev\in\devGX(P,(G,X))$ and $\devGX_{\rho}(P)\subset\devGX(P)$  be the subspace
of developing maps with holonomy $\rho$. Then the map $\Diff_ 0(\widetilde{P},{\edit P})\to\devGX_{\rho}(P)$
given by $f\mapsto\dev\circ f$ is an open map.

It follows that, if $N$ is a compact codimension-0 manifold  in the interior of $P$, and $\dev'\in\devGX_{\rho}(P)$
is close enough to $\dev$, then there is $k\in\Diff(P)$ covered by $\widetilde{k}\in\Diff_ 0(\widetilde{P},{\edit P})$ such that
$\dev=\dev'\circ \widetilde k$ on $N${\edit,} and $k$ is isotopic to the identity
by a small isotopy supported in a small neighborhood of $N$.
 \end{corollary}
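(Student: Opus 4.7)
The plan is to prove both parts from a single core construction: from two developing maps $\dev,\dev'\in\devGX_\rho(P)$ that are $C^\infty$-close on a sufficiently large compact set, extract a $\pi_1P$-equivariant near-identity smooth map of $\widetilde P$ whose composition with $\dev$ equals $\dev'$. Using Definition \ref{GXbasepoint} to shift the basepoint, I reduce to proving openness at $f_0=\mathrm{id}$, i.e.\ showing that for every neighborhood $\mathcal V$ of $\mathrm{id}\in\Diff(\widetilde P,\pi_1P)$ there is a neighborhood of $\dev$ in $\devGX_\rho(P)$ whose elements factor as $\dev\circ f$ with $f\in\mathcal V$. Locally, for $x$ in a compact set $K\subset\widetilde P$ whose $\pi_1P$-orbit covers $\widetilde P$, the inverse function theorem gives a unique $j(x)$ near $x$ with $\dev(j(x))=\dev'(x)$, provided $\dev'$ is $C^1$-close to $\dev$ on $K$. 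Because $\dev$ and $\dev'$ both have holonomy $\rho$, the points $\gamma j(x)$ and $j(\gamma x)$ both lie near $\gamma x$ and both solve $\dev(\cdot)=\dev(\gamma x)$, so they coincide by local uniqueness; hence $j$ is $\pi_1P$-equivariant where defined.

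Applying Lemma \ref{developtrick} with $N:=P$, $\theta:=\mathrm{id}$, and $Q$ a connected neighborhood of $K$ extends $j$ by analytic continuation to a global equivariant $(G,X)$-map $\tilde f:\widetilde P\to\widetilde P$ satisfying $\dev\circ\tilde f=\dev'$ and covering a map $f_P:P\to P$. Fix a $\pi_1P$-invariant Riemannian metric on $\widetilde P$ lifted from one on $P$: closeness of $\tilde f$ to the identity on $K$ then yields closeness on every compact subset of $\widetilde P$ via equivariance, and a sufficiently $C^\infty$-close-to-identity smooth map is a diffeomorphism. Hence $\tilde f\in\Diff(\widetilde P,\pi_1P)$, and it lies in any prescribed neighborhood $\mathcal V$ of $\mathrm{id}$ once the neighborhood in $\devGX_\rho(P)$ is shrunk sufficiently. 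This establishes the openness statement.

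For the deduction about $N$, apply the above to obtain $\tilde f\in\Diff(\widetilde P,\pi_1P)$ close to $\mathrm{id}$ with $\dev\circ\tilde f=\dev'$, and set $\tilde k_0:=\tilde f^{-1}$, so that $\dev=\dev'\circ\tilde k_0$ globally. This $\tilde k_0$ is close to the identity everywhere but not supported near $N$, so I interpolate it with $\mathrm{id}$ via a bump function. For $\tilde k_0$ close enough to the identity in the invariant Riemannian metric, write $\tilde k_0(x)=\exp_x(\xi(x))$ for a small $\pi_1P$-equivariant vector field $\xi$. Pick a small open neighborhood $U$ of $N$ in $P$ with $\overline U$ compact and a smooth $\pi_1P$-invariant bump function $\tilde\psi:\widetilde P\to[0,1]$ lifted from $\psi:P\to[0,1]$ equal to $1$ on $N$ and $0$ outside $U$, and set
\[
\tilde k(x):=\exp_x\bigl(\tilde\psi(x)\xi(x)\bigr).
\]
Then $\tilde k=\tilde k_0$ on $\pi_P^{-1}(N)$, so $\dev=\dev'\circ\tilde k$ on $N$; $\tilde k=\mathrm{id}$ outside $\pi_P^{-1}(U)$; and $\tilde k$ is $\pi_1P$-equivariant by invariance of $\tilde\psi$ and equivariance of $\xi$, hence covers a diffeomorphism $k:P\to P$ supported in $U$. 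For $\xi$ small enough $\tilde k$ is a diffeomorphism, and the family $\tilde k_s(x):=\exp_x\bigl(s\tilde\psi(x)\xi(x)\bigr)$ for $s\in[0,1]$ furnishes an isotopy from $\mathrm{id}$ to $\tilde k$ supported in $\pi_P^{-1}(U)$, descending to the required small isotopy of $k$ to the identity.

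The main technical obstacle is promoting the local construction of $j$ to a global diffeomorphism $\tilde f$: one must verify that the analytic continuation produced by Lemma \ref{developtrick} yields a globally injective map, not merely a local diffeomorphism. Simple-connectedness of $\widetilde P$ ensures the continuation is well defined, while the $\pi_1P$-invariant metric lets closeness-to-identity on one fundamental region propagate to all of $\widetilde P$; openness of $\Diff(\widetilde P)$ in the $C^\infty_w$ topology then forces $\tilde f$ to be a diffeomorphism. A secondary technical point, that the bump-function modification preserves equivariance and the diffeomorphism property, is handled by pulling back both the metric and the bump function from $P$ so that all ingredients descend.
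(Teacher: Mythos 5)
There is a genuine gap in the globalization step. Your construction of $\tilde f$ starts from ``a compact set $K\subset\widetilde P$ whose $\pi_1P$-orbit covers $\widetilde P$'' on which $\dev'$ is $C^1$-close to $\dev$. Such a $K$ exists only when $P$ is compact, and even then the geometric topology (Definition \ref{geomtopdef}) only controls $\dev'$ on compact subsets of $\interior\widetilde P$, so $K$ cannot meet $\partial\widetilde P$. For non-compact $P$ the global factorization you are after can simply fail: take $P=\RR$ with the affine geometry, $\dev=\mathrm{id}$, and let $\dev'$ be a local diffeomorphism equal to the identity on $[-R,R]$ but with bounded image; this $\dev'$ lies in every weak neighborhood of $\dev$ as $R\to\infty$, yet is not of the form $\dev\circ f$ for any $f\in\Diff(\RR)$. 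Since your proof of the second (and, for the paper, essential) statement is obtained by inverting the global $\tilde f$ and then cutting off, the gap propagates to it. A secondary slip: $\Diff(\widetilde P)$ is \emph{not} open in $C^{\infty}_w(\widetilde P,\widetilde P)$ when $\widetilde P$ is non-compact (Hirsch's openness is for the strong topology), though your actual equivariance-plus-fundamental-domain argument sidesteps this when $P$ is closed.

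The repair is to localize from the outset, which is what the paper does: choose a compact connected $Q\subset\widetilde P$ in the interior whose projection covers a compact neighborhood of $N$, solve $\dev(j(x))=\dev'(x)$ there by the inverse function theorem exactly as you do, and extend by (\ref{developtrick}) only to a $(G,X)$-embedding of that neighborhood into $P$ close to the inclusion — never to a self-diffeomorphism of all of $\widetilde P$. From that point your exponential-map cutoff $\tilde k(x)=\exp_x(\tilde\psi(x)\xi(x))$ is a correct and welcome explicit substitute for the paper's citation of the ambient isotopy theorem for embeddings near the inclusion; applied to the locally defined inverse (which exists on $\pi_P^{-1}(\overline U)$ once the embedding is close enough to the inclusion on a slightly larger set), it yields the required $k$ supported near $N$ with $\dev=\dev'\circ\tilde k$ on $N$.
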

 
 \begin{proof} Let $\pi_P:\widetilde{P}\to P$ and $\pi_N:\widetilde{N}\to N$ be  universal covers.
 Let $Q\subset\widetilde{N}$ be a compact connected manifold such that $\pi_N(Q)=N$. 
 Since $\dev|_Q:Q\to X$ factors through the inclusion $j:Q\hookrightarrow\widetilde{P}$ and 
 $ (\pi_p\circ j)(Q)\subset\interior(P)$ it follows that if
 $\dev'$ is close enough to $\dev$ then $\dev'|Q:Q\to X$ has a nearby lift $j':Q\to \widetilde{P}$.
 By (\ref{developtrick}) 
there is a $(G,X)$-map
$k:(N,\dev'|N)\to (P,\dev|N)$ that lifts to a map that extends $j'$. If $\dev'$ is sufficiently close to $\dev$,
 then $k$ is close to the {\edit inclusion}, and the result now follows from the  fact that a diffeomorphism 
 (namely $k$) close to an inclusion
is ambient isotopic to the inclusion by a small ambient isotopy, Lima \cite{LIMA}.
\end{proof}

 Given a connected $\pi_1$-injective submanifold $B\subset M$ we fix a choice of some component $\widetilde{B}\subset\widetilde{M}$
of the preimage $B$ in 
the universal cover of $M$ and {\edit identify} $\pi_1B$ {\edit with} those covering transformations 
{\edit of $\widetilde M$} that preserve $\widetilde{B}$.
If $\dev_M$ is a developing map for a $(G,X)$-structure on $M$  
the {\em restriction to $B$ is $\dev_{M|B}:=\dev_M|\widetilde{B}$.}

Suppose $M$ is a smooth manifold with (possibly empty) boundary and
$\mathcal B\subset M$ is a codimension-0 submanifold that is a closed subset such
 that $A=\cl(M\setminus \mathcal B)$ is compact
manifold.
Suppose
$\mathcal B=B_1\sqcup\cdots B_k$ has $k<\infty$ connected components  and each component is $\pi_1$-injective.
Define the {\em relative-holonomy space}
$$\enddata(M,\mathcal B,(G,X))\subset  \Hom(\pi_1M,G)\times\prod_{i=1}^k \devGX(B_i,(G,X))$$ to be the subset of all
 $(\rho,\dev_1,\cdots,\dev_k)$ such that $\Hol(\dev_i)=\rho|\pi_1B_i$. This space
 has the subspace topology of the product topology.

\begin{definition}\label{defrelhol}
A developing map for $M$ restricts to give developing maps on each component
of $\mathcal B$  and this defines the {\em relative holonomy map} $\enddatamap:\devGX(M,(G,X))\longrightarrow \enddata(M,\mathcal B,(G,X))$
$$ \enddatamap(\dev_M)=(\Hol(\dev_M),\dev_{M|B_1},\cdots,\dev_{M|B_k})$$
\end{definition}
This map depends on a fixed choice of one component $\widetilde{B}_i\subset\widetilde{M}$ for each $i$.
In the special case that  $\mathcal B$ is empty then $\enddatamap=\Hol$.
We will apply this when $\mathcal B$ consists of the ends of $M$ which is why the symbol $\enddatamap$ is
used. However the result is of interest even when everything is compact.

\begin{theorem}[Extension theorem]
\label{relopen} Suppose $M$ is a smooth manifold with (possibly empty) boundary and
$\mathcal B\subset M$ is a  $\pi_1$-injective codimension-0 submanifold that is a closed subset such that 
$A=\cl(M\setminus \mathcal B)$ is a compact  connected manifold.
 Then
 $\enddatamap:\devGX(M,(G,X))\longrightarrow \enddata(M,\mathcal B,(G,X))$ is continuous and open.
\end{theorem}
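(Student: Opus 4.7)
For continuity: the map $\enddatamap$ sends $\dev_M$ to $(\Hol(\dev_M), \dev_M|_{\widetilde{B_1}}, \ldots, \dev_M|_{\widetilde{B_k}})$. The holonomy component is continuous (cf.~Proposition~\ref{deformGX}), and each restriction $\dev_M \mapsto \dev_M|_{\widetilde{B_i}}$ is continuous because the geometric topology on $\devGX$ is the subspace topology from $C^\infty_w(\interior\widetilde M, X)$, in which restriction to an open subset is continuous. Hence $\enddatamap$ is continuous.

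For openness, fix $\dev_M$ and a target $(\rho, \dev_1, \ldots, \dev_k) \in \enddata$ close to $\enddatamap(\dev_M)$; the plan is to build $\dev_M'$ close to $\dev_M$ with $\enddatamap(\dev_M') = (\rho, \dev_1, \ldots, \dev_k)$ in three steps. First, since each $\dev_i$ is a real-analytic local diffeomorphism close to $\dev_M|_{\widetilde{B_i}}$, it extends uniquely by analytic continuation across $\widetilde{\partial B_i}$ to a $\rho|_{\pi_1 B_i}$-equivariant $(G,X)$-map $\widehat{\dev}_i$ on a neighborhood $\widetilde{B_i^+}$ of $\widetilde{B_i}$ in $\widetilde M$; closeness of $\dev_i$ to $\dev_M|_{\widetilde{B_i}}$, together with the fact that $\dev_M$ is already defined past the boundary on all of $\widetilde M$, ensures $\widehat{\dev}_i$ exists on a neighborhood $\widetilde{B_i^+}$ uniform in the target data and is close to $\dev_M|_{\widetilde{B_i^+}}$. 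Second, choose collars $C_i \subset B_i$ of $\partial B_i$ and set $A^+ = A \cup \bigcup_i C_i$, a compact submanifold of $M$ with $\partial A^+ \subset \interior\mathcal B$; apply Proposition~\ref{deformGX} to $A^+$ to obtain $\dev^+ \in \devGX(A^+)$ with holonomy $\rho|_{\pi_1 A^+}$, close to $\dev_M|_{A^+}$. On the overlap $\widetilde{A^+} \cap \widetilde{B_i^+}$, both $\dev^+$ and $\widehat{\dev}_i$ are close developing maps with the same holonomy, so by the first part of Corollary~\ref{isotopsubstructure} there is an equivariant diffeomorphism $\tilde m_i$ close to the identity with $\dev^+ \circ \tilde m_i = \widehat{\dev}_i$ on that overlap. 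Third, using a $\pi_1$-invariant cutoff function on $A$ supported in a collar of $\partial B_i$ together with an equivariant exponential map, interpolate $\tilde m_i$ to the identity as one moves deeper into $\widetilde A$; this produces an equivariant diffeomorphism $\tilde M_i$ of $\widetilde{A^+}$ equal to $\tilde m_i$ near $\widetilde{\partial B_i}$ and to the identity farther in. Define $\dev_M'$ on $\widetilde M$ to equal $\dev^+ \circ \tilde M_i$ on $\widetilde{A^+}$ and $\dev_i$ on $\widetilde{B_i}$, extended equivariantly under $\rho$. The two pieces agree on a neighborhood of $\widetilde{\partial B_i}$ because both equal $\widehat{\dev}_i$ there, so $\dev_M'$ is a well-defined smooth developing map with holonomy $\rho$ and $\dev_M'|_{\widetilde{B_i}} = \dev_i$ exactly.

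The main technical obstacle is the smooth gluing at each $\widetilde{\partial B_i}$, which hinges on the analytic continuation $\widehat{\dev}_i$ serving as a common bridge between the $A$-piece and the $B_i$-piece. Producing $\widehat{\dev}_i$ on a uniformly large neighborhood of $\widetilde{\partial B_i}$ requires the collar $\widetilde{B_i^+}\setminus\widetilde{B_i}$ to be chosen in advance using the reference $\dev_M$ (which is already a developing map on all of $\widetilde M$), so that every $\dev_i$ sufficiently close to $\dev_M|_{\widetilde{B_i}}$ in $C^\infty_w$ extends on the same neighborhood. The equivariant cutoff of $\tilde m_i$ to the identity is possible because $\tilde m_i$ is $C^\infty$-close to the identity (this imposes a smallness on the target neighborhood in $\enddata$) and because $\pi_1$-invariant cutoffs and equivariant exponential maps exist on the collar. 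Closeness of $\dev_M'$ to $\dev_M$ then follows because each ingredient ($\dev^+$, $\widehat{\dev}_i$, $\tilde M_i$) is a small perturbation of its reference; compatibility with the $\pi_1 M$-action throughout ensures $\dev_M'$ is genuinely $\rho$-equivariant.
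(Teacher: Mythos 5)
Your overall strategy is the one the paper uses: continuity is immediate, and openness is obtained by deforming the structure on a compact neighborhood of $A$ via (\ref{deformGX}), matching the result with the prescribed end structures on an overlap collar via (\ref{isotopsubstructure}), and gluing. The step that is genuinely yours --- and where the argument breaks --- is the ``unique analytic continuation'' of $\dev_i$ across $\widetilde{\partial B_i}$ to a map $\widehat{\dev}_i$ on a neighborhood $\widetilde{B_i^+}$ of $\widetilde{B_i}$ in $\widetilde M$ that is close to $\dev_M$ there. First, there is no ambient structure along which to continue: the $(G,X)$-structure (and hence the real-analytic structure) determined by $\dev_i$ lives only on $B_i$, and a developing map of a manifold with boundary does not canonically extend past its boundary into $\widetilde M$; one can extend the boundary charts into an abstract external collar, but identifying that collar with a subset of $\widetilde M$ is a choice, not a continuation of $\dev_M$. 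Second, and decisively, the geometric topology on $\devGX(B_i)$ is the subspace topology from $C^\infty_w(\interior\widetilde{B_i},X)$, so ``$\dev_i$ is close to $\dev_M|_{\widetilde{B_i}}$'' carries no information about $\dev_i$ on any neighborhood of $\widetilde{\partial B_i}$: every neighborhood of $\dev_M|_{\widetilde{B_i}}$ contains developing maps that differ arbitrarily from it on a thin collar of $\widetilde{\partial B_i}$. Hence neither the existence of $\widehat{\dev}_i$ on a collar fixed in advance, nor its closeness to $\dev_M$ (equivalently to $\dev^+$) on the overlap $\widetilde{A^+}\cap\widetilde{B_i^+}$, follows from your hypotheses; and without that closeness (\ref{isotopsubstructure}) does not supply the matching diffeomorphism $\tilde m_i$ on a region containing $\widetilde{\partial B_i}$, which your gluing requires.

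The paper routes around this by performing the comparison entirely inside $\interior B$: it takes a compact neighborhood $C$ of $A$ with $C\cap B\cong\partial B\times[0,2]$, deforms the structure on the compact manifold $C$, and matches $\dev_{\sigma,C}$ with the prescribed $\dev_{\sigma,B}$ on a sub-collar of $\partial B\times[0,2]$, where the geometric topology on $\devGX(B)$ does control $\dev_{\sigma,B}$. You should do the same: put the overlap at $\partial B_i\times[1,2]\subset\interior B_i$, compare $\dev^+$ directly with $\dev_i$ there (no continuation needed), and glue. Note that this only forces the glued developing map to agree with $\dev_i$ on $\partial B_i\times[1,\infty)$ and not on the sliver $\partial B_i\times[0,1)$; literal equality of the restriction with $\dev_i$ all the way up to $\widetilde{\partial B_i}$ is exactly what your continuation step was trying, and fails, to secure, and the paper's own brief proof passes over the same point. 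As the topologies are defined --- with the $\devGX(B_i)$ factors of $\enddata$ blind near $\widetilde{\partial B_i}$ while $\devGX(M)$ is not --- the honest output of either argument is a developing map on $M$ whose restriction agrees with $\dev_i$ away from $\widetilde{\partial B_i}$, which is what the later applications actually use.
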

\begin{proof} Continuity is easy. We prove openness. 
For simplicity we will assume that ${\mathcal B}=B$
 is connected; the multi-end case merely requires more notation. 
 Suppose 
 $\enddatamap(\dev_{\rho,M})=(\rho,\dev_{\rho,M|B})$
 and $(\sigma,\dev_{\sigma,B})$ is nearby in $\enddata(M,\mathcal B,(G,X))$. 
 
  Let $E\subset B$ be a compact collar of $\partial B$ and
$C=A\cup E$.
 By (\ref{deformGX}) there is $\dev_{\sigma,C}:\widetilde C\to X$ close to $\dev_{\rho,M|C}$ 
 with holonomy (the restriction of) $\sigma$.
   Using (\ref{isotopsubstructure})  to change $\dev_{\sigma,E}$ by a small isotopy, we may assume
   $\dev_{\sigma,C}$ and $\dev_{\sigma,B}$ are  equal on a smaller collar $E^-\subset E$. This gives a developing
   map $\dev_{\sigma,M}:\widetilde{M}\to X$ close to $\dev_{\rho,M}$ that is
    given by $\dev_{\sigma,C}$ on $\widetilde A$ and $\dev_{\sigma,B}$ on 
   $\widetilde B$.
 \end{proof}

\section{Tautological  Bundles}\label{tautbundlesec}
There is a bundle $\xi M\to M$ over a real projective manifold $M$ called the tautological line bundle.
In the next section we show that $M$ is properly convex iff $\xi M$ admits a certain kind of metric.

{\em Radiant affine geometry}  is $\RA=(\GL(n+1,\RR),\RR^{n+1}\setminus 0)$.
 A manifold with this
structure is called a {\em radiant affine manifold}.
 It {\em ought} to be called a {\em linear manifold} since
transition functions are linear maps. 

{\em Projective geometry} over a real vector space $V$ 
 is $\PP=(\PGL(V),\PP(V))$ where $\PP(V)=(V-0)/\RR^*$. 
{\em Positive projective space}  is $\PPp(V)=(V-0)/\RR_{\edit+}$ and
 the action of $\GL(V)$ on $V$ induces an effective action of
$\PGLp(V)=\GL(V)/\RR_{\edit+}$ on 
$\PP_+(V)$ which gives
{\em positive projective geometry}  $\PPp=(\PGLp(V),\PP_+(V))$.
If $X\subset V$ we write
$\PP(X)$ for its image in $\PP(V)$ and similarly $\PPp(X)\subset\PPp(V)$.

We identify $\PP_+(\RR^{n+1})$ with the unit sphere $\SS^n\subset\RR^{n+1}$
and {\em radial projection} $\pi_{\xi}:\RR^{n+1}\setminus 0\to \SS^n$
is $\pi_{\xi}(x)=x/\|x\|$. An action of $A\in\SL_{\pm}(n+1,\RR)$  on $\SS^n$ is given by $A(\pi_{\edit \xi} x)=\pi_{\edit\xi}(Ax)$.
Clearly $\PPp\cong \SS:=(\SL_{\pm}(n+1,\RR),\SS^n)$.

For each of the geometries $\mathbb G$ above there is a space of
developing maps $\devGX(M,\mathbb G)$ with the geometric topology.
By lifting developing maps one obtains:

\begin{proposition}\label{holonomylifts} The natural map $\devGX(M,\SS)\to\devGX(M,\proj)$ is $2:1$. \end{proposition}
{\edit Thus
every projective structure on $M$ lifts to a
positive projective structure. If $M$ is a real projective $n$--manifold, 
then the holonomy $\rho:\pi_1M\longrightarrow \PGL(n+1,{\mathbb R})$
lifts to $\widetilde{\rho}:\pi_1M\longrightarrow \SL_{\pm}(n+1,{\mathbb R})$ and $\dev:\widetilde  M\to \RPn$ lifts to $\widetilde{\dev}:\widetilde M\to\SS^n$.
}
We will pass back and forth between projective geometry and positive projective geometry without mention.
 The {\em tautological bundle} over $\SS^n$ is 
$\pi_{\xi}: \R^{n+1}\setminus 0\longrightarrow \SS^n$.
The total space is a radiant affine manifold.
There is an action of $(\RR,+)$ on the total space
called
the {\em radial flow} given by
  $\Phi_t(x)=\exp(-t)x$.  This bundle is a principal $(\RR,+)$-bundle.
  All this structure is preserved by the action
  of $\GL(n+1,\RR)$ on $\RR^{n+1}\edit -0$ covering the action of $\SL_{\pm}(n+1,\RR)$ on $\SS^n$

Suppose $M$ is a projective $n$--manifold defined by a developing map
$\dev_{_M}:\widetilde M\to\SS^n$  with holonomy $\rho:\pi_1M\to\SL_{\pm}(n+1,\RR)$ and
 with universal cover $\pi_{_M}:\widetilde M \to M$. Then
 pullback gives
 a  line-bundle $\pi_{\xi}:\xi\widetilde{M}\to \widetilde{M}$ where
$$\xi\widetilde{M}=\{(\widetilde m,x)\in\widetilde{M}\times(\RR^{n+1}\setminus 0) : \dev(\widetilde m)=\pi_{\xi}(x)\} \qquad
 \pi_{\xi}(\widetilde m,x)=\widetilde m$$
Recall that we {\em defined} $\pi_1M$ as the group
of covering transformations of $\widetilde{M}$.
There is an action of ${\edit\tau\in} \pi_1M$ on $\xi \widetilde M$ given by $\tau\cdot(\widetilde m,x)=(\tau(\widetilde m),(\rho(\tau))(x))$.
The quotient  of $\xi\widetilde M$ by $\pi_1M$ is called the {\em tautological bundle} $\xi M$. There is a natural bundle map $\xi_{_M}:\xi M\to M$
given by $\xi_{_M}[\widetilde m,x]=\pi_{_M}(\widetilde m)$.
There is also a natural
 radiant affine manifold structure on $\xi M$ with developing map $\dev_{\xi}:\xi\widetilde{M}\to\RR^{n+1}\setminus 0$
 given by $\dev_{\xi}(\widetilde m,x)=x$, {\edit and with holonomy $\rho\circ(\xi_M)_*$}

There is a radial flow on $\xi M$ given by $\Phi_t[m,x]= [m,\exp(-t)\cdot x]$ so $\xi M$ is a principal $(\RR,+)$ bundle {\edit over $M$}.
   Orbits are called
  {\em flow-lines}.
    The {\em tautological circle bundle} is $\xi_1M=\xi M/\Phi_1$. It is sometimes called an {\em affine suspension}.
    Observe that the developing maps of $\xi M$ and $\xi_1M$ are the same.
    
 {\edit \begin{definition}\label{trick} We make  use of the following {\em covering space trick}. 
 If $M$ is a compact projective manifold (possibly with boundary) then $\xi_1 M$ is a compact affine
  manifold. Since $\pi_1(\xi_1M)=\pi_1M\oplus\ZZ$, 
  small deformations of the holonomy of $M$ give small deformations
  of the holonomy of $\xi_1M$. The latter is compact, so (\ref{deformGX}) implies there is a nearby affine structure on $\xi_1M$ with
  the deformed holonomy,
  and hence also a nearby 
  affine structure on the non-compact manifold $\xi M$.\end{definition}}
 
 \begin{definition}
 A {\em flow function} is a function $c:\xi M\to \RR$ that
  is {\em flow equivariant}, which means  that
$c(\Phi_t(p))=t+c(p)$  for all $p,t$.
\end{definition}
A flow function determines  a section $\sigma:M\to\xi M$ of the bundle 
$\xi_{_M}:\xi M\to M$ defined by $c(\sigma(x))=0$.
Conversely a section $\sigma$ determines a flow function $c$ via $c(x)= -t$ if $\Phi_t(x)=\sigma(\pi x)$.
So  the negative of the flow function is the amount of time  it takes a point to flow onto
{\edit this }section.

We will mostly be concerned with the situation where  $\Omega\subset \RPn$ is properly convex and $\dev_{_M}:\widetilde M\to\Omega$ is injective. In this case $\dev_{\xi}$ is a diffeomorphism
onto the cone $\Cone\Omega\subset\RR^{n+1}\setminus 0$ {\edit defined in section \ref{charfnssection}}. This identifies $\xi M$ with $\Cone\Omega/\Gamma,$
where $\Gamma=\hol(\pi_1M)$. Moreover $\dev_{_M}$ identifies $\widetilde M$ with a subset of
$\SS^n$. Using these identifications $\xi_{_M}:\xi M\to M$ is covered by $\pi_{\xi}$.

\section{Hessian Metrics and Convexity}\label{sec:hessian}

The ideas in this section go back to Koszul~\cite{Kos1, Kos2}, and we have followed the 
exposition in Shima and Yagi \cite{SHIMAYAQI}.
However our notation and terminology are somewhat different.

Suppose $M$ is a simply connected affine manifold 
and $\dev: M\to\RR^n$ is some developing map. Given $a,b\in M$ 
a {\em segment in $M$ from $a$ to $b$} is a map $\gamma:[u,v]\to M$
such that $\gamma(u)=a$ and $\gamma(v)=b$ and $\dev\circ\gamma$ is affine. We often denote such a  segment by $[a,b]$.
It is a {\em unit segment} if $[u,v]$ is the unit interval $I:=[0,1]$.
A {\em ray} in $M$ is a non-constant affine map $\gamma:[0,s)\to M$ with $s\in(0,\infty]$ which 
does not  extend to a segment. 
A {\em unit triangle} in $M$ is a map $\tau:\Delta\to M$ such that $\dev\circ\tau$ is affine where $\Delta\subset\RR^2$
is the triangle with vertices $0,e_1,e_2$. The sides of a triangle are segments.

A $C^2$ function $c:M\to\RR$ is {\em  Hessian} convex if for every
(non-degenerate) segment $\gamma:[-1,1]\to M$ the function $F=c\circ\gamma$
satisfies $F''>0$. Then $c$ defines a Riemannian metric on $M$ via
$\|\gamma'(0)\|^2=F''(0)$ called a {\em Hessian metric}. See Shima \cite{Shima} for a discussion.

An affine manifold $M$  has {\em convex boundary} if for each $p\in \partial M$
 there is an affine coordinate chart
$(U, \phi)$ with $p\in U$
 and a closed half-space $H\subset\RR^n$
such that $\phi(U)\subset H$ and $\phi(p)\in \partial H$.  

\begin{theorem}\label{hessianconvex} Suppose $M$ is a simply-connected
 affine $n$--manifold with convex boundary and $M$ has a Hessian metric 
that makes $M$ into a complete metric space.  Then $\dev:M\rightarrow\RR^n$ is an affine isomorphism  onto a convex subset of $\RR^n$.
\end{theorem}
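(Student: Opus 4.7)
The plan is to show that the developing map $\dev\colon M\to\RR^n$ is a diffeomorphism onto a convex subset of $\RR^n$. Let $c\colon M\to\RR$ denote the Hessian-convex function defining the complete metric $g$. The starting observation is that along any affine segment $\gamma$ in $M$, the function $F:=c\circ\gamma$ is strictly convex on its parameter interval, and the $g$-arc-length of $\gamma$ is $\int\sqrt{F''(t)}\,dt$.

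First I would establish an affine lifting lemma: for each $p\in\interior M$ and each affine segment $\sigma\colon[0,L]\to\RR^n$ with $\sigma(0)=\dev(p)$, a maximal lift $\tilde\sigma\colon[0,T)\to\interior M$ through $p$ either extends to $[0,L]$ with endpoint in $\overline M$, or $\tilde\sigma(t)$ limits into $\partial M$ as $t\to T$. The easy direction uses Cauchy-Schwarz:
\[
\int_0^T\sqrt{F''(s)}\,ds \ \le\ \sqrt{T\bigl(F'(T^-)-F'(0)\bigr)};
\]
when $F'(T^-)$ is finite, the $g$-arc-length of $\tilde\sigma$ is finite, so by completeness of $g$ a limit point $q\in\overline M$ exists, and if $q\in\interior M$, standard ODE lifting extends $\tilde\sigma$ past $T$, contradicting maximality; hence $q\in\partial M$.

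Granted the lifting lemma, $\dev$ is a covering of its image: over any small convex neighborhood $V\subset\dev(M)$ of $\dev(p)$, every preimage of $\dev(p)$ acquires a section by lifting the straight segments from $\dev(p)$ to points of $V$. Since $M$ is simply connected, $\dev\colon M\to\dev(M)$ is a diffeomorphism. Convexity of $\dev(M)$ then follows from the lifting lemma together with the hypothesis that $\partial M$ is affinely convex: given $x,y\in\dev(M)$, the segment $[x,y]$ lifts uniquely to a segment starting at $\dev^{-1}(x)$, and the local halfspace picture at each boundary point prevents the lift from leaving and re-entering $\interior M$; so the entire segment sits in $\dev(M)$.

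The main obstacle is the subcase $F'(T^-)=+\infty$ in the lifting lemma, where the Cauchy-Schwarz bound is vacuous and $\tilde\sigma$ need not be a Cauchy sequence in $g$. Here the completeness of the Hessian metric must be exploited transversally to $\tilde\sigma$, not merely along it; the cleanest route, following Shima's monograph cited in the paper, is via the Legendre-type map $p\mapsto dc_p$ read in an affine chart, which exchanges the singular behavior of $F'$ at the escape time with the regular behavior of a lifted segment in the dual chart and thereby forces the lift to approach $\partial M$.
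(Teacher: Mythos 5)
Your proposal contains a genuine gap, and you have correctly located it yourself: the case $F'(T^-)=+\infty$ in your lifting lemma. This case is not a removable technicality. It occurs already for the characteristic convexity function of a properly convex cone $\mathcal C\Omega$ (the model example of a complete Hessian metric in this paper): an affine segment heading toward the frontier of the cone exits in finite affine time, the metric length $\int\sqrt{F''}$ diverges, and $F'$ blows up, so the lift neither extends nor limits into $\partial M$ (which may be empty). Thus the dichotomy in your lifting lemma is false as stated, and the covering-space argument built on it --- lifting short radial segments over a small convex $V\subset\dev(M)$ from an arbitrary preimage of $\dev(p)$ --- is not justified, since such a lift can escape to infinity in the metric without its affine parameter reaching the end of the segment. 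The appeal to a ``Legendre-type map'' from Shima is a pointer to a possible repair, not an argument; as written, the central step of your proof is missing.

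The paper's proof uses the same Cauchy--Schwarz estimate $\ell(\gamma)\le\sqrt{|F'(1)|+|F'(0)|}$ but arranges never to apply it to a segment whose far endpoint is in doubt. It reduces the theorem to showing that any two segments $[p,a]$, $[p,b]$ span a triangle, and runs a connectedness argument on the set $\mathcal I$ of parameters $t$ for which the triangle with vertices $p$, $\alpha(t)$, $\beta(t)$ fills in. To show $\mathcal I$ is closed, one must control the cross-segments $\gamma_s$ joining $\alpha(s)$ to $\beta(s)$; the decisive point is that \emph{both} endpoints of every $\gamma_s$ lie on the compact sets $\alpha([0,1])$ and $\beta([0,1])$, which are already known to be in $M$, so $|\partial F/\partial t|$ at $t\in\{0,1\}$ is uniformly bounded by compactness and smoothness of $c$. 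Cauchy--Schwarz then gives a uniform length bound, completeness confines all the $\gamma_s$ to a compact metric ball, and they converge to a limiting segment. In other words, the blow-up of $F'$ can only happen at an endpoint that has not yet been shown to lie in $M$, and the triangle-filling scheme ensures such endpoints never arise. If you want to salvage your route, you would need to replace your lifting lemma by a statement of this relative type (lift segments only between points already joined to $p$), at which point you essentially recover the paper's argument.
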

\begin{proof} It suffices to show that for every pair of segments $[p,a]$ and $[p,b]$ in $M$  there is a segment
$[a,b]$ in $M$. This is because every pair of points in $M$ can be connected by a  polygonal path composed
of finitely many segments. One may replace a pair of adjacent segments in this path by one segment. It 
 follows that $a$ and $b$ are contained in a single segment. 
Since $\dev:M\to\RR^n$ sends segments to segments,
  if $\dev(a)=\dev(b)$ then the segment in $M$ from $a$
 to $b$ maps to a segment in $\RR^n$ with both endpoints the same. Hence  this segment is a single point,
 so $a=b$, and $\dev$ is injective. Since every pair of points in $M$ are contained in a segment, the same is true
 of $\dev(M)$, therefore $\dev(M)$ is convex. Thus $\dev$ is an affine isomorphism onto a convex set.

Given unit segments $\alpha:I\to [p,a]$ and $\beta:I\to [p,b]$ let $\mathcal I\subset I$ be the set
of $t \in I$ such there is a unit triangle  $\tau:\Delta\rightarrow M$ with vertices $p=\tau(0)$ and $\alpha(t)=\tau(e_1)$
 and $\beta(t)=\tau(e_2)$. Then $\mathcal I$ is connected
and contains $0$. It suffices to show $\mathcal I=I$ since then $\gamma(t)=\tau(te_1+(1-t)e_2)$ is a segment
containing $a$ and $b$.

Since $\partial M$ is convex it easily follows from the standard argument about sets
with  convex boundary that $\mathcal I$ is open. To show $\mathcal I$ is closed we may assume $\mathcal I=[0,1)$
by reparameterizing.  After this reparameterization, $\tau$ is defined on the interior of $\Delta$ and
also on  the two sides given by $\alpha$ and $\beta$ since $a,b\in M$. 
However $\tau$ might not be defined on part of
the {\edit side connecting $e_1$ to $e_2$.}

The Hessian metric is given by some function $c:M\to\RR$.
Given any segment $\gamma$ {\edit in $M$} define $\ell(\gamma)$ to be its length.
If $\gamma$ is a unit segment and $F=c\circ\gamma$ then
 $$\ell(\gamma)=\int_0^{1}\sqrt{F''(t)}dt$$
By the Cauchy-Schwartz inequality {\edit for $L^2$}
$$\ell(\gamma)\le\left(\int_0^1 F''(t)dt\right)^{1/2}\left(\int_0^1 dt\right)^{1/2}\le\sqrt{|F'(1)|+|F'(0)|}$$
For $s\in[0,1)$ there is a unit segment $\gamma_s$ given by $\gamma_s(t)=\tau(s(t  e_1+(1-t)e_2))$
with endpoints $\alpha(s)$ and $\beta(s)$. 
By the triangle inequality
$$d(p,\gamma_s(t))\le d(p,\gamma_s(0))+d(\gamma_s(0),\gamma_s(t))\le \ell(\alpha)+\ell(\gamma_s)$$  
The function $G(s,t)=\tau(s(t  e_1+(1-t)e_2))$ 
is defined and smooth for all $(s,t)$ in the domain {\edit $[0,1)\times[0,1]\cup \{1\}\times\{0,1\}$}.
 By compactness there is $K>0$ such
that  $|\partial  G/\partial t| \le  K$  for all $s\in[0,1]$ and $t\in\{0,1\}$.
It follows that for all $s\in[0,1)$ and $t\in[0,1]$ we have
$$d(p,\gamma_s(t))\le \ell(\alpha)+\sqrt{2K}=:R.$$
Since the metric on $M$ is complete{\edit,} the ball $P\subset M$ with center $p$ and radius $R$ 
 is compact and contains all the segments $\gamma_s$ {\edit with $s\in[0,1)$}.
It follows that $\gamma_s$ converges to a segment $\gamma_1\subset P$ as $s\to 1$ so $1\in\mathcal I$.
\end{proof}

\begin{definition}\label{defflowfn} If $M$ is a projective $n$--manifold a {\em convexity function for $M$}
is a Hessian-convex flow function $c:\xi M\to \RR$.   
It is {\em complete} if the Hessian metric given by $c$ is complete.
\end{definition}

The flow-equivariance of $c$ implies the radial flow acts by isometries of the Hessian
metric on $\xi M$ given by $c$. The 1-form $d c$ is preserved by the flow and therefore
is the pullback of a 1-form $\alpha$ on $\xi_1M$. Koszul works with $\alpha$ but we work with $c$.

 The {\em Hilbert metric} on a properly convex subset $\Omega\subset\RP^n$  is a {\edit Finsler} metric given
by the {\em Hilbert-Finsler norm}
 on $T_x\Omega$, see 
Papadopoulos and Troyanov \cite{Pap} and Marquis \cite{Marquishandbook}. For the definition of {\em Hessian-convex hypersurface} see the start of section \ref{smoothing}. 
 The next result is that the flow function $c$ is Hessian-convex iff the level set $c^{-1}(0)$ is a Hessian-convex hypersurface that
 is convex in the {\em backward} direction of the flow, {\edit i.e. $c^{-1}(-\infty,0]$ is convex}.

\begin{lemma}\label{backwardconvex} Suppose $M$ is properly
convex and $N=\xi M$  and $\|\cdot\|$ is the Hilbert-Finsler norm
 on $T_xN$.  Suppose 
 $c:N\to \RR$ is a flow function and $S=c^{-1}(0)$.
Then at $x\in S$  there is a splitting $T_xN=V\oplus E$ which is orthogonal
with respect to $Q:=D^2_xc$ where 
$V=\ker d_xc\subset T_x N$ is 
 the tangent hyperplane to the hypersurface $S$ and $E=\langle e\rangle$ where $e=\Phi'_0(x)$ is a tangent vector
  to the flow.
  
  Moreover $Q(e,e)=\|e\|^2=1$. Thus if $\kappa\in[0,1]$
 then $Q\ge\kappa\|\cdot\|^2$ iff $Q|V\ge\kappa(\|\cdot\|\ |V)^2$.
 
 In particular $c$ is Hessian-convex iff $S$ is a Hessian-convex hypersurface
 that is convex in the {\em backward} direction of the radial flow
\end{lemma}
\begin{proof} This is a local question so
it suffices to assume $\xi M$ is a properly convex cone in $\RR^{n+1}\setminus 0${\edit,}
and $S$ is a hypersurface{\edit,} and the radial flow is $\Phi_t(x)=\exp(-t)\cdot x$.
Since $c$ is a flow function $c(\Phi_t(x))=c(x)+t$.
This implies $c(s\cdot x)=c(x) -\log s$.
From this it follows that $D^2_xc(e,v)=0$ for all $v\in V$ which
proves the $Q$-orthogonality of the direct sum.

The Hilbert-Finsler norm on $(0,\infty)$ is $ds/s$. The radial flow on $(0,\infty)$
is $\Phi_t(s)=\exp(-t)s$ so $e=\Phi'_0(s)=s\cdot \partial/\partial s$ and $\|e\|=1$.
Moreover 
\[
Q(e,e)=s^2Q(\partial/\partial s,\partial/\partial s)=s^2d^2(-\log s)/d s^2=1.
\]
 Observe that $Q|V$ is positive definite iff $S$ Hessian-convex in the backward direction of the flow.
\end{proof}

\begin{theorem}\label{RFhessianisconvex} Suppose $M$ is a projective manifold with (possibly empty) convex boundary
 and $c:\xi M\to\RR$  is a complete convexity function.
 Then $M$ is properly convex.
\end{theorem}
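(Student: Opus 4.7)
The plan is to pass to the universal cover $\xi\widetilde M$ of $\xi M$, apply Theorem~\ref{hessianconvex} there, and then exploit the radial symmetry of the setup to descend proper convexity to $M$. Since $\xi M\to M$ is a principal $(\RR,+)$-bundle with contractible fiber, $\pi_1(\xi M)\cong\pi_1 M$, and the covering $\xi\widetilde M\to\xi M$ constructed in Section~\ref{tautbundlesec} is universal. I pull back $c$ to a function $\tilde c$ on $\xi\widetilde M$; this is again a flow function for the lifted radial flow and is Hessian-convex because the covering is a local affine diffeomorphism. Its Hessian metric is the pullback of the Hessian metric of $c$, so the covering is a local isometry, and completeness lifts through Riemannian covers (lift geodesics and invoke Hopf--Rinow). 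The projective convex boundary of $M$ also lifts cleanly: at a boundary point of $M$, a projective chart maps into one side of a projective hyperplane in $\RPn$, and the corresponding $\xi$-chart is the linear cone on it, so $\xi\widetilde M$ lies on one side of a linear hyperplane in $\RR^{n+1}$. Thus $\xi\widetilde M$ is a simply-connected affine $(n+1)$-manifold with affinely convex boundary and a complete Hessian metric.

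Theorem~\ref{hessianconvex} now applies, yielding that the affine developing map $\dev_\xi\colon\xi\widetilde M\to\RR^{n+1}$ is an affine embedding onto an open convex set $\Omega_\xi\subset\RR^{n+1}$; and $0\notin\Omega_\xi$ since the target of $\dev_\xi$ is $\RR^{n+1}\setminus 0$. Because $\dev_\xi$ intertwines the radial flow on $\xi\widetilde M$ with the scaling flow $x\mapsto e^{-t}x$ on $\RR^{n+1}\setminus 0$, the image $\Omega_\xi$ is invariant under every positive scalar, hence is an open convex cone with apex deleted. Any open convex set missing the origin is strictly separated from $\{0\}$ by a linear hyperplane (Hahn--Banach), so $\Omega_\xi$ lies in an open half-space of $\RR^{n+1}$. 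Consequently $\PP_+(\Omega_\xi)\subset\SS^n$ is open, convex, and contained in an open hemisphere, i.e., properly convex. Since $\dev_\xi$ is fiber-preserving over the developing map $\dev_M\colon\widetilde M\to\SS^n$, injectivity of $\dev_\xi$ forces injectivity of $\dev_M$, with image $\PP_+(\Omega_\xi)$. The free properly discontinuous covering action of $\pi_1 M$ on $\widetilde M$ transfers to such an action on $\PP_+(\Omega_\xi)$, exhibiting $M$ as a properly convex projective manifold.

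I expect the most delicate bookkeeping to be (i) verifying that completeness of the Hessian metric lifts through the covering $\xi\widetilde M\to\xi M$, and (ii) confirming that projective convex boundary of $M$ translates to affinely convex boundary of $\xi\widetilde M$ in the sense required by Theorem~\ref{hessianconvex}. Once these technical items are in place, the substance of the argument is that \emph{radiality forces convexity to be proper}: the origin, excluded from $\Omega_\xi$ by construction, acts as the apex that prevents the open convex image from ever containing a complete line through itself, and this is what promotes mere convexity in $\RR^{n+1}$ to proper convexity in $\RPn$.
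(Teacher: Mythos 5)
The first half of your argument --- lifting $c$ to $\xi\widetilde M$, checking the hypotheses of Theorem~\ref{hessianconvex} there, and concluding that $\dev_\xi$ is an affine embedding onto an open convex cone $\Omega_\xi\subset\RR^{n+1}\setminus 0$ --- is exactly what the paper does (it compresses this into one sentence), and your bookkeeping about lifting completeness and convex boundary is fine. The gap is in the final step. From ``$\Omega_\xi$ is an open convex cone with $0\notin\Omega_\xi$'' you conclude via Hahn--Banach that $\Omega_\xi$ lies in an open half-space, and then assert that $\PP_+(\Omega_\xi)$ is therefore properly convex. That implication is false: the open upper half-space $\{x_{n+1}>0\}$ is an open convex cone missing the origin and contained in an open half-space, but its image in $\SS^n$ is an open hemisphere, whose closure meets every hyperplane; equivalently its image in $\RPn$ is an affine patch, which is convex but not properly convex. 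Proper convexity requires the \emph{closure} of $\PP_+(\Omega_\xi)$ to miss a hyperplane, which is strictly stronger than $\Omega_\xi$ lying in an open half-space. Your closing remark that the deleted apex ``prevents the open convex image from ever containing a complete line'' is the same error in other words: the half-space contains no line through the origin and still fails to be properly convex.

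What is missing is any use of the positive-definiteness of $D^2c$ beyond injectivity of the developing map. The paper's proof uses it essentially: the level set $S=f^{-1}(0)$ (where $f=c\circ\dev^{-1}$) is a strictly convex hypersurface; at a point $q\in S$ with tangent hyperplane $P=\{x_1=1\}$, strict convexity forces $K=W\cap\{x_1=1+\epsilon\}$ to be \emph{compact} (with $W$ the sublevel set $f^{-1}(-\infty,0]$), and a cone-and-translation argument (Figure~\ref{fig1}) then traps the whole backward orbit $\Omega_\xi=\bigcup_{t}\Phi_t(S)$ inside the cone from $0$ over a compact convex set lying in an affine hyperplane; this is what keeps $\cl(\PP_+(\Omega_\xi))$ away from a hyperplane. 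Your proof needs this step or an equivalent one; as written it would ``prove'' that every open convex cone avoiding the origin projectivizes to a properly convex set, which is not true.
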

\begin{proof} By (\ref{hessianconvex}) $\dev:\xi\widetilde M\to\RR^{n+1}\setminus 0$ is injective and the
image is a convex cone
$\Cone\subset\RR^{n+1}$. It suffices to show {\edit that $\Omega:=\pi_{\xi}(\Cone)$} is properly convex.
{\edit Let $\pi:\xi\widetilde M \to\xi M$ be the universal cover and 
$\widetilde c=c\circ\pi$.}  The function 
$f={\widetilde c}\circ\dev^{-1}:\Cone\to \RR$ is {\edit a complete convexity function: it is} strictly{\edit-}convex{\edit,}
and the hypersurfaces $S_t=f^{-1}(t)$ are connected{\edit,} and strictly{\edit-}convex{\edit,} and foliate $\Cone$. 
The radial flow on $\xi {\edit\widetilde M}$ is conjugate to the radial flow $\Phi_{t}(x)=\exp(-t)\cdot x$ on $\RR^{n+1}$ so
$\Phi_s(S_t)=S_{t+s}$. Define $S:=S_0$.

\if0
  \begin{figure}[ht]	 
\begin{center}
	\psfrag{H}{$H$}
	 \psfrag{K}{$K$}
	 \psfrag{q}{$q$}
	 \psfrag{P}{$P$}
	 \psfrag{1}{$1$}
	 \psfrag{e}{$\epsilon$}
	 \psfrag{C}{$Q$}
	 \psfrag{S}{$S$}
	 \psfrag{T}{$\tau^{-1}(Q)$}
	 \psfrag{o}{$0$}
		 \includegraphics[scale=0.7]{propcvxpic.eps}
\end{center}
\caption{Flowing S backward}\label{fig1}
\end{figure}
\fi
  \begin{figure}[ht]	 
\begin{center}
		 \includegraphics[scale=1]{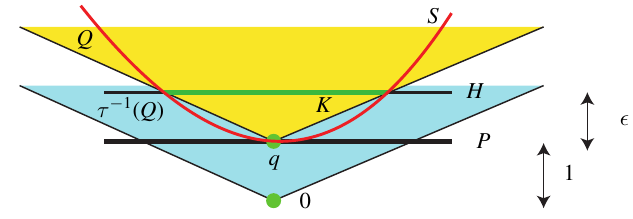}
\end{center}
\caption{Flowing S backward}\label{fig1}
\end{figure}

 Let $q$ be a point in the interior of $S$. We can choose coordinates in $\RR^{n+1}$
so that $S$ is tangent at $q=(1,0,\cdots,0)=e_{1}$ to the hyperplane $P$ given by $x_{1}=1$
and $S$ lies on the opposite side of $P$ to $0$. 

The sublevel set $W=f^{-1}(-\infty,0]=\cup_{t\le0}\Phi_t(S)\subset\Cone$ is obtained by flowing $S$ backward. 
Let $H$ be the hyperplane $x_{1}=1+\epsilon$. Refer to Figure \ref{fig1}.
We do not know that $S$ is {\em properly} embedded in $\RR^{n+1}$. However
if $\epsilon>0$ is small enough we can work in a chart
for a small neighborhood of $\dev^{-1}(q)$ in $\xi\widetilde M$ 
and see that $K=H\cap W$ is a compact convex set 
and $\partial K= H\cap S$.
 
  Let $Q$ be the convex cone consisting of the set of rays starting at $q$ 
and intersecting $K$. 
Since $q\in\partial W=S$ and $W$ is convex
it follows that $Q$ contains the subset of $W$ above $H$.
Unit vertical translation upwards $\tau:\RR^{n+1}\to\RR^{n+1}$ is given by $\tau(x)=x+e_1$. Note
that $\tau(Q)\subset Q$.
Since  we can assume $\epsilon<1$  it follows that $\tau(S)$ is above $H$, therefore $Q$ contains $\tau(S)$. Hence
 $\tau^{-1}(Q)$  contains $S$.
 Since $\tau^{-1}(Q)$  is the cone from $0$ of $\tau^{-1}(K)$, it is preserved by $\Phi$ so it contains
the entire orbit $\flow\cdot S=\Cone$. It follows that  $\Omega=\pi_{\xi}(\Cone)\subset\RPn$ 
is contained in  $\pi_{\xi}(\tau^{-1}(K))$. Since  $\tau^{-1}(K)$ is a compact convex set
in $x_n=\epsilon$ it follows that $\Omega$ is properly convex.  
\end{proof}

\section{The Characteristic Convexity Function}\label{charfnssection}

In this section $V={\mathbb R}^{n+1}$ and $\Omega\subset\sphere(V)=\SS^n$ is an open 
properly convex set.  The open convex cone ${\Cone}\Omega\subset V$ consists of all $t\cdot v$ with $v\in\Omega$ and $t>0$.
The {\em dual cone} ${\Cone}\Omega^*\subset V^*$ is the set of all $\phi\in V^*$
 with $\phi(x)>0$ for all $x\in{\Cone}\overline\Omega$. The {\em dual domain} is 
 $\Omega^*=\PP(\Cone\Omega^*)\subset\PP(V^*)$.
The
{\em characteristic function} 
 $\charfn=\charfn_{_\Omega}:{\Cone}\Omega \longrightarrow {\mathbb R}^+$
of Koecher~\cite{Koecher1957} and Vinberg~\cite{Vinberg1963homog}
is defined by
$$\charfn(x)=\int_{{\Cone}\Omega^*}e^{-\psi(x)}d\psi$$ 
where $d\psi$ is a {\edit fixed choice of} Euclidean volume form on $V^*$. 
This function is real analytic, non-negative, 
and ${\charfn}(tx)=t^{-(n+1)}\charfn(x)$ for $ t > 0$. More generally, 
if $A$ is in the subgroup $\GL({\Cone}\Omega)\subset \GL(V)$ that preserves ${\Cone}\Omega,$  then 
$\charfn(Ax)=(\det A)^{-1} \charfn(x)$.
The level sets of $\charfn,$ called {\em characteristic hypersurfaces}, are smooth, convex, and meet each ray in
${\Cone}\Omega$ once transversely. The {\em characteristic section} is the map
$\sigma_{_\Omega}:\Omega\longrightarrow {\Cone}\Omega$ 
 given by  
$$\sigma_{_\Omega}(x)=x\cdot (\rchi(x))^{1/(n+1)}$$
It has image  {\em the characteristic hypersurface} 
 $S_{\Omega}=\rchi^{-1}(1)$.
 The radial flow $\flow_t(x)=e^{-t}\cdot x$ on $V$ preserves ${\Cone}\Omega$ 
  {\edit and  there is a flow function on ${\Cone}\Omega$ given by}
$$\flowfn=\flowfn_{_\Omega}=(n+1)^{-1}\log \charfn$$ 
 The Hessian $D^2 c$ is a positive definite quadratic 
form  at each point of ${\Cone}\Omega$  and gives a complete metric on ${\Cone}\Omega$. 
Thus $\flowfn_{_\Omega}:\Cone\Omega\to \RR$ is a complete convexity function  
called the {\em characteristic convexity function}.  {\edit References for the above are
Goldman \cite{Gold1}, 
and \cite{Gold2}.}

If $\Gamma\subset\SL_{\pm}(\Cone\Omega)$ is the holonomy of a properly
convex manifold $M=\Omega/\Gamma$  with developing map $\dev$, then $\xi M$ is identified with ${\Cone}\Omega/\Gamma$.
Since $c_{_\Omega}$ is preserved by $\Gamma$ it covers a map $c_{\dev}=c_{_M}:\xi M\to \RR$.  
 This is a convexity function for $M$ called
the {\em characteristic convexity function} for $M$.

 \begin{definition}\label{devcdef} The subspace
 $\devc(M)\equiv\devGX_c(M,\proj_+)\subset \devGX(M,\proj_+)$ consists of the developing maps of properly 
 convex structures for which $\partial M$ is
 strictly{\edit-}convex. \end{definition}

\begin{proof}[Proof of (\ref{deformmfd}) when $M$ is closed.] {\edit Suppose} $M$ is properly convex 
{\edit with holonomy $\rho$ and $c_M:\xi M\to\RR$
  is a characteristic convexity function.} 
If
 {\edit $\rho'$ is close to $\rho$} then, by
 (\ref{deformGX}), there
is a  radiant affine manifold $N_1$ {\edit with holonomy $\rho'$} and a diffeomorphism $f:\xi_1 M\to N_1$  that is 
 close to an affine map.
Taking infinite cyclic covers gives a map $F:\xi M\to N$ that is   close to affine.
The  hypersurface $S=c^{-1}(0)\subset\xi M$ maps to a hypersurface in $N$. {\edit Since $S$ is compact, Hessian-convex, and 
transverse to the radial flow,
if $F$ is close enough in $C^2(\xi M, N)$ to affine, then $F(S)\subset N$ is Hessian-convex,
and transverse to the radial flow $\Phi_N$ on $N$.} 
This section of the radial flow defines a convexity function on $N$ by (\ref{backwardconvex}). 
This convexity function  is complete
because $N_1$ is compact and
every Riemannian metric on a compact manifold is complete. It follows from (\ref{RFhessianisconvex}) that $N/\Phi_N$ is properly
convex.\end{proof}

 From here until (\ref{strongtopdef}) we allow $\Omega$ to have boundary $\partial\Omega$
 that is an open subset of $\Fr(\Omega)$.
Let $\clsdsub$ be the set of closed subsets of $\SS^n$ equipped with the Hausdorff topology.
Let  $\domains$ be   the set of properly convex  $n$--manifolds  $\edit\Omega\subset\SS^n$ with (possibly empty) 
strictly{\edit-}convex boundary.
  There is an injective map $\iota:\domains\to \clsdsub\times \clsdsub$
  defined by $f(\Omega)=(\edit\cl(\Omega),\cl(\partial\Omega))$.
 The {\em Hausdorff boundary topology} on $\domains$ is the subspace topology given by this embedding. 
 Thus a neighborhood of $\Omega$ consists of {\edit all $\Omega'\in\domains$ {\edit close to} $\Omega$ such
 that $\partial\Omega'$ is also close
 to $\partial\Omega$}. This topology is given by a metric.

\begin{definition}\label{strongtopdef} The {\em strong geometric topology} on $\devc(M)$ is the  smallest refinement of the geometric topology
 such that the map $\devc(M)\to\domains$ given by $\dev\mapsto\image(\dev)$ is continuous.
\end{definition}
If $M$ is closed, the strong geometric topology
equals the geometric topology {\edit because
fixed points of elements of the holonomy are dense in $\partial\image(\dev)$.}
 In general two developing maps are close in this topology if they are close in  the $C^{\infty}$ topology 
on a large compact set
in the universal cover of the interior 
and, in addition, their images are close in the above sense. This can be expressed more simply using basepoints
in the space of developing maps as in  (\ref{GXbasepoint}):

Suppose
$\dev_{\rho}\in\devGX_c(M)$
and $\rho=\Hol(\dev_{\rho})$ and $\Gamma=\rho(\pi_1M)\subset\SL{_\pm}(n+1,\RR)$ and
$\Omega_{\rho}=\image(\dev_{\rho})\subset\SS^n$.
\emph{Choosing $\dev_{\rho}$ as a basepoint} means: replace $M$ by $\Omega_{\rho}/\Gamma$.
Thus $\dev_{\rho}=i:\widetilde{M}\hookrightarrow \SS^n$ is now the inclusion.
 Then \emph{$\dev_{\sigma}\in\devc(M)$ is close to $\dev_{\rho}$ in the strong geometric topology}
 means: {\edit the restrictions of $\dev_{\sigma}$ and $i$} are close in $C^{\infty}_w(\edit \interior(\widetilde M),\SS^n)$
 and $\Omega_{\sigma}=\image(\dev_{\sigma})$ is close to $\Omega_{\rho}$ in $\domains$.

{\edit There is a  {\edit similar} notion  for the radiant affine manifolds.}
The  radiant affine manifold $N=\Cone\Omega_{\rho}/\Gamma$ is  $\RA$-equivalent to $\xi M_{\rho}$.
The developing map  for $N$
is the inclusion $i=\dev_{\rho}^{\xi}:\Cone_{\edit\rho}\Omega\hookrightarrow\RR^{n+1}$  {\edit and $\dev_{\rho}^{\xi}\in \devGX(N,\RA)$}. 
 \emph{A nearby developing map
  $\dev^{\xi}_{\sigma}\in\devGX(N,\RA)$  in the strong geometric topology}
means:  {\edit the restrictions of $\dev^{\xi}_{\sigma}$ and $i$ are close} in $C^{\infty}_w(\edit\interior(\Cone\Omega_{\rho}),\RR^{n+1})$
 and in addition $\Cone\Omega_{\sigma}$ is close to
 $\Cone\Omega_{\rho}$
 in $\domains$.

Let $\domains'\subset\domains$ be the subspace of open properly convex sets.
For $K\subset V$ define  $\edit\alldomains(K)=\{\Omega\in  \domains'\ :\ K\subset\Cone\Omega\ \}$. The map $\domains'\to\domains'$ given by $\Omega\mapsto\Omega^*$
is continuous.

\begin{lemma}\label{vinbergconv} If $K\subset\R^{n+1}{\setminus 0}$ is compact, then
the function $\overline{\rchi}:\alldomains(K)\to C^{\infty}(K)$ defined by $\overline{\rchi}(\Omega)=\rchi_{_\Omega}|K$ is continuous.\end{lemma}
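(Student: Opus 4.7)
The plan is to prove continuity by writing $\chi_\Omega(x)$ as an integral over the dual cone, sandwiching the relevant dual cones between two fixed cones, and then applying the dominated convergence theorem to both the integrand and all its $x$-derivatives. Fix $\Omega_0\in\alldomains(K)$ and a sequence $\Omega_n\to \Omega_0$ in the Hausdorff boundary topology on $\mathcal C'$. I want to show that for every multi-index $\alpha$, the partial derivative $\partial^\alpha \chi_{\Omega_n}$ converges uniformly on $K$ to $\partial^\alpha \chi_{\Omega_0}$.

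The first step is a sandwich argument. Since $K\subset \Omega_0$ is compact and $\Omega_n\to\Omega_0$ in the Hausdorff boundary topology (so in particular $\overline{\Omega_n}\to\overline{\Omega_0}$ and $\partial\Omega_n\to\partial\Omega_0$), one can choose open properly convex sets $\Omega'$ and $\Omega''$ with
\[
K\;\subset\;\Omega'\;\Subset\;\Omega_0\;\Subset\;\Omega''
\]
and, for all sufficiently large $n$, $\Omega'\subset\Omega_n\subset\Omega''$. Dualizing reverses inclusions, so on the level of cones
\[
\mathcal C\Omega''{}^{*}\;\subset\;\mathcal C\Omega_n^{*}\;\subset\;\mathcal C\Omega'{}^{*}.
\]
Because $K\subset\Omega'$ is compact, there is a constant $\delta>0$ such that $\psi(x)\geq \delta\|\psi\|$ for every $x\in K$ and every $\psi\in\mathcal C\Omega'{}^{*}$ (this is the standard estimate that gives convergence of $\chi_{\Omega'}$). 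Hence for every multi-index $\alpha$, the function $|\psi^{\alpha}|\,e^{-\psi(x)}\,\mathbf{1}_{\mathcal C\Omega'{}^{*}}(\psi)$ is a Lebesgue-integrable majorant, independent of $n$ and uniform in $x\in K$, for the integrand
\[
\psi^{\alpha}e^{-\psi(x)}\mathbf{1}_{\mathcal C\Omega_n^{*}}(\psi)
\]
obtained by differentiating $\chi_{\Omega_n}(x)=\int e^{-\psi(x)}\mathbf{1}_{\mathcal C\Omega_n^{*}}(\psi)\,d\psi$ under the integral sign $|\alpha|$ times.

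Next I invoke the assertion in the excerpt that $\Omega\mapsto\Omega^{*}$ is continuous, which gives $\Omega_n^{*}\to\Omega_0^{*}$ in the Hausdorff boundary topology, and therefore $\mathcal C\Omega_n^{*}\to \mathcal C\Omega_0^{*}$ locally in Hausdorff: every compact subset of $\mathcal C\Omega_0^{*}$ lies in $\mathcal C\Omega_n^{*}$ for large $n$, and every point $\psi\notin\overline{\mathcal C\Omega_0^{*}}$ lies outside $\mathcal C\Omega_n^{*}$ for large $n$. Since $\partial\mathcal C\Omega_0^{*}$ is a Lebesgue-null subset of $V^{*}$, the indicators $\mathbf{1}_{\mathcal C\Omega_n^{*}}$ converge to $\mathbf{1}_{\mathcal C\Omega_0^{*}}$ almost everywhere. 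Dominated convergence, applied with the majorant from the previous paragraph, then yields
\[
\partial^{\alpha}\chi_{\Omega_n}(x)\;\longrightarrow\;\partial^{\alpha}\chi_{\Omega_0}(x)
\]
for each fixed $x\in K$ and each multi-index $\alpha$.

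Finally I upgrade this pointwise convergence to uniform convergence on $K$. Because the integrable majorant $|\psi^{\alpha}|e^{-\delta\|\psi\|}$ is uniform in $x\in K$, the family $\{\partial^{\alpha}\chi_{\Omega_n}\}_n$ is uniformly bounded on $K$, and differentiating once more in $x$ shows it is equicontinuous on $K$; together with pointwise convergence, this gives uniform convergence on $K$ by Arzel\`a–Ascoli, or directly by an $\varepsilon/3$ argument. This holds for every $\alpha$, so $\overline\chi(\Omega_n)\to\overline\chi(\Omega_0)$ in $C^{\infty}(K)$. The only point that requires genuine care is establishing the uniform integrable majorant together with the sandwich, which is essentially the quantitative form of the fact that $\chi_\Omega$ is a locally smooth function of $\Omega$; once that is in place, the proof is a straightforward application of dominated convergence under the integral sign.
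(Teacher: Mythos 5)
Your proof is correct and rests on the same two ingredients as the paper's: differentiating $\chi_\Omega(x)=\int_{\mathcal C\Omega^*}e^{-\psi(x)}\,d\psi$ under the integral sign so that each $\partial^\alpha$ of the integrand is a monomial in $\psi$ times $e^{-\psi(x)}$, and the uniform lower bound $\psi(x)\ge\delta\|\psi\|$ for $x$ in the compact set $K$, which makes the integrand integrable uniformly in $x$ and in the domain. The only difference is packaging: the paper bounds $|\partial^\alpha\chi_\infty-\partial^\alpha\chi_k|$ directly by an integral over the symmetric difference of the dual cones, which is already uniform on $K$, whereas you reach the same conclusion via dominated convergence followed by an equicontinuity upgrade.
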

\begin{proof} Since both topologies are metrizable it suffices to show that the image of a convergent sequence converges.
Suppose the sequence $\Omega_k\in\alldomains(K)$ converges to $\Omega_{\infty}\in\alldomains(K)$, and denote the respective characteristic functions by $\rchi_k$ and  $\rchi_\infty$.
Define the smooth function
 $h:V\times V^*\longrightarrow {\mathbb R}$ by $h(x,\phi)=\exp(-\phi x)$.
Then for $x\in K$,
if $\partial^{\alpha}$ is an $n$'th order mixed partial derivative on $V$, then 
 $\partial^{\alpha} h(x,\phi)=p(\phi)h(x,\phi)$ where $p(\phi)$ 
 is a monomial of degree $n$ in the coordinates of $\phi$. 
   Let $U=\Omega_{\infty}^*\ \Delta\ \Omega_k^*$ be the symmetric difference
   then
  $$|\partial^{\alpha} \charfn_{\infty}(x)-\partial^{\alpha} \charfn_k(x)|\le \int_{{\Cone} U} |p(\phi) h(x,\phi)| d\phi.$$
Since $K\subset{\Cone}(\Omega_k\cap\Omega_{\infty})$ it follows that $\phi(x)>0$ for all $x\in  K$ and $\phi\in \Cone U$. 
Now $p(\phi)$ is polynomial in $\phi$, and $h(x,\phi)$ is exponential in $\phi$, so $p(\phi) h(x,\phi)\to0$ exponentially fast as 
$\phi\to\infty$ in ${\Cone} U$.
It follows that if $U$ is small enough, then 
 $|\partial^{\alpha}\charfn_{\infty}-\partial^{\alpha}\charfn_k|<\epsilon$ on $K$. See (I.3.1) of
 Faraut and Kor{\'a}nyi \cite{FK} for more details.  \end{proof}
  
  {\edit
  It  follows that nearby properly convex manifolds  (without boundary) have nearby characteristic convexity functions:

\begin{lemma}\label{nearbycharfn}  Suppose $\partial M=\emptyset$. The map  $\devGX_c(M)\to C^{\infty}_w(\xi M)$ given by
 $\dev\mapsto c_{\dev}$ is continuous.
 Here, the strong geometric topology is used on  $\devGX_c(M).$\end{lemma}
\begin{proof} If $\dev,\dev'\in \devGX_c(M)$ are close in the strong geometric topology
then $\Omega'=\Im(\dev')$ is close to $\Omega=\Im(\dev)$ in $\domains$. By  (\ref{vinbergconv})
the restrictions to $K$ of $\rchi_{\Omega}$ and $\rchi_{\Omega'}$ are close. Composing with $\log$
shows that
$c_{\Omega}$ and $c_{\Omega'}$ are close on $K$. Thus the characteristic convexity functions 
$c_{\dev}$ and $c_{\dev'}$ are close.
\end{proof}
}

  We wish to give universal bounds on the derivatives of certain real-valued functions
 defined on  radiant affine manifolds of the form $N={\Cone}\Omega/\Gamma$.
  If $M$ is a smooth manifold and $f\in C^{\infty}(M)$ is a smooth function, then
  the $k$-th derivative $D^kf_x$ at $x\in M$ is a symmetric $k$-linear map on the vector space $V=T_xM$  
  (an element of $\Hom(\Sym^k(V),\RR)$).
   Given a norm on $V$ we  get an operator norm $\| D^kf_x\|$ defined as the infimum of $K$ for which
  $| D^kf_x(v_1,\cdots,v_k)|\le K\|v_1\|\cdots\|v_k\|$.  In our case $M={\Cone}\Omega$ is properly convex, 
  and hence a 
  Finsler manifold using the Hilbert metric on $\Cone\Omega$. This gives a norm $\|\cdot\|_{\Cone\Omega}$
   called the {\em Hilbert-Finsler norm}  on the 
  tangent space to ${\Cone}\Omega$. {\edit There is a} corresponding operator norm. The group $\GL(\Cone\Omega)$
  acts by isometries of this norm,  {\edit which therefore} pushes down to {\edit a norm on}
   the tangent space  of
    $N=\Cone\Omega/\Gamma$.
  
Given a point $x\in{\Cone}\Omega$ there is a {\em Benz{\'e}cri chart} $\tau$ for ${\Cone}\Omega$ (see \ref{benzthm}) 
centered on $x$. This chart determines a Euclidean metric $d_{E}$ on ${\Cone}\Omega$, and there
is also the Hilbert metric $d_H=d_{{\Cone}\Omega}$.
There is a constant $K >0$ depending only on dimension such that
in the ball of $d_H$-radius $1$ around $x$ we have
$K^{-1}\cdot d_{E}\le d_{H}\le K\cdot d_{E}$.
 
 It follows that universal bounds on operator norms  using the 
 Hilbert metric give bounds in the Euclidean metric for Benz{\'e}cri coordinates, and vice-versa. Thus we
 may  regard these universal bounds as bounds 
 on ordinary partial derivatives of functions
 defined in a small neighborhood of the origin in ${\mathbb R}^n$ by means of  Benz{\'e}cri
 coordinates.
 We now use Benz{\'e}cri's compactness theorem (\ref{benzcpct}) {\edit with (\ref{vinbergconv})}
  to provide uniform bounds on various properties
 of characteristic functions. 
 
 The restriction of the Hessian metric $D^2c$ to {\edit the characteristic hypersurface $\vinbergsurface=\vinbergsurface_{\Omega}$}
is  a Riemannian metric that is preserved by $\SL_{\pm}(\Cone\Omega)$. 
If $M=\Omega/\Gamma$ is a properly convex manifold,  then radial projection gives a natural identification
${\edit \widetilde M}\equiv\vinbergsurface$ and this puts a Riemannian metric on $M$ called the {\em induced metric}.  
The following seems to be folklore:

\begin{corollary}[bounded curvature]\label{boundedcurvature} For each dimension $n>0$ there is $k_n>0$ such that if
$M$ is a properly convex projective manifold of dimension $n$, then all sectional curvatures $\kappa$ of the induced
metric on $M$   satisfy
$|\kappa|< k_n$. Moreover the induced  metric is $k_n$-biLipschitz equivalent to the Hilbert metric, and is therefore complete.
\end{corollary}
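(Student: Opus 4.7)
The plan is to combine the $C^{\infty}$-continuity of the characteristic function supplied by Lemma~\ref{vinbergconv} with Benz\'ecri's theorem (to be proved in Section~\ref{sec:Benzecri}), thereby reducing both assertions to evaluating a continuous $\operatorname{PGL}$-invariant function on a compact quotient. Let $\mathcal{X}^{\bullet}$ denote the space of pairs $(\Omega,p)$ with $\Omega\subset\SS^n$ an open properly convex set and $p\in\Omega$, topologized by Hausdorff boundary convergence of $\Omega$ together with convergence of base points. The group $\SL_{\pm}(n+1,\RR)$ acts on $\mathcal{X}^{\bullet}$, and Benz\'ecri's theorem asserts that the quotient $\mathcal{X}^{\bullet}/\SL_{\pm}(n+1,\RR)$ is compact.

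First I would check projective invariance of the two quantities to be bounded. If $A\in\SL_{\pm}(n+1,\RR)$ sends $\mathcal{C}\Omega$ onto $\mathcal{C}\Omega'$, the transformation law $\chi_{\Omega'}\circ A=\chi_{\Omega}$ gives $c_{\Omega'}\circ A=c_{\Omega}$, so $A^{*}D^{2}c_{\Omega'}=D^{2}c_{\Omega}$. Hence $A$ carries $S_{\Omega}$ to $S_{\Omega'}$ isometrically, and the sectional curvatures of the induced metric at the point above $p$ depend only on the $\SL_{\pm}$-orbit of $(\Omega,p)$. The Hilbert-Finsler norm is classically projectively invariant, so the ratio between the Riemannian norm from $D^{2}c_{\Omega}|_{TS_{\Omega}}$ and the Hilbert-Finsler norm at $p$ is also an invariant of $(\Omega,p)$. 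Next, continuity: by Lemma~\ref{vinbergconv}, on any compact $K\subset\RR^{n+1}$ the assignment $\Omega\mapsto\chi_{\Omega}|_{K}$ is continuous into $C^{\infty}(K)$. Passing to $c_{\Omega}=(n+1)^{-1}\log\chi_{\Omega}$ and differentiating shows that the Hessian metric, its inverse, the Christoffel symbols, and the Riemann curvature tensor along $S_{\Omega}$ depend continuously on $(\Omega,p)\in\mathcal{X}^{\bullet}$. The same holds for the Hilbert-Finsler norm viewed as a function on $\mathcal{X}^{\bullet}$.

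The functions $(\Omega,p)\mapsto\sup_{\Pi}|\kappa(\Pi)|$ (the supremum over $2$-planes $\Pi\subset T_{p}S_{\Omega}$) and $(\Omega,p)\mapsto$ bi-Lipschitz distortion between the two norms at $p$ are therefore continuous and $\SL_{\pm}(n+1,\RR)$-invariant, so each descends to a continuous function on the compact space $\mathcal{X}^{\bullet}/\SL_{\pm}(n+1,\RR)$. Their suprema are finite; taking $k_{n}$ to be the larger of the two yields the curvature bound on the universal cover, which then passes to $M=\Omega/\Gamma$ because the induced metric is $\Gamma$-invariant, together with the bi-Lipschitz equivalence with the Hilbert metric. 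Completeness of the induced metric then follows from completeness of the Hilbert metric. The main obstacle is bookkeeping: one must verify that $C^{\infty}$-continuity of $\chi_{\Omega}$ in Lemma~\ref{vinbergconv} really propagates through inversion of the metric and two further differentiations to yield continuity of the curvature tensor uniformly on compact subsets of $\mathcal{X}^{\bullet}$, and one must invoke Benz\'ecri's theorem (equivalently, cocompactness of $\SL_{\pm}(n+1,\RR)$ on $\mathcal{X}^{\bullet}$) to conclude the compactness of the quotient.
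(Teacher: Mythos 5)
Your argument is correct and is essentially the paper's own proof: both rest on projective invariance of the curvature and the norm ratio, Benz\'ecri compactness of the space of pointed domains modulo $\PGL(n+1,\RR)$, and the $C^{\infty}$-continuity of $\charfn_{_\Omega}$ from (\ref{vinbergconv}). The paper merely phrases the same compactness argument contrapositively, extracting a convergent sequence of domains in Benz\'ecri position from a hypothetical sequence with blowing-up curvature, whereas you state it directly as boundedness of a continuous invariant function on a compact quotient.
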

\begin{proof} 
If the  first assertion is false there is a sequence $M_k=\Omega_k/\Gamma_k$ and a point $x_k\in M_k$
and a sectional curvature $\kappa>k$ at $x_k$. By Benz{\'e}cri  compactness (\ref{benzcpct}) we may assume these domains are in Benzecri position (\ref{benzthm}) with $x_k=0$ and
$\Omega_k\to\Omega_{\infty}$. The sectional curvature is given by {\edit a function that is a} formula involving 
various partial derivatives of {\edit the flow function} $c$.
By  (\ref{vinbergconv}) these {\edit functions} converge to some (finite) sectional curvature for $M_{\infty}$, a contradiction.
This also proves the biLipschitz result.
\end{proof}
 
  \begin{lemma}[uniform Hessian-convexity]\label{uniformconvexity}
    For each dimension $n$ there is 
   ${\edit \kappa=\kappa(n)>0}$ with the following property.   
   Suppose $\Omega\subset{\mathbb R}P^n$ is open and properly convex   
    and $c:{\Cone}\Omega\longrightarrow{\mathbb R}$ is the characteristic convexity function. Then
   $D^2\flowfn\ge \kappa \|\cdot\|_{\Cone\Omega}^2$ everywhere.
   \end{lemma}
  \begin{proof} Since $\flowfn$ is preserved by each element of $\GL({\Cone}\Omega)$ 
  up to adding a constant, it suffices
  to show there is $\kappa$ such that the result holds at the center
   of every  Benz{\'e}cri domain $\Omega=\SS^n\cap\Cone\Omega$. The set of all such domains is compact (\ref{benzcpct}),
   and by (\ref{vinbergconv}) the characteristic function varies smoothly with the Benz{\'e}cri domain, 
   {\edit so} the result follows.
     \end{proof}

If $f:(-\epsilon,\epsilon)\to \Cone\Omega$ is an arc parameterized by arc length
then $(c\circ f)''$ is a {\em second directional derivative}. The conclusion can be rephrased
as    $(c\circ f)''\ge\kappa$ for every second directional derivative. We will abuse notation
and write this as $c''\ge\kappa$.

 Suppose $B$ is  a properly convex submanifold
 of a properly convex manifold $M$, both without boundary{\edit,}  then $\xi B\subset\xi M$.
  The next result says that far inside $B$ 
  the  characteristic convexity functions for $B$ and $M$
 are almost equal.

 \begin{lemma}[convexity functions on submanifolds]\label{closecharfn} 
  Given $\epsilon>0$  and a dimension $n,$ there is $R=R(\epsilon,n)>0$ with the following property. 
  Suppose $B\subset M$   are properly convex $n$--manifolds with
  characteristic convexity functions $c_B$ and $c_M$.
  Let $U\subset B$ be the subset of all $x$  with $d_{M}(x,M\setminus B)>R$
   and define
   $g=c_M-c_B:\xi U\to\RR$.
Then  $\| D^k g\|<\epsilon$   for $0\le k\le 2$.
  \end{lemma}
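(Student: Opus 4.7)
The plan is a proof by contradiction using the Benz\'ecri compactness theorem (\ref{benzcpct}) together with the continuity of the characteristic function (\ref{vinbergconv}). Suppose the conclusion fails for some fixed $\epsilon > 0$ and some derivative order $j \in \{0,1,2\}$. Then there are sequences $R_k \to \infty$, pairs of properly convex $n$-manifolds $B_k \subset M_k$ without boundary, points $x_k \in B_k$ with $d_{M_k}(x_k, M_k \setminus B_k) > R_k$, and points $\tilde y_k \in \xi B_k$ above $x_k$, such that $\|D^j g_k\| > \epsilon$ at $\tilde y_k$, where $g_k = c_{M_k} - c_{B_k}$. Because $g_k$ is flow-invariant (both $c_{M_k}$ and $c_{B_k}$ being flow functions) and the radial flow acts by Hilbert-Finsler isometries, we may normalize $\tilde y_k$ to lie on a fixed characteristic hypersurface.

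Next I would lift to universal covers, identifying $\widetilde{M}_k \equiv \Omega_{M,k} \subset \mathbb S^n$ via the developing map, and letting $\Omega_{B,k} \subset \Omega_{M,k}$ be the component of the preimage of $B_k$ containing a chosen lift $\tilde x_k$ of $x_k$. Using Benz\'ecri compactness, apply projective transformations so that every $\Omega_{M,k}$ is in standard position with $\tilde x_k$ at a fixed basepoint $x_0$, and pass to a subsequence with $\Omega_{M,k} \to \Omega_\infty$ in the Hausdorff boundary topology, for some properly convex $\Omega_\infty$ containing $x_0$.

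The key step, and the main obstacle, is to show that $\Omega_{B,k}$ converges to $\Omega_\infty$ as well. Writing $p_k \colon \Omega_{M,k} \to M_k$ for the covering projection, the full preimage $p_k^{-1}(B_k) = \bigsqcup_i \Omega_{B,k,i}$ is a disjoint union of open components of which $\Omega_{B,k} = \Omega_{B,k,0}$ is the one containing $\tilde x_k$. Every point of $\Omega_{M,k} \setminus \bigsqcup_i \Omega_{B,k,i}$ projects into $M_k \setminus B_k$, so the distance hypothesis gives
\[
d_{\Omega_{M,k}}\Big(\tilde x_k,\ \Omega_{M,k} \setminus \bigsqcup_i \Omega_{B,k,i}\Big) \ >\ R_k.
\]
Thus the Hilbert ball of radius $R_k$ about $\tilde x_k$ in $\Omega_{M,k}$ is contained in $\bigsqcup_i \Omega_{B,k,i}$; being connected and containing $\tilde x_k$, it lies in the single component $\Omega_{B,k}$. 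Since Hilbert balls of growing radius centered at a fixed interior point exhaust a properly convex domain, and $\Omega_{M,k} \to \Omega_\infty$, these balls converge to $\Omega_\infty$ in the Hausdorff boundary topology, and $\Omega_{B,k}$, sandwiched between the balls and $\Omega_{M,k}$, also converges to $\Omega_\infty$.

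To finish, I would fix a compact set $K \subset \mathcal C \Omega_\infty$ containing a neighborhood of the points in $\mathcal C \Omega_{M,k}$ lying over $\tilde y_k$ (possible because the normalization keeps these points bounded). By (\ref{vinbergconv}), both $\chi_{\Omega_{B,k}}|_K$ and $\chi_{\Omega_{M,k}}|_K$ converge in $C^\infty(K)$ to $\chi_{\Omega_\infty}|_K$, so $(n+1)\, g_k = \log \chi_{\Omega_{M,k}} - \log \chi_{\Omega_{B,k}} \to 0$ in $C^2(K)$. Because the Hilbert-Finsler operator norm is uniformly comparable (with constant depending only on dimension) to the Euclidean operator norm in Benz\'ecri coordinates, this convergence forces $\|D^j g_k(\tilde y_k)\| \to 0$, contradicting the lower bound $\epsilon$.
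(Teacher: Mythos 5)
Your proof is correct and takes essentially the same route as the paper's: reduce to the sphere using flow-invariance of $g$, put $\Omega_M$ in Benz\'ecri position centered at the point in question so that the hypothesis $d_M(x,M\setminus B)>R$ forces $\Omega_B$ (the relevant component of the preimage, sandwiched between a large Hilbert ball and $\Omega_M$) to be Hausdorff-close to $\Omega_M$, and then invoke (\ref{vinbergconv}) together with the comparability of the Euclidean and Hilbert--Finsler operator norms in Benz\'ecri coordinates. The only difference is presentational: the paper asserts the uniform bound $f(R)\to 0$ directly, whereas you extract the same uniformity by a sequential compactness/contradiction argument, which is the device the paper itself uses for (\ref{boundedcurvature}).
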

  
  \begin{proof}  Let $\Omega_U\subset\Omega_B\subset \Omega_M\subset\SS^n$ be images
  of the developing maps of $U\subset B\subset M$
  respectively. 
  Since $ g$ is constant along rays from the origin in ${\Cone}\Omega_U$ 
  it suffices to show the bounds hold for $x\in\Omega_U:=\SS^n\cap\Cone\Omega_U$.
  Choose a Benz{\'e}cri chart for $\Omega_M$ 
  centered on $x$. In this chart the Euclidean distance between
   $\partial\Omega_M$
    and $\partial\Omega_B$  is bounded above by a function $f(R)$ independent of $\Omega_M$ and $\Omega_B$  
    and $f(R)\to 0$ as $R\to \infty$.
  The result
now  follows {\edit using (\ref{benzcpct}) and (\ref{vinbergconv})}.
    \end{proof}

\section{Deforming Properly Convex Manifolds rel ends}\label{sec:reldeform}

In this section we prove a version of
(\ref{relopen}) 
for convex manifolds. We
show that the only obstruction to deforming a properly convex manifold is whether
 the ends 
have such a deformation.
Suppose $M_{\rho}$ is a properly convex manifold with holonomy $\rho$.  
The main result of this section (\ref{kleinianedndatamapopen}) is that
for representations $\sigma$ sufficiently close to $\rho$, if the ends of $M_{\rho}$ 
can be deformed to properly convex manifolds with holonomy the restriction of $\sigma$,
then these deformations can be extended to all of $M_{\rho}$ to give
 a properly convex structure  $M_{\sigma}$. 

\begin{definition} A Finsler manifold $M=\Omega/\Gamma$ has {\em controlled ends}
if there is a 
smooth proper function, called an {\em exhaustion function},
$f:M\to[0,\infty)$ and $K>0$ such that $\|Df\|, \|D^2f\|<K$ in the Finsler norm.  \end{definition}

For example every finite volume complete hyperbolic manifold has controlled ends.
If $C\cong \partial C\times[0,\infty)$ is a horocusp in a hyperbolic manifold $M$ then the horofunction $f(x)=d_M(x,\partial C)$
is an exhaustion function. A similar construction works on a {\em generalized cusp}
(\ref{controlfn}).
 There are complete Riemannian manifolds with no exhaustion function. However:

\begin{proposition}\label{PCcontrol} Every properly convex manifold has controlled ends.
\end{proposition}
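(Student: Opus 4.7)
The plan is to construct $f : M \to [0, \infty)$ by a partition-of-unity argument whose local pieces have universally controlled derivatives, using Benz\'ecri's compactness theorem. Fix a basepoint $x_0 \in M$; by (\ref{boundedcurvature}) the Hilbert metric $d_M$ is complete, so the Hilbert distance from $x_0$ is proper but only Lipschitz. I will replace it with a smoothed version having uniformly bounded first and second derivatives.

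First, for each $p \in M$, Benz\'ecri's theorem produces a chart $\tau_p$ centered at $p$ in which the Hilbert ball $B_M(p, 1)$ has bounded Euclidean size and shape, and the Hilbert-Finsler norm on $B_M(p, 1)$ is bi-Lipschitz to the Euclidean one with constants depending only on $n = \dim M$. Choose a maximal $\tfrac{1}{2}$-separated net $\{p_i\}$ in $(M, d_M)$, so the balls $B_i := B_M(p_i, 1)$ cover $M$; a volume comparison in Benz\'ecri coordinates gives a bound $N = N(n)$ on the multiplicity of this cover. Pulling back a fixed smooth Euclidean bump via $\tau_{p_i}$ yields $\eta_i : M \to [0, 1]$ supported in $B_i$ with the operator norms $\|D^k \eta_i\|$ (in the Hilbert-Finsler norm) bounded uniformly in $i$ for $k = 0, 1, 2$. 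Normalizing gives a smooth partition of unity $\rho_i := \eta_i / \sum_j \eta_j$ with the same type of bounds.

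Set $c_i := d_M(x_0, p_i)$ and define
\[
f \;:=\; \sum_i c_i\, \rho_i.
\]
Every $B_i$ that contains a point $x$ satisfies $|c_i - d_M(x_0, x)| \leq d_M(p_i, x) < 1$ by the triangle inequality, so $|f(x) - d_M(x_0, x)| \leq 1$; in particular $f$ is proper. For the derivative bounds, differentiating $\sum_i \rho_i \equiv 1$ gives $\sum_i D^k \rho_i \equiv 0$ for $k = 1, 2$, and so at any $x \in B_j$,
\[
D^k f(x) \;=\; \sum_i (c_i - c_j)\, D^k \rho_i(x).
\]
At most $N$ terms are nonzero and $|c_i - c_j| \leq 2$ whenever $B_i \cap B_j \ni x$, so $\|D^k f\|$ is bounded by a constant depending only on $n$.

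The main obstacle is setting up the uniform local picture: that the Hilbert-Finsler norm, its distortion relative to Euclidean in a Benz\'ecri chart, and covolume ratios of small Hilbert balls are all simultaneously controlled by constants depending only on $n$. This is in the same spirit as the compactness arguments underlying (\ref{boundedcurvature}), (\ref{vinbergconv}), and (\ref{uniformconvexity}); once in hand, the partition-of-unity construction is routine.
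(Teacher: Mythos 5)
Your construction is a genuinely different route from the paper's, which simply invokes (\ref{boundedcurvature}) to get a complete Riemannian metric of bounded sectional curvature and then cites Schoen--Yau (and Tam, Chow) for the existence of a proper function with bounded gradient and Hessian on any such manifold. You are in effect re-proving that black box directly in the projective setting, which is a legitimate and more self-contained approach; the skeleton (separated net, uniformly controlled partition of unity, $f=\sum_i c_i\rho_i$ with $c_i=d_M(x_0,p_i)$, and the telescoping identity $D^kf=\sum_i(c_i-c_j)D^k\rho_i$) is exactly right.

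There is, however, one genuine gap: the bump functions $\eta_i$ are not well defined as you describe them when the injectivity radius of $M$ at $p_i$ is small, which is precisely what happens in the ends. A Benz\'ecri chart $\tau_{p_i}$ is a normalization of the developing image $\Omega$, not a chart for $M$ near $p_i$; ``pulling back a Euclidean bump supported in $B_M(p_i,1)$'' presupposes that this ball is evenly covered. Deep in a cusp the ball $B_M(p_i,1)$ is not simply connected, and the natural substitute --- pushing a bump $\tilde\eta_i$ on $\Omega$ down to $M$ by summing over the deck group --- has an unbounded number of nonzero summands at a point (the $\Gamma$-orbit of a point can have arbitrarily many representatives in a unit Hilbert ball of $\Omega$), so neither $\eta_i\le 1$ nor the uniform $C^2$ bounds survive. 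The same issue infects the multiplicity count: the packing argument must be run in $\Omega$, not in $M$. The repair is to perform the entire construction $\Gamma$-equivariantly upstairs: take the preimage in $\Omega$ of a $\tfrac12$-separated net in $M$ (this is $\tfrac12$-separated in $d_\Omega$ since the covering projection is $1$-Lipschitz, so Benz\'ecri volume comparison in $\Omega$ does bound the multiplicity by $N(n)$), build the bumps and the partition of unity on $\Omega$ with uniform bounds there, and assign to each net point $\tilde q$ the $\Gamma$-invariant coefficient $c_{\tilde q}=d_M(x_0,\pi(\tilde q))$. The resulting $\tilde f=\sum c_{\tilde q}\rho_{\tilde q}$ is then $\Gamma$-invariant and descends to the desired exhaustion function, and your estimates ($|c_{\tilde q}-c_{\tilde q'}|\le 2$ on overlaps, at most $N(n)$ overlapping supports) go through verbatim.
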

\begin{proof} By (\ref{boundedcurvature}) every properly convex manifold admits a complete Riemannian metric that
is biLipschitz equivalent to the Hilbert metric and which has
 bounded sectional curvature.
 It is a result of Schoen and Yau \cite{SY} (see also Tam \cite{TAM} and Proposition 26.49 in Chow {\em et al.} \cite{CHOW})
 that a complete Riemannian manifold of bounded sectional curvature
has a proper function with bounded gradient and Hessian. \end{proof}

\begin{definition} A {\em localization function} on a Finsler manifold $M$ is a smooth function $\lambda:M\to[0,1]$
with compact support and $\| D\lambda\|,\| D^2\lambda\|\le 1$.
\end{definition}

\begin{corollary}\label{transitionfn} If $M$ is a properly convex manifold and $ X\subset M$
 is compact, then there is a localization
function $\lambda$ on $M$ with $\lambda( X)=1$.
\end{corollary}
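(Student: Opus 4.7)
The plan is to build $\lambda$ by composing a one-variable bump function with an exhaustion function, then scale the bump so that the chain-rule derivatives fit inside the required bound.

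First I would invoke the preceding proposition to produce a smooth proper exhaustion $f:M\to[0,\infty)$ with a constant $K>0$ such that $\|Df\|,\|D^2f\|<K$ in the Hilbert--Finsler norm. Since the given compact set (call it $K_0$ to avoid collision with the constant $K$) is compact and $f$ is continuous, there is an $R>0$ with $f(K_0)\subset[0,R]$.

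Next, fix once and for all a smooth $\phi_0:\RR\to[0,1]$ with $\phi_0\equiv 1$ on $(-\infty,0]$, $\phi_0\equiv 0$ on $[1,\infty)$, and let $M_1=\|\phi_0'\|_\infty$, $M_2=\|\phi_0''\|_\infty$. For a parameter $T>0$ to be chosen, define $\phi(t)=\phi_0\!\bigl((t-R)/T\bigr)$, so $|\phi'|\le M_1/T$ and $|\phi''|\le M_2/T^2$, with $\phi\equiv 1$ on $(-\infty,R]$ and $\phi\equiv 0$ on $[R+T,\infty)$. Set
\[
\lambda(x)=\phi(f(x)).
\]
Then $\lambda:M\to[0,1]$, $\lambda|_{K_0}=1$, and $\mathrm{supp}(\lambda)\subset f^{-1}([0,R+T])$, which is compact because $f$ is proper.

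For the derivative bounds, the chain rule gives $D\lambda=\phi'(f)\,Df$ and $D^2\lambda=\phi''(f)\,Df\otimes Df+\phi'(f)\,D^2f$, so
\[
\|D\lambda\|\le \tfrac{M_1 K}{T},\qquad \|D^2\lambda\|\le \tfrac{M_2 K^2}{T^2}+\tfrac{M_1 K}{T}.
\]
Choosing $T$ sufficiently large forces both quantities to lie in $[0,1]$, producing the desired localization function. There is no real obstacle here; the one point that requires care is that the operator norms on $Df$ and $D^2f$ are taken in the Hilbert--Finsler norm (which is the one appearing in the definition of \emph{controlled ends} and of a localization function), so that the bounds from the previous proposition feed directly into the chain rule without any change of metric.
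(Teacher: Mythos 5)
Your argument is correct and is exactly the intended derivation: the paper states this as a corollary of the controlled-ends proposition without writing out a proof, and the standard composition of a one-variable cutoff with the exhaustion function, followed by rescaling the cutoff to absorb the chain-rule constants, is precisely what is meant. The one point you flag---that all operator norms are taken in the same Hilbert--Finsler norm so the bounds from the proposition feed directly into $D\lambda=\phi'(f)\,Df$ and $D^2\lambda=\phi''(f)\,Df\otimes Df+\phi'(f)\,D^2f$---is handled correctly.
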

 \begin{proof} By (\ref{PCcontrol}) there is an exhaustion function $f:M\rightarrow[0,\infty)$.
By multiplying $f$ by a small positive scalar we may assume $\|Df\|, \|D^2f\|<1$ and that 
 $f(X)\subset[0,1]$. Let $\beta:{\edit [0,\infty)}\rightarrow[0,1]$ be a  smooth  
function with compact support and
$\beta({\edit t})=1$ for all ${\edit t}\le 1${\edit,} and $\|\beta'(t)\|,\|\beta''(t)\|\le 1/10$ for all $t$. Then $\lambda=\beta\circ f$
has compact support and 
$\lambda(X)=1$.
By the chain rule $\| D\lambda\|,\| D^2\lambda\|\le 1$.
\end{proof}

    Suppose $M=A\cup {\mathcal B}$ is a connected  $n$--manifold and  $A$ is a compact
submanifold with $\partial A=\partial M\sqcup \partial {\mathcal B}$ and $\mathcal B$ has $k$ 
components $B_i$ with $1\le i\le k$ such that
 $B_i=\partial{B_i}\times[0,\infty)$.
By (\ref{defrelhol})   there is  a relative holonomy map $$\enddatamap_{\proj}:\devGX(M,\proj)\to\enddata(M,\mathcal B,\proj)$$
The subspace
 $\devGX_c(M,\proj)\subset \devGX(M,\proj)$ consists of the developing maps of properly 
 convex structures for which $\partial M$ is
 strictly{\edit-}convex. 
 The subspace $\enddata_e(M,\mathcal B,\proj)\subset\enddata(M,\mathcal B,\proj)$ consists of the data for
 which each $B_i$ is properly convex{\edit, and $\partial B_i$ is strictly{\edit-}convex}. Then 
 $\devGX_e(M,\proj)=\enddatamap_{\proj}^{-1}\enddata_ e(M,\mathcal B,\proj)$
 consists of developing maps for which these ends are properly convex with strictly{\edit-}convex boundary. Finally
 $\devGX_{ce}(M,\proj)=\devGX_{c}(M,\proj)\cap \devGX_{e}(M,\proj)$
 is the subspace of developing maps for properly convex structures on $M${\edit,} with $\partial M$ strictly{\edit-}convex{\edit,}
 and for which  each $B_i$ is properly convex{\edit, and $\partial B_i$ is strictly{\edit-}convex}.
 {\edit The following is well known for manifolds of negative sectional curvature, cf (2.3) in Baker and Cooper \cite{BC1}.
 \begin{lemma}\label{convexsubmfdinjective} Suppose $M$ is a properly convex real projective manifold, possibly with boundary,
  and $B\subset M$ is a properly convex submanifold.
Then $B$ is $\pi_1$-injective in $M$.
\end{lemma}
\begin{proof} The holonomy for $B$ is injective because $B$ is properly convex. The holonomy
for $B$ factors through the holonomy for $M$, therefore the map induced by inclusion 
$\pi_1B\to\pi_1M$ is injective.
\end{proof}
} 

   \begin{theorem}
    \label{extenddef}
$\enddatamap_{_\proj}:\devGX_{ce}(M,\proj)\to\enddata_e(M,\mathcal B,\proj)$ is   open using
the geometric topology on the domain and the strong geometric topology on the codomain. \end{theorem}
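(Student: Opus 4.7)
The plan is to combine the Extension Theorem (\ref{relopen}) for $(G,X)$-structures with the convexity function strategy used to handle the closed case of (\ref{deformmfd}). Take $\dev_\rho$ as basepoint, so $M_\rho=\Omega_\rho/\Gamma$ with characteristic convexity function $c_\rho:\xi M_\rho\to\RR$. Given end data $(\sigma,\dev_{\sigma,1},\ldots,\dev_{\sigma,k})$ in a small neighborhood of $\enddatamap_\proj(\dev_\rho)$ in the strong geometric topology, (\ref{relopen}) produces $\dev_\sigma\in\devGX(M,\proj)$ close to $\dev_\rho$ in the geometric topology with $\enddatamap_\proj(\dev_\sigma)=(\sigma,\dev_{\sigma,1},\ldots,\dev_{\sigma,k})$. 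Each end $B_i$ under $\dev_\sigma$ is thus properly convex, with characteristic convexity function $c_{\sigma,i}:\xi B_i\to\RR$. What remains is to verify that $\dev_\sigma$ defines a properly convex structure on all of $M$ with strictly convex $\partial M$.

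Choose a compact codimension-$0$ submanifold $A'\subset M$ with $A\subset\interior A'$ and such that each $A'\cap B_i$ is a long collar $\partial B_i\times[0,R]$, the depth $R$ to be chosen later. Repeating the proof of (\ref{deformmfd}) for closed manifolds on the compact piece $\xi A'$: the slice $S=c_\rho^{-1}(0)\cap\xi A'$ is Hessian-convex and transverse to the radial flow, and the near-affine identification of $\xi M_\rho$ with $\xi M_\sigma$ arising from the $C^\infty_w$-closeness of the two developing maps sends $S$ to a hypersurface in $\xi M_\sigma$ that remains strictly convex and transverse to the flow, since these are open $C^2$-conditions. By (\ref{backwardsconvex}) this hypersurface determines a flow function $c_A$ on a neighborhood of $\xi A$ in $\xi M_\sigma$ which is Hessian-convex there. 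Using (\ref{transitionfn}) let $\lambda:M\to[0,1]$ be a localization function with $\lambda\equiv 1$ on $A$ and support in $A'$; its flow-invariant lift to $\xi M_\sigma$ is also written $\lambda$. After shifting each $c_{\sigma,i}$ by an additive constant so that it agrees with $c_A$ on a chosen reference flowline in the collar, define
\[
c_\sigma=\lambda\, c_A+(1-\lambda)\, c_{\sigma,i}\quad\text{on }\xi B_i,\qquad c_\sigma=c_A\quad\text{on }\xi A.
\]
Because $\lambda$ is flow-invariant and $c_A$, $c_{\sigma,i}$ are flow functions, $c_\sigma$ is a flow function on all of $\xi M_\sigma$.

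The heart of the proof is to verify $c_\sigma$ is Hessian-convex. Expanding $D^2 c_\sigma$ in the collar where $\lambda$ varies yields $\lambda\, D^2 c_A+(1-\lambda)\, D^2 c_{\sigma,i}$ plus cross-terms built from $D\lambda,D^2\lambda$ and the differences $c_A-c_{\sigma,i}$, $D(c_A-c_{\sigma,i})$. By (\ref{closecharfn}) applied inside $M_\rho$, the original characteristic convexity functions $c_\rho$ and $c_{\rho,i}$ are $C^2$-close whenever the collar depth $R$ is large enough; by (\ref{nearbycharfn}), which uses the strong geometric topology on the codomain, $c_{\sigma,i}$ is $C^2$-close to $c_{\rho,i}$; and by construction $c_A$ is $C^2$-close to $c_\rho$ on $\xi A'$. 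Chaining these, $c_A-c_{\sigma,i}$ and its first two derivatives are arbitrarily small on the support of $D\lambda$, whereas $\lambda\, D^2 c_A+(1-\lambda)\, D^2 c_{\sigma,i}\ge\kappa(n)\|\cdot\|^2$ by the uniform Hessian bound (\ref{uniformconvexity}). Hence $c_\sigma$ is Hessian-convex throughout $\xi M_\sigma$. Completeness of its Hessian metric follows because outside a compact subset of each end $c_\sigma$ coincides with the complete characteristic convexity function $c_{\sigma,i}$, while the compact core contributes only bounded distances. Applying (\ref{RFhessianisconvex}) shows $M_\sigma$ is properly convex, and strict convexity of $\partial M\subset A$ is preserved since it is an open $C^2$-condition and $\dev_\sigma$ is $C^\infty$-close to $\dev_\rho$ on $\xi A$.

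The main obstacle is the gluing step: keeping the interpolated $c_\sigma$ Hessian-convex when the two pieces $c_A$ and $c_{\sigma,i}$ agree only approximately. This is exactly what the uniform convexity bound (\ref{uniformconvexity}) and the $C^2$-continuity statements (\ref{closecharfn}) and (\ref{nearbycharfn}) combine to deliver, and it is also why the codomain must carry the strong geometric topology: without Hausdorff control on the end domains, $c_{\sigma,i}$ cannot be forced to be $C^2$-close to $c_{\rho,i}$ in the collar.
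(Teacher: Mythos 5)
Your proposal is correct and follows essentially the same route as the paper: reduce to building a complete convexity function on $\xi M_\sigma$ by interpolating, deep in the collars, between (a transported copy of) the characteristic convexity function of $M_\rho$ and those of the deformed ends, controlling the cross terms with (\ref{closecharfn}), (\ref{nearbycharfn}) and (\ref{uniformconvexity}), then invoking (\ref{backwardsconvex}) and (\ref{RFhessianisconvex}). The only differences are organizational: the paper performs the blend in two stages (first $c_{\rho,M}$ with $c_{\rho,B}$, then the result with $(c_{\sigma,B})\circ h$) and handles the boundary case via (\ref{smoothbdry}) rather than by the open-condition remark, but these do not change the argument.
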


\begin{proof}  Initially assume $M$ has no boundary.  Given a developing map in $\devGX_{ce}(M,\proj)$
the first step is to show a nearby relative holonomy
is given by a nearby (possibly not convex) projective structure that has the given end data. Then this structure is shown to be properly convex.

{\edit By (\ref{convexsubmfdinjective}) each component of $\mathcal B$ is $\pi_1$-injective in $M$. It then follows
from
(\ref{relopen}) that} $\enddatamap_{_\proj}:\devGX(M,\proj)\to\enddata(M,\mathcal B,\proj)$ is open
using
the geometric topologies in domain and codomain.
Hence the restriction $\enddatamap_{_\proj}:\devGX_e(M,\proj)\to\enddata_e(M, \mathcal B,\proj)$ is also open with these topologies. 
Thus it is open using the strong geometric topology (which is finer than the geometric topology) 
on the codomain and the geometric topology on the domain.
The {\em end geometric topology} on $\devGX_e(M,\proj)$ is {\edit defined to be} the smallest refinement of the geometric topology
such that $\enddatamap_{_\proj}$ is continuous. Then $\enddatamap_{_\proj}$ is open and continuous
with the end geometric topology on the domain and the strong geometric topology on the codomain.  This completes
the first step.

As usual we will assume that ${\mathcal B}=B$
 is connected.  It suffices to show that $\devGX_{ce}(M,\proj)$ is open in $\devGX_e(M,\proj)$ with respect to the end geometric
topology. A neighborhood $\mathcal U\subset \devGX_e(M,\proj)$ of $\dev_{\rho}$ in this topology consists of all developing maps $\dev_{\sigma}$ that are nearby in 
$C^{\infty}_w(\widetilde{M},\RPn)$ and in addition have the property that $\dev_{\sigma}(\widetilde B)$ is close  to $\dev_{\rho}(\widetilde B)$ in 
$\domains$.

 Suppose $\dev_{\rho}\in\devGX_{ce}(M,\proj)$ has holonomy $\rho$ and $\dev_{\sigma}\in\mathcal U$
 has holonomy $\sigma$.   The corresponding projective structures (charts)
 on $M$ are  denoted by $M_{\rho}$ and $M_{\sigma}$. We must show $\dev_{\sigma}\in\devGX_{c}(M,\proj)$.
  To do this we construct a complete convexity function on the tautological bundle $\xi M_{\sigma}$.
  It then follows that $M_{\sigma}$ is properly convex by (\ref{RFhessianisconvex}).

   In this sketch various manifolds should be replaced by
   the corresponding tautological line bundles, but for ease of exposition we do not do this. 
    There are convexity functions for $M_{\rho}$ and $B_{\sigma}$.
 If $\rho$ is close to $\sigma$ then there is a diffeomorphism $M_{\rho}\rightarrow M_{\sigma}$ 
  that is close to the identity  over a large compact set $K$ whose complement is far out in the cusp $B$.
  The convexity function on $M_{\sigma}$ is obtained by using the one for $M_{\rho}$ over most of $K$,
  and the one for $B_{\sigma}$ outside $K$. We slowly transition from one function to
  the other over $\partial K\times[0,1]$ using a localization function to give
   a convex combination that changes in the $[0,1]$ direction.
    This ends the sketch.

We will use $M_{\rho}$ as a basepoint for $\devc(M)$ as in (\ref{GXbasepoint}), see also (\ref{strongtopdef}).
Thus we replace $M$ by $M_{\rho}$ and will usually omit the subscript $\rho$. Then 
$\widetilde M=\Omega_{\rho}\subset \SS^n$ and
$\dev_{\edit\rho}:\Omega_{\rho}\hookrightarrow\SS^n$  is the inclusion map. Similarly we use $\xi M:=\Cone\Omega_{\rho}/\Gamma$
as a basepoint for $\devGX(\xi M_{\rho},\RA)$ and write this as
$\devGX(\xi M,\RA)$.  Then $\xi\widetilde M_{\rho}=\Cone\Omega_{\rho}\subset\RR^{n+1}$.

We use the Hilbert-Finsler {\edit norm $\|\cdot\|$} on $\xi M$ to calculate operator norms. Recall
$\xi_1M=\xi M/\Phi_1$ is the {\edit tautological circle bundle} and
has an infinite cyclic cover $\xi M$. 
 Let $\kappa=\kappa(\dim(M))>0$ be the lower bound on the Hessian of characteristic functions
 given by (\ref{uniformconvexity}) and $\epsilon=\kappa/10$.
 Let  $R=R(\epsilon,\dim(\xi M))$ be the constant given by  (\ref{closecharfn})
 and $K\subset M$  a compact connected submanifold such that $\xi_1 K$ contains the 
 $R$-neighborhood
  of $\xi_1 A$ in $\xi_1M$.    {\edit Hence
  the characteristic functions $c_{\rho,B}$ and $c_{\rho,M}$ are
    $\epsilon$--close in $C^2(\xi (M\setminus K))$.}

     By (\ref{transitionfn}) there is a localization function $\lambda:\xi_1M \to[0,1]$ 
 with $\lambda(\xi_1 K)=1$ that has support inside a compact connected 
 submanifold $\xi_1 L$. 
 Define $J=\cl(L\setminus K)$. Then every point in $\xi_1 J$ is distance at least 
 $R$ from $\partial (\xi_1B)$.
 All these submanifolds depend on the choice of $\epsilon$. Let $\widetilde{\lambda}:\xi M\to[0,1]$
 be the function that covers $\lambda$. We  abuse notation by writing $\widetilde\lambda$ as $\lambda$. Observe that $\lambda^{-1}(0,1)\subset \xi J$.

{\bf Claim 1.} \emph{There is a convexity function  $c:\xi M\to \R$ 
which equals $c_{\rho,M}$ on $\xi K$ and equals $c_{\rho,B}$ on $\xi(M\setminus L)$ and
$D^2c\ge(\kappa/2)\|\cdot\|^2$.}

\begin{proof}[Proof of Claim 1]
First blend
$c_{\rho,M}$ and $c_{\rho,B}$  inside $\xi J$ using $\lambda$ to get $f:\xi M\to\RR$ given by 
 $$f = \lambda\cdot c_{\rho,M}+(1-\lambda)\cdot c_{\rho,B}= 
 c_{\rho,M}+(1-\lambda)\cdot g$$
 where $g=c_{\rho,B}-c_{\rho,M}$.
The map $f$ is well defined even though $c_{\rho,B}$ is only defined on $\xi B$ because $(1-\lambda)=0$
outside $\xi B$. 

Subclaim:  $D^2f\ge (\kappa/2)\|\cdot\|^2$.  
Outside $\xi J$ this follows from (\ref{uniformconvexity})   since {\edit$f=c_{\rho,M}$ on $\xi K$,
and $f=c_{\rho,B}$} on $\xi (M\setminus L)$.
On $\xi J$ we  show this using directional derivatives. 
 By the product rule $$f''=c''_{\rho,M} +g''- (\lambda'' g + 2\lambda' g'+\lambda g'').$$
Since $M_{\rho}$ is properly convex $c''_{\rho,M}\ge \kappa$ by (\ref{uniformconvexity}).
Also  $|\lambda|,|\lambda'|,|\lambda''|\le 1$ because  $\lambda$ is a localization function
 and  $|g|, |g'|, |g''|<\epsilon=\kappa/10$ on $\xi J$ by definition of $R$ and $K${\edit,}
 so $$|\ g''- (\lambda'' g + 2\lambda' g'+\lambda g'')\ |\le 5\epsilon=\kappa/2.$$

Thus $f''\ge \kappa/2$ which proves the subclaim. 
The level set $S=f^{-1}(0)$ is  Hessian-convex in the
backward direction of the flow and is the $0$-set of a unique flow function $c$ which coincides
with $c_{\rho,B}$ outside $\xi L$.
It follows from (\ref{backwardconvex})  that $c''\ge \kappa/2$ also. This
proves claim 1.
\end{proof}

To avoid a proliferation of notation, and because what we are about to do is similar to
what we just did, we reuse notation as follows. 
We define the new $K$ to be the old $L${\edit,} and the new $\lambda$ is a localization function on $\xi_1 M$
with $\lambda(\xi_1K)=1${\edit,} and the new $L\subset M$ is a compact connected manifold
so that $\xi_1 L$ contains the support of $\lambda$. Then redefine $J=\cl(L\setminus K)$.
Let $E:=\cl(M\setminus K)\subset B$.
Again we write the lift as $\lambda:\xi M\to\RR$.  There are characteristic convexity functions
$c_{\rho,B}:\xi B_{\rho}\to\RR$ and $c_{\sigma,B}:\xi B_{\sigma}\to\RR$.

Since $\xi_1 L_{\rho}$ is compact, 
if $\mathcal U$ is small there is  a diffeomorphism $H:\xi_1 M_{\rho}\to\xi_1 M_{\sigma}$
such that $H|\xi_1 L_{\rho}$ is  close in $C^{\infty}$ to the identity in the following sense. The map $H$ is covered
by $\widetilde H:\xi \widetilde M_{\rho}\to\xi  \widetilde M_{\sigma}$, and the restriction of $\widetilde H$
 is  close to the
inclusion $\xi \widetilde L_{\rho}\hookrightarrow\RR^{n+1}$
in $C^{\infty}_w(\xi\widetilde L_{\rho},\RR^{n+1})$. The map $\widetilde H$ also covers {\edit a map}
$h:\xi M_{\rho}\to\xi M_{\sigma}$.

Set $g=(c_{\sigma,B})\circ h-c_{\rho,B}:\xi E_{\rho}\to\RR$. {\edit If $\Ucal$ is small enough
then by (\ref{nearbycharfn}) $c_{\sigma,B}$ and $c_{\rho,B}$ are close on $\xi J_{\rho}$. Since $\widetilde H$ is close to the identity map, and covers $h$, if follows that}
$\|D^k g \| <\epsilon$ for $k\in\{0,1,2\}$ everywhere on $\xi_1J_{\rho}$.
Define
 $f:\xi M_{\rho}\to\R$ by 
 $$f = \lambda\cdot c+(1-\lambda)\cdot (c_{\sigma,B})\circ h.$$
As before this is well defined.  {\bf Claim 2.} \emph{$f''\ge \kappa/2$ on $\xi L_{\rho}$.}

\begin{proof}[Proof of Claim 2]
When $\lambda=1$ then $f''=c''\ge\kappa/2$ by claim 1. 
The set where $\lambda < 1$ is contained in $\xi J_{\rho}$.
On $\xi J_{\rho}$ and $c=c_{\rho,B}$  so
$$f=\lambda\cdot c_{\rho,B}+(1-\lambda)\cdot(c_{\sigma,B})\circ h= c_{\rho,B}+(1-\lambda)\cdot g$$
and $$f''=c_{\rho,B}'' +g''- (\lambda''\cdot g + 2\lambda' g'+g'').$$
Then $c_{\rho,B}''\ge \kappa/2$ by (\ref{uniformconvexity}). As before $|\lambda|,|\lambda'|,|\lambda''|\le 1$ 
and by the above $|g|,|g'|,|g''|<\epsilon$. Since $\epsilon<\kappa/10$ this proves claim 2.
\end{proof}

Since $\widetilde H$ is  close to the inclusion in $C^{\infty}_w(\xi \widetilde L_{\rho},\RR^{n+1})$ it follows that
 $f\circ h^{-1}$ is Hessian-convex on  $\xi L_{\sigma}$. 
Outside this set $f\circ h^{-1}=c_{\sigma,B}$ which is Hessian-convex. This proves $f:\xi M_{\sigma}\to\RR$ is
Hessian-convex everywhere. 

Again it follows from  (\ref{backwardconvex}) that there is a Hessian-convex
flow function $c_{\sigma}:\xi M_{\sigma}\to\RR$ defined by $f\circ h^{-1}$.
 The corresponding Hessian metric on $\xi_1M_{\sigma}$ is complete because 
$\xi_1 L_{\sigma}$ is compact so the metric is complete on $\xi_1 L_{\sigma}$, and 
outside $\xi_1 L_{\sigma}$ it is the complete metric given by the properly convex end $\xi_1B_{\sigma}$.
It follows that the Hessian metric on $\xi M_{\sigma}$ is also complete.
 This completes
the proof when $M$ has no boundary.

Now suppose $M$ has (compact) boundary and set $P=\interior(M)$.
Then $P$ is properly convex with  a characteristic convexity function $c:\xi P_{\rho}\to\RR$. 
 The idea is to shrink $M$ a bit to obtain a submanifold $N\subset P$ with Hessian-convex boundary.
The restriction to $N$ of the convexity function for $P$ can be used in the above arguments. It is a complete
metric with $\partial N$ at a finite distance.

By (\ref{smoothbdry}) there is a submanifold $N\subset M$
with Hessian-convex{\edit,} compact boundary such that $\cl(M\setminus N)$ is a collar of $\partial M$.
 The restriction of $c$ to $\xi N$ is a complete convexity function.
There is a diffeomorphism $F:\xi M\to \xi N$ close to the identity in $C^2$
 that is the identity outside a small collar of $\partial({\edit\xi}M)$.
Then $c_{\rho,M}:=(c|_{\xi N})\circ F:\xi M\to \R$ is a complete convexity function. The
 pullback of the restriction to $\xi N$ of the Hilbert metric on $\xi P$ 
is a complete metric on $\xi M$. The proof now proceeds as above to construct
a complete convexity function on $\xi M_{\sigma}$.
\end{proof}

To apply (\ref{extenddef}) involves finding deformations of the cusps that are nearby in the
strong geometric topology. This involves finding a diffeomorphism from the original cusp
to the deformed cusp that is close to projective. To make this task easier we 
show such a map exists for a small deformation of the holonomy
if the deformed domain is close to the original domain. 

The {\em projective Kleinian group space} for a smooth manifold $M$ is
 $$\Kleinian(M)=\{(\Omega,\rho)\ \in \domains\times\Rep(M):\ M\ {\rm diffeomorphic\ to\ } \Omega/\rho(\pi_1M)\}$$
with topology given by the subspace topology
  of the product topology on $\domains\times\Rep(M)$. 
  This topology is given by a metric. {\edit If $(\Omega,\rho)\in\Kleinian(M)$ then 
  $\Omega/\rho(\pi_1M)$ is a properly convex manifold with strictly{\edit-}convex boundary.
   If $M$ is closed then $\rho$ determines $\Omega$.}
   There is a natural map
 $$\Kleinian:\devGX_c(M,\proj)\to\Kleinian(M)$$ 
 given by $\Kleinian(\dev)=(\dev(\widetilde{M}),\hol(\dev))$. 
 
\begin{proposition}\label{kleinianendopen} Suppose $M\cong
\partial M\times[0,\infty)$ is a connected smooth manifold and $\partial M$ is
compact. 
{\edit  Then 
 $\Kleinian$ is a continuous open map for the {\em strong} geometric topology on $\devGX_c(M,\proj)$.}
\end{proposition}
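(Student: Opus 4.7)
The plan is to establish openness of $\Kleinian$ by producing, for each $(\Omega_\sigma,\sigma) \in \Kleinian(M)$ sufficiently close to $(\Omega_\rho,\rho) := \Kleinian(\dev_\rho)$, a developing map $\dev_\sigma \in \devGX_c(M,\proj)$ close to $\dev_\rho$ in the strong geometric topology with $\Kleinian(\dev_\sigma) = (\Omega_\sigma,\sigma)$. I choose $\dev_\rho$ as basepoint (\ref{GXbasepoint}), so that $\widetilde M = \Omega_\rho$, $\Gamma = \rho(\pi_1 M)$, and $\dev_\rho$ is the inclusion $\Omega_\rho \hookrightarrow \SS^n$. The hypothesis $M \cong \Omega_\sigma/\sigma(\pi_1 M)$ furnishes some developing map $\widehat\dev \colon \widetilde M \to \SS^n$ with image $\Omega_\sigma$ and holonomy $\sigma$, but $\widehat\dev$ need not be close to $\dev_\rho$. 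The task reduces to constructing a $(\rho,\sigma)$-equivariant diffeomorphism $\widetilde h \colon \Omega_\rho \to \Omega_\sigma$ that is $C^\infty_w$-close to the inclusion on compact sets; then $\dev_\sigma := \widetilde h$ is a developing map with holonomy $\sigma$ and image $\Omega_\sigma$, and the image closeness required by the strong geometric topology is given by hypothesis.

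The key observation is that $M \cong \partial M \times [0,\infty)$ with $\partial M$ compact, so the inclusion $\partial M \hookrightarrow M$ is a homotopy equivalence and $\Gamma$ acts cocompactly on the preimage $P_\rho \subset \partial\Omega_\rho$ of $\partial M$. Because $\Omega_\sigma$ is Hausdorff-close to $\Omega_\rho$ in $\mathcal C$, the corresponding preimage $P_\sigma \subset \partial\Omega_\sigma$ is close to $P_\rho$. I construct $\widetilde h$ in two stages. First, on a compact fundamental domain $D$ for $\Gamma$ inside a collar neighborhood $U_\rho$ of $P_\rho$ in $\Omega_\rho$, I define $\widetilde h$ restricted to $D$ as a smooth perturbation of the inclusion with image in the corresponding collar of $P_\sigma$, then extend by equivariance to all of $U_\rho$ using a $\Gamma$-equivariant partition of unity. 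Second, given any compact $K \subset \interior \widetilde M$, I enlarge $U_\rho$ to be the lift of a sufficiently large compact collar $\partial M \times [0,T] \subset M$ so that $K \subset U_\rho$; the extension of $\widetilde h$ into the remaining deep end is then cosmetic and can be achieved by a partition-of-unity blend with $\widehat\dev$ (pre-composed by an appropriate equivariant diffeomorphism of $\widetilde M$), supported outside $K$.

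The main technical obstacle is producing the equivariant near-identity map on the fundamental domain $D$ so that its $\Gamma$-equivariant extension is smooth and $C^\infty_w$-close to the inclusion. This requires upgrading the Hausdorff closeness of $P_\sigma$ to $P_\rho$ to $C^\infty$ closeness on compact pieces, which follows from the strict convexity of the respective boundaries together with an implicit-function-theorem argument. The compactness of $\partial M$ is essential: it provides uniform control over the perturbation on every $\Gamma$-translate of $D$. Once this near-boundary construction is in place, the extension into the end is straightforward, since arbitrary compact subsets of $\interior \widetilde M$ lie in some $U_\rho$ for sufficiently large $T$, and $\widetilde h$ need only interpolate between the near-boundary map and $\widehat\dev$ in a region disjoint from $K$.
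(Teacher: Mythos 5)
Your overall target is the right one---produce $\dev_\sigma\in\devGX_c(M,\proj)$ close to $\dev_\rho$ with $\Kleinian(\dev_\sigma)=(\Omega_\sigma,\sigma)$, and the paper likewise reduces to finding a diffeomorphism $M\to Q:=\Omega_\sigma/\sigma(\pi_1M)$ that is almost projective on large compacta---but the route you take has a fatal gap at its central step. You build $\widetilde h\colon\Omega_\rho\to\Omega_\sigma$ by perturbing the inclusion near $\partial\Omega_\rho$, and for this you must ``upgrade the Hausdorff closeness of $P_\sigma$ to $P_\rho$ to $C^\infty$ closeness'' using strict convexity and an implicit-function-theorem argument. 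No such upgrade exists. The topology on $\Kleinian(M)$ controls $\overline{\Omega_\sigma}$ and $\overline{\Omega_\sigma}\setminus\Omega_\sigma$ only in the Hausdorff sense, and Hausdorff-close strictly convex hypersurfaces need not be $C^2$-close: the graphs of $x\mapsto x^2$ and $x\mapsto x^2+\epsilon\sqrt{x^2+\epsilon^2}$ are smooth, strictly convex, and $O(\epsilon)$-Hausdorff-close on compacta, yet their second derivatives at $0$ differ by $1$ for every $\epsilon>0$. Hence no $\delta$-neighborhood of $(\Omega_\rho,\rho)$ forces $\partial\Omega_\sigma$ to be $C^2$-close to $\partial\Omega_\rho$, there is no well-defined ``corresponding collar'' of $\partial\Omega_\sigma$ onto which to perturb, and your collar map cannot be guaranteed to be a diffeomorphism close to the inclusion. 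The paper obtains the needed $C^\infty$ control from an entirely different source: it applies openness of holonomy (\ref{deformGX}) to the compact collar $W=\partial M\times[0,k+1]$ to get a developing map with holonomy $\sigma$ that is $C^\infty$-close to $\dev_\rho$ there, and then uses (\ref{developtrick}) and (\ref{isotopsubstructure}) to see that this map factors through a genuine projective embedding $f\colon N\to Q$. The closeness comes from $\sigma$ being close to $\rho$, not from $\Omega_\sigma$ being close to $\Omega_\rho$.

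Second, you dismiss the extension into the rest of $Q$ as cosmetic, but that is where the hypothesis $M\cong\partial M\times[0,\infty)$ actually does its work. Even with a projective embedding of the compact collar $N$ into $Q$, one must show that the region $P$ of $Q$ between $\partial Q$ and the image of the inner boundary $B$ of $N$ is a compact product $\cong\partial Q\times I$, so that $f$ extends to a diffeomorphism $M\to Q$ inducing the correct identification of fundamental groups; a priori $f(N)$ could sit inside $Q$ leaving no product structure. The paper handles this with a homological separation argument, the persistence of Hessian-convexity of $B$ (via \ref{smoothbdry}), a nearest-point retraction in the Hilbert metric of $Q$, and the smooth $M\times I$ theorem. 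Nothing in your proposal substitutes for these steps. Relatedly, blending two developing maps by a partition of unity, as in your deep-end step, need not yield a local diffeomorphism, and the ``appropriate equivariant diffeomorphism of $\widetilde M$'' you pre-compose with $\widehat{\dev}$ is precisely the object whose existence is in question: the definition of $\Kleinian(M)$ only supplies some diffeomorphism $M\to\Omega_\sigma/\sigma(\pi_1M)$, not one compatible with the identification of $\pi_1M$ with $\sigma(\pi_1M)$ given by $\sigma$.
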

 \begin{proof} {\edit Continuity is obvious.} Suppose $\dev_{\rho}\in\devGX_c(M,\proj)$ and $\Kleinian(\dev_{\rho})=(\Omega_{\rho},\rho)$
 and that $(\Omega_{\sigma},\sigma)\in\Kleinian(M)$ is  close. Then $Q=\Omega_{\sigma}/\sigma(\pi_1M)$
 is a properly convex manifold. We identify $M\equiv\Omega_{\rho}/\rho(\pi_1M)$.
 It suffices to show there is a diffeomorphism $M\to Q$ which is almost a projective map
 between large compact sets in  the interiors.
   
{\edit By (\ref{smoothbdry})}  there is a diffeomorphism $M\cong\partial M\times[0,\infty)$ so
that $\partial M\times t$ is $\dev_{\rho,M}$-Hessian-convex for all $0<t\le 1$.
For $k>1$ define  
$N=\partial M\times [1/k,k]$ and $W=\partial M\times[0,k+1]$ and ${\edit E}=\partial M\times 1/k$.  
{\edit These are all compact}.
  By (\ref{deformGX}) there is a $\dev_{\sigma,W}\in\devGX(W,\proj)$
  with  holonomy $\sigma$  that is  close to $\dev_{\rho,M}{\edit|W}$ over a compact set in $\widetilde W$
  that covers $N$. 
  
  By (\ref{isotopsubstructure}) we may change $\dev_{\sigma,W}$ by a small isotopy so that
there is a projective embedding $f:N\to Q$.
 If $\sigma$ is close enough to $\rho$  then, since  {\edit the hypersurface} ${\edit E}$ 
 is Hessian-convex for $\dev_{\rho,M},$ it follows that
 ${\edit E}$ is also Hessian-convex for $\dev_{\sigma,W}$. Hence $f({\edit E})$ is Hessian-convex in $Q$.

 Let $P$ be the closure of the component of $Q\setminus f(N)$ that contains $\partial Q$. Since
 $\partial M$ is compact, for homology reasons $f({\edit E})$ separates $\partial Q$ from the end
 of $Q$, thus $P$ is compact
 and $\partial P=\partial Q\sqcup f({\edit E})$. 
{\bf Claim:} $P$ is diffeomorphic to ${\edit E}\times I$. 
   
  {\edit Since $E$ is $\dev_{\sigma,N}$-Hessian-convex there is a nearest point retraction (using the Hilbert metric on $Q$) $r:P\to {\edit E}$ with fibers that are lines{\edit,}
   and this
 gives a homeomorphism $P\to {\edit E}\times I$.  
 By \cite{WH} smooth manifolds are PL.
  The {\em $M\times I$ theorem}, Hirsch and Mazur \cite{HM}, says that if $M$ is a PL manifold, then every smoothing of $M\times I$ is diffeomorphic to a product.
Thus  $P$ is diffeomorphic to $E\times I$,
  which proves the claim.}

It follows that $P$ is a collar of $\partial Q$ so $R=P\cup f(N)\cong {\edit E}\times[0,k]$ is also
a collar of $\partial Q$.
Thus $Q'=\cl(Q\setminus R)$ is diffeomorphic to ${\edit E}\times[k,\infty)$. 
 Clearly $P$ lies in
 a small neighborhood of $\partial Q$.
 We can now extend $f$ to a diffeomorphism $f:M\to Q$
 by sending $\partial M\times[0,1/k]$ to $P$ and $\partial M\times[k,\infty)$
 to $Q'$.  This is close to a projective map on $N$. Define $\dev_{\sigma,M}:\widetilde{M}\to\RPn$
 by $\dev_{\sigma,M}=\dev_{\sigma,Q}\circ \widetilde f$. Since $f$ is close to projective over $N$ 
 it follows that  $\dev_{\sigma,M}$ is close to $\dev_{\rho,M}$.
  \end{proof}

Suppose $M=A\cup\mathcal B$ is a smooth manifold with (possibly empty) boundary and
$A$ is a compact submanifold of $M$ with $\partial A=\partial M\sqcup\partial\mathcal B$. {\edit Suppose} 
$\mathcal B=B_1\sqcup\cdots \sqcup B_k$ has $k$ connected components, and $B_i\cong\partial B_i\times[0,\infty)$.
Define the {\em Kleinian relative-holonomy space}
\begin{equation}\label{relkleinholdef}
\enddata(M,\mathcal B,\Kleinian)\subset \Rep(\pi_1M)\times\prod_{i=1}^k \Kleinian(B_i)\end{equation} to be the subset of all
 $(\rho,(\Omega_1,\rho_1),\cdots,(\Omega_k,\rho_k))$ such that $\rho_i=\rho|\pi_1B_i$. This space
 has the subspace topology of the product topology.

For each $B_i\subset M$ we fix a choice of some component $\widetilde{B}_i\subset\widetilde{M}$
of the preimage $B_i$ in 
the universal cover of $M$.
Then $\Omega_i=\dev(\widetilde{B}_i)$ and $\Gamma_i=\hol(\pi_1B_i)$
gives a point in $\Kleinian(B_i)$. This defines the {\em Kleinian relative holonomy map} 
$$\enddatamap_{\Kleinian}:\devGX_{ce}(M,\proj)\longrightarrow \enddata(M,\mathcal B,\Kleinian)$$

\begin{theorem}[Convex Extension Theorem]\label{kleinianedndatamapopen}
$\enddatamap_{\Kleinian}:\devGX_{ce}(M,\proj)\to\enddata(M,\mathcal B,\Kleinian)$ is 
open using
the  geometric topology on the domain.
\end{theorem}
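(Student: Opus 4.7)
The plan is to factor $\enddatamap_{\Kleinian}$ as a composition of two open maps whose openness is already available. Concretely, I would look at
$$\devGX_{ce}(M,\proj) \xrightarrow{\enddatamap_{\proj}} \enddata_e(M,\mathcal B,\proj) \xrightarrow{\Psi} \enddata(M,\mathcal B,\Kleinian),$$
where the intermediate space is given the subspace topology of $\Rep(\pi_1 M)\times\prod_i\devGX_c(B_i,\proj)$ using the \emph{strong} geometric topology on each end factor, and $\Psi$ acts as the identity on the representation coordinate and as $\Kleinian$ on each end developing map. Theorem \ref{extenddef} tells me the first map is open (geometric topology on the domain, strong geometric on the codomain). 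Proposition \ref{kleinianendopen} tells me $\Kleinian:\devGX_c(B_i,\proj)\to\Kleinian(B_i)$ is open on each end (again from the strong geometric topology), so $\Psi$ is open componentwise. The target $\enddatamap_{\Kleinian}$ is then $\Psi\circ\enddatamap_{\proj}$.

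To verify openness of the composition rigorously, I would argue at interior points. Given open $\mathcal U\subseteq\devGX_{ce}(M,\proj)$ and a point $p=(\rho,(\Omega_1,\rho_1),\dots,(\Omega_k,\rho_k))\in\enddatamap_{\Kleinian}(\mathcal U)$, pick $\dev_\rho\in\mathcal U$ mapping to $p$. By Theorem \ref{extenddef}, $\enddatamap_{\proj}(\mathcal U)$ is open in $\enddata_e$, so it contains a basic neighborhood of $(\rho,\dev_{\rho,B_1},\dots,\dev_{\rho,B_k})$ of the form
$$(\mathcal N_\rho \times \mathcal V_1 \times \cdots \times \mathcal V_k)\cap \enddata_e,$$
with $\mathcal N_\rho$ a neighborhood of $\rho$ in $\Rep(\pi_1 M)$ and $\mathcal V_i$ a strong-geometric neighborhood of $\dev_{\rho,B_i}$ in $\devGX_c(B_i,\proj)$. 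By Proposition \ref{kleinianendopen} each $\Kleinian(\mathcal V_i)$ contains a neighborhood $\mathcal M_i$ of $(\Omega_i,\rho_i)$ in $\Kleinian(B_i)$.

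To finish, for any $p'=(\sigma,(\Omega'_1,\sigma_1),\dots,(\Omega'_k,\sigma_k))$ with $\sigma\in\mathcal N_\rho$ and $(\Omega'_i,\sigma_i)\in\mathcal M_i$, I choose lifts $\dev'_i\in\mathcal V_i$ with $\Kleinian(\dev'_i)=(\Omega'_i,\sigma_i)$. The compatibility condition $\sigma_i=\sigma|\pi_1 B_i$ built into $\enddata(M,\mathcal B,\Kleinian)$ forces $\hol(\dev'_i)=\sigma|\pi_1 B_i$, so the tuple $(\sigma,\dev'_1,\dots,\dev'_k)$ lies in the basic neighborhood above, hence in $\enddatamap_{\proj}(\mathcal U)$. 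Pulling back through $\enddatamap_{\proj}$ produces $\dev_\sigma\in\mathcal U$ with $\enddatamap_{\Kleinian}(\dev_\sigma)=p'$, showing $p$ is interior to $\enddatamap_{\Kleinian}(\mathcal U)$. I do not expect a genuine obstacle here: Theorem \ref{extenddef} and Proposition \ref{kleinianendopen} were designed precisely to compose, and the only bookkeeping is to keep the three topologies straight (geometric on the domain, strong geometric on the intermediate end factors, Hausdorff-based on $\Kleinian(B_i)$) and to note that the compatibility conditions cutting out both $\enddata_e$ and $\enddata$ match under $\Psi$.
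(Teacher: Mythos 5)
Your proposal is correct and follows exactly the paper's route: the paper's proof of (\ref{kleinianedndatamapopen}) is the single line ``this follows immediately from (\ref{extenddef}) and (\ref{kleinianendopen}),'' and you have simply made explicit the factorization through $\enddata_e(M,\mathcal B,\proj)$ and the bookkeeping of topologies and compatibility conditions that the paper leaves implicit. No gap; your expansion is a faithful unpacking of the intended argument.
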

\begin{proof}  Follows immediately from (\ref{extenddef}) and (\ref{kleinianendopen}).\end{proof}

\begin{proof}[Proof of (\ref{extendpc})] The map $\gamma:(-1,1)\to\enddata(M,\mathcal B,\Kleinian)$
defined by
$$\gamma(t)=(\rho_t,(\Omega_1(t),\rho_t|_{\pi_1B_1}),\cdots,(\Omega_k(t),\rho_t|_{\pi_1B_k}){\edit)}$$
{\edit is continuous by hypothesis (5) of (\ref{extendpc}).}
{\edit By (\ref{kleinianedndatamapopen})}  $\enddatamap_{\Kleinian}$ is open{\edit,} and $\gamma(0)\in\image(\enddatamap_{\Kleinian})${\edit, thus}
for some $\epsilon>0$ that ${\edit\gamma}(-\epsilon,\epsilon)\subset\image(\enddatamap_{\Kleinian})$.
So for $|t|<\epsilon$ there is $\dev_t\in\devGX_{ce}(M,\proj)$ with $\enddatamap_{\Kleinian}(\dev_t)=\gamma(t)$.
Define $M_t$ to be the projective structure on $M$ defined by $\dev_t$. Then $M_t$ is properly
convex{\edit, with holonomy $\rho_t$, and $\partial M_t$ is strictly-convex}. {\edit  Moreover the projective structure
on $M_t$ restricted to $B_i$ 
is diffeomorphic to} $P_i(t)$ by definition of $\enddatamap_{\Kleinian}$.
\end{proof}

\section{Generalized Cusps}\label{gencusps}
 A generalized cusp is a certain kind of properly convex projective manifold. 
 The main result of this section is that
holonomies of generalized cusps with fixed topology  form an open
 subset in a certain semi-algebraic set (\ref{cuspstab}). 
 This follows from the fact that {\em a generalized cusp contains
a homogeneous cusp} (\ref{sttdcusp}). We then prove the main theorem (\ref{mainthm}). 

A cusp in a {\em hyperbolic manifold} viewed as a projective manifold is characterized by being projectively
equivalent to an affine manifold that
has a foliation by strictly{\edit-}convex hypersurfaces that are images of horospheres,
together with  a transverse foliation by parallel lines. This characterization does not work in general. 
Consider the affine manifold $M=U/\Gamma\cong T^2\times[0,\infty),$ where 
$$U=\{(x_1,x_2,x_3):  x_3\ge x_1^2+x_2^2>0\ \}$$
and $\Gamma$ is the cyclic group generated by $(x_1,x_2,x_3)\mapsto (2x_1,2x_2,4x_3)$. 
 It has a foliation by  tori that are the images of the strictly{\edit-}convex
  hypersurfaces $z=K(x^2+y^2)$ for $K\ge 1,$ and it has a transverse foliation by
  vertical lines.  However $M$ is {\em not convex}.

\begin{definition}\label{gencusp} A {\em generalized cusp} is a properly convex manifold
 $C=\Omega/\Gamma$ homeomorphic
to $\partial C\times[0,\infty)$ with $\partial C$ a closed manifold and $\pi_1C$ virtually nilpotent
 such that $\partial\Omega$ contains no line segment,  i.e.\thinspace $\partial C$ is strictly{\edit-}convex.
The group $\Gamma$ is called a {\em generalized cusp group}. 

A {\em quasi-cusp}  is a properly convex manifold  with interior
homeomorphic
to $Q\times\RR${\edit,}  and $Q$  is a closed manifold{\edit,} and 
$\pi_1Q$ {\edit is} virtually nilpotent.

\end{definition}

If $\Gamma$ contains no hyperbolics,  then $C$ is called a {\em cusp} and $\Gamma$ is
conjugate to a subgroup of $PO(n,1)$ by Theorem (0.5) in \cite{CLT1}. An example of a quasi-cusp is 
$\Delta/\Gamma$ for any
discrete subgroup $\Gamma\cong\ZZ^{n-1}$ of the diagonal group in $SL(n+1,\RR),$
where $\Delta\subset\RPn$ is the interior of an $n$-simplex that is preserved by $\Gamma$.

\begin{definition} A generalized cusp $\Omega/\Gamma$ is {\em homogeneous} if $\PGL(\Omega)$ acts transitively on $\partial\Omega$. The group $\PGL(\Omega)$ is called
a {\em (generalized) cusp Lie group}.
 \end{definition}
 For example a cusp in a hyperbolic manifold is homogeneous if and only if it is
the quotient of a  horoball $\Omega\subset\mathbb H^n$. 
 In this case  $\PGL(\Omega)$ is conjugate to the 
  subgroup of $PO(n,1)\cong Isom({\mathbb H}^n)$ that
 fixes one point at infinity.  Cusp Lie groups for 3--manifolds are listed in section \ref{3mfd}.

 \begin{theorem}\label{sttdcusp} Every generalized cusp 
contains a homogeneous generalized cusp. 
 \end{theorem}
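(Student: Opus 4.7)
The strategy is essentially outlined in the discussion between (\ref{homgcuspsthm}) and (\ref{sttdcusp}); the plan is to execute it. First I would pass to the orientation double cover if necessary, so that $C$ is orientable and maximal, giving $H_{n-1}(C;\ZZ)\cong\ZZ$. I would then invoke (\ref{transgrp}) to produce a finite index subgroup $\Gamma_0<\Gamma$ that sits as a cocompact lattice in a connected Lie group $T=T(\Gamma)\subset\PGL(n+1,\RR)$ conjugate into the upper triangular group. Since $T$ is constructed intrinsically from $\Gamma_0$ (as the identity component of an appropriate Zariski or real-algebraic closure), the full group $\Gamma$ normalizes $T$. In particular $T$ preserves $\cl(\Omega)$, and $\Gamma$ permutes the $T$-orbits in $\cl(\Omega)$.

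Next I would apply (\ref{convexTorbit}) to produce a point $p\in\Omega$ whose $T$-orbit $S=T\cdot p$ is a strictly convex, smoothly embedded hypersurface. The dimension count works out because $\dim T=\dim\partial C=n-1$, so a generic orbit is a hypersurface; strict convexity is inherited from the fact that $\partial\Omega$ contains no line segment, combined with homogeneity of the orbit under $T$ (a single line segment would propagate to a foliation by segments, contradicting strict convexity in the limit).

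Now let $\Omega_T=\CH(S)$ be the convex hull. Because $T$ preserves $S$, it preserves $\Omega_T$, and $\partial\Omega_T=S$ by construction, so $\PGL(\Omega_T)\supseteq T$ acts transitively on $\partial\Omega_T$. To ensure $\Gamma$ preserves $\Omega_T$ I would choose $p$ so that its $T$-orbit is $\Gamma$-invariant: for example, take $p$ to lie on a level set of the characteristic convexity function of $\Omega$ (which is $\Gamma$-invariant by construction, see Section~\ref{charfnssection}), so that every $\Gamma$-translate of $S$ is another $T$-orbit on the same level set, and then average or note that this forces equality of orbits by the uniqueness statement in (\ref{convexTorbit}). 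Then $C'=\Omega_T/\Gamma$ is a properly convex submanifold of $C$ with strictly convex boundary, virtually nilpotent fundamental group, and the same topology $\partial C\times[0,\infty)$—hence a homogeneous generalized cusp inside $C$.

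The hard part will be the production of a strictly convex $T$-orbit, i.e.\ carrying out (\ref{convexTorbit}). One needs to simultaneously show that the orbit is embedded (no self-intersections), that it is a hypersurface (the stabilizer of $p$ is trivial or at least zero-dimensional for generic $p$), and that it is strictly convex as a subset of $\RPn$. The embedding and dimension assertions use that $\Gamma_0$ is a cocompact lattice in $T$ and acts freely on $\Omega$; strict convexity is the subtle point, and the natural approach is to exploit the flow by the one-parameter subgroups of $T$ to show that any line segment on $T\cdot p$ would propagate under $T$ to a line segment on the boundary of the $T$-invariant domain obtained as a limit of rescalings, contradicting strict convexity of $\partial\Omega$.
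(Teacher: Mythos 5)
Your overall strategy is the one the paper follows (pass to the translation group $T=T(\Gamma)$, find a strictly convex $T$-orbit, take its convex hull, and promote invariance from the finite-index subgroup to all of $\Gamma$), but the last step, which is the genuinely delicate one, does not work as you describe it. You propose to force $\Gamma$-invariance of $\Omega_T$ by choosing $p$ on a level set of the characteristic convexity function and arguing that the $\Gamma$-translates of $S=T\cdot p$ are ``$T$-orbits on the same level set.'' This fails for two reasons. First, the characteristic function of $\Omega$ is $\Gamma$-invariant but not $T$-invariant, so $T\cdot p$ is not contained in a level set, and there is no ``uniqueness statement in (\ref{convexTorbit})'' identifying two $T$-orbits that meet a common level set; nor is there an averaging operation on orbits available here. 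Relatedly, your assertion that $T$ preserves $\cl(\Omega)$ is unjustified and false in general: only a well-chosen point has its full $T$-orbit inside $\Omega$, and establishing this already requires a compactness argument. The correct mechanism (Lemma (\ref{virthomogcusp})) is: uniqueness of the virtual e-hull (\ref{ehullunique}) shows $\Gamma$ normalizes $T$ and hence permutes $T$-orbits; the $\Gamma$-orbit of $\Omega_T$ is therefore a finite collection of pairwise disjoint convex sets in $\Omega$; and a separation/homology argument (each translate of $\partial\Omega_T/\Gamma_1$ separates the boundary from the end of a suitable finite regular cover, and disjoint such separating copies cannot coexist) forces this collection to be a single domain.

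Two smaller omissions: you never reduce to a \emph{minimal} cusp via (\ref{mincusplem}), yet minimality is a hypothesis of (\ref{convexTorbit}) and is what guarantees (via (\ref{gencuspcontainsRFcusp}) and (\ref{RFendlemma})) that $\Omega$ avoids every $\Gamma$-invariant hyperplane and sits as a closed convex set in an affine patch --- without this the orbit and convex-hull arguments have nowhere to live. And your parenthetical ``a generic orbit is a hypersurface'' glosses over the possibility of positive-dimensional stabilizers; the paper instead takes an extreme point $y$ of $\cl(\CH(T\cdot x))$, uses homogeneity to get strict convexity of $S=T\cdot y$ everywhere, and deduces that $S$ is all of $\partial\Omega_T$ (hence a hypersurface) from a dimension count on the quotient cusp. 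If you are treating (\ref{convexTorbit}) as a black box these last points are absorbed into that citation, but the $\Gamma$-invariance step is not covered by any result you cite and needs the argument above.
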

 
 There is an equivalence relation on generalized cusps  generated by
the property that one cusp can be projectively embedded in another. Equivalent cusps have conjugate holonomy. 
One can shrink a cusp by removing a collar from the boundary. However
sometimes one can remove a submanifold at the other end. For example
there might  be a totally geodesic codimension--1 compact submanifold in the interior
of the cusp, which one could cut along. It simplifies matters to do this ahead of time.

\begin{definition} A generalized 
cusp $C$ is {\em minimal} if, for every cusp $C'\subset C$
 with $\partial C'=\partial C$,  it follows that $C=C'$.\end{definition}

  If $M$ is a convex manifold and $X\subset M$ then the {\em convex hull}, $\CH(X)$, of $X$ is the intersection of all convex
 submanifolds of $M$
that contain $X$.
{\edit Suppose
$f:S^1\times(-1,1]\rightarrow C$ is a diffeomorphism, and  $C$ has a  hyperbolic metric} such that $\gamma=f(S^1\times 0)$ is a geodesic, 
and the distance $d(f(e^{i\theta},t),\gamma)=\edit |t|$. {\edit Thus} $C$ is a hyperbolic annulus with
one convex boundary component, and the other boundary component deleted.
 {\edit Moreover $C$ is a generalized cusp}, and $C'=\CH(\partial C)=f(S^1\times{\edit(}0,1])$ is a minimal cusp.

\begin{lemma}\label{mincusplem} Every generalized cusp contains a unique minimal cusp.
A finite cover of a minimal cusp is minimal.
\end{lemma}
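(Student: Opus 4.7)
The plan is to realize the minimal sub-cusp as the closed convex hull of the manifold boundary, and then deduce the finite-cover statement by $\Gamma$-symmetrizing candidate sub-cusps in the cover. Writing $C=\Omega/\Gamma$, I would let $\mathcal F$ denote the family of cusps $C'\subseteq C$ with $\partial C'=\partial C$, each of the form $C'=\Omega'/\Gamma$ where $\Omega'\subseteq\Omega$ is $\Gamma$-invariant, properly convex, and has manifold boundary equal to $\partial\Omega$. Because every $\Omega'\in\mathcal F$ is convex and contains $\partial\Omega$, it contains the closed convex hull of $\partial\Omega$; this closed convex hull is $\Gamma$-invariant (since $\partial\Omega$ is) and a natural candidate. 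I would set
\[
\Omega_0 \;=\; \bigcap_{C'\in\mathcal F}\Omega',
\]
which equals the closed convex hull of $\partial\Omega$ in $\overline\Omega$, is $\Gamma$-invariant, properly convex, and contains $\partial\Omega$.

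The key step, and the main obstacle, is to verify that $C_0=\Omega_0/\Gamma$ is itself a cusp in the sense of Definition~\ref{gencusp}, for then $C_0\in\mathcal F$ and is manifestly the unique minimal element by construction. Proper convexity and virtual nilpotence of $\pi_1C_0=\Gamma$ are automatic; strict convexity of $\partial\Omega_0=\partial\Omega$ is inherited from $C$. The nontrivial point is the product structure $C_0\cong\partial C\times[0,\infty)$. For this I would use the strictly convex boundary to construct a nearest-point retraction of $\Omega_0$ onto the closest lift of $\partial\Omega$ (using the Hilbert metric, which is well behaved by (\ref{boundedcurvature})), producing a foliation of $\Omega_0$ by arcs transverse to $\partial\Omega$ whose quotient by $\Gamma$ gives the required product. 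Contractibility of $\Omega_0$ together with the $\pi_1$-isomorphism $\pi_1\partial C\to\Gamma$, inherited from $C\cong\partial C\times[0,\infty)$, ensures this foliation has the correct topology.

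For the finite-cover assertion, suppose $C$ is minimal and let $\widetilde C=\Omega/\Gamma_0\to C$ be a finite cover with $\Gamma_0<\Gamma$ of finite index. Given any cusp $\widetilde C'=\Omega'/\Gamma_0\subseteq\widetilde C$ with $\partial\widetilde C'=\partial\widetilde C$, I would $\Gamma$-symmetrize by setting
\[
\Omega'' \;=\; \bigcap_{\gamma\Gamma_0\in\Gamma/\Gamma_0}\gamma\Omega',
\]
a \emph{finite} intersection of $\Gamma_0$-invariant properly convex sets, hence $\Gamma$-invariant, properly convex, with manifold boundary $\partial\Omega$. The same verification as above (which is more tractable for a finite intersection, since the transverse foliation from each factor can be combined) shows $C''=\Omega''/\Gamma$ is a cusp in the family $\mathcal F$ for $C$. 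Minimality of $C$ forces $C''=C$, i.e.\ $\Omega''=\Omega$; since $\Omega''\subseteq\gamma\Omega'\subseteq\Omega$ for every coset representative, each $\gamma\Omega'=\Omega$ and therefore $\Omega'=\Omega$, giving $\widetilde C'=\widetilde C$ as required.
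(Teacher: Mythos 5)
Your proof takes essentially the same route as the paper: the unique minimal cusp is realized as the convex hull of $\partial\Omega$, which is $\Gamma$-invariant, properly convex, and has the same boundary $\partial\Omega$. The only divergence is the finite-cover step, where your $\Gamma$-symmetrization is an unnecessary detour: a finite cover of the minimal cusp is $\Omega'/\Gamma_0$ with the \emph{same} domain $\Omega'=\CH(\partial\Omega')$, so any sub-cusp of the cover with the same boundary has convex domain containing $\partial\Omega'$, hence containing $\CH(\partial\Omega')=\Omega'$, and is therefore the whole cover.
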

\begin{proof} Suppose $C=\Omega/\Gamma$ is a generalized cusp.
Let $\Omega'$ be the convex hull of $\partial\Omega$. Then $\Omega'\subset\Omega$ is properly
convex and $\Gamma$-invariant and $\partial\Omega'=\partial\Omega$. 
 The cusp $C'=\Omega'/\Gamma$ is the unique minimal cusp contained in $C$.
 If $M$ is a finite cover of $C'$ then $M=\CH(\partial M)$ so $M$ is also minimal.
\end{proof}

 The following will be used frequently
\begin{lemma}\label{Kpione} Suppose $M$ is a quasi-cusp of dimension $n$, 
and $P\subset M$ is a convex submanifold, and $\pi_1P\rightarrow\pi_1M$ is an isomorphism
then
\begin{itemize}
\item[1)] the universal cover $\widetilde M$ is contractible.
\item[2)] $M$ is an Eilenberg-MacLane space: a $K(\pi_1(M),1)$. 
\item[3)]  $H_{n-1}(\pi_1M;\ZZ_2)\cong H_{n-1}(M;\ZZ_2)\cong\ZZ_2$.
\item[4)] If $\dim P<n$  then  $\dim(P)= n-1$ and $P$ is  a closed manifold. 
\item[5)] If $P\subset\interior(M)$ and $\dim(P)<\dim(M)$ then
 $P$ separates.
 \end{itemize}
\end{lemma}
\begin{proof} 1,2 and 3 follow immediately from the definition.
  Since $P$ and $M$ are convex, they are aspherical. Hence  the inclusion $P\hookrightarrow M$
is a homotopy equivalence.  By (3) $H_{n-1}(P;\ZZ_2)\cong\ZZ_2$ from which (4) follows.

 For (5), it follows from (4) that $\dim P=n-1$. 
By definition of quasi-cusp, the interior of $M$ is homeomorphic to $Q\times\RR$ for some closed manifold $Q$.
Since $P$ and $Q$ are both closed, for sufficiently 
large $t\in\RR$ if follows that $P$ separates $Q\times t$ from $Q\times(-t)$.
\end{proof}

{\edit By (\ref{holonomylifts})}  the holonomy of a projective structure lifts to $\GL(n+1,\R)$, and we will 
 use this lift in what follows.
 
 {\edit If $V$ is a finite dimensional vector space of dimension $n$ then a {\em (complete) flag for $V$}
 is a sequence of subspaces $0=V_0<V_1\cdots<V_n=V$ with $\dim(V_i)=i$.}
 The subgroup $\UT(n)<\GL(n,\RR)$ consists of all upper-triangular matrices with
positive diagonal entries. 
A group $\Gamma\subset\GL(n,\RR)$ is conjugate into $\UT(n)$ if and only if $\Gamma$ preserves
a {\edit flag} and every weight of $\Gamma$ is positive.

 A {\em connected} nilpotent subgroup $\Gamma$
 of $\GL(n,{\mathbb C})$ preserves a {\edit flag} {\edit for $\CC^n$}.
  However if $\Gamma$ is not connected this need not be true.
For example the quaternionic group of order $8$ in $\GL(2,{\mathbb C})$ does not preserve
a flag. First we show (\ref{VnilVFG}) that there is
 a finite index subgroup of  $\Gamma$  that preserves a {\edit flag}.
 The index of a subgroup $H<G$ 
is written $|G:H|$.  A subgroup $H\le G$ is {\em characteristic} if every automorphism of $G$
  preserves $H$. {\edit It is routine to show}
 
\begin{lemma}\label{finindex} $\exists\; h(n,k)\edit >0$
such that if the group $G$ is generated by $k$ elements, then there is a characteristic subgroup
$C\le G$ with $|G:C|\le h(n,k)$ such that {\edit if $H\le G$ is any subgroup with index
 $|G:H|\le n$ then $C\le H$.}\end{lemma}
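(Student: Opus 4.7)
The plan is to take $C$ to be the intersection of all subgroups of $G$ of index at most $n$, and to bound things by using permutation representations on coset spaces.

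First I would show there are only finitely many subgroups of index at most $n$, with an explicit bound. A subgroup $H \le G$ with $|G:H| = m \le n$ determines a transitive left action of $G$ on the coset space $G/H$, hence (after labeling cosets) a homomorphism $\varphi_H \colon G \to S_m$; and $H$ is recovered as the $\varphi_H$-stabilizer of the coset $H$ itself. Since $G$ is generated by $k$ elements, the number of homomorphisms $G \to S_m$ is at most $(m!)^k \le (n!)^k$, and each homomorphism can produce at most $m \le n$ distinct point-stabilizers. Summing over $m = 1, \ldots, n$ shows that the number $N(n,k)$ of subgroups of index at most $n$ satisfies $N(n,k) \le n \cdot n \cdot (n!)^k = n^2 (n!)^k$.

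Next let $C := \bigcap \{ H \le G : |G:H| \le n \}$. Any automorphism $\alpha \in \Aut(G)$ permutes the set of subgroups of index $\le n$ (since $\alpha$ is an isomorphism of $G$ with itself and index is preserved), and therefore fixes $C$ setwise. Hence $C$ is a characteristic subgroup, and by construction $C \le H$ for every $H \le G$ with $|G:H| \le n$.

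Finally I would bound $|G:C|$. This is a standard Poincaré-type estimate: if $H_1, \ldots, H_N$ are subgroups of $G$ each of index at most $n$, then $|G : \bigcap_{i=1}^N H_i| \le \prod_{i=1}^N |G:H_i| \le n^N$, proved by induction using $|G:H_1 \cap H_2| \le |G:H_1|\cdot|G:H_2|$. Applied with $N = N(n,k)$ this yields
\[
|G:C| \;\le\; n^{N(n,k)} \;\le\; n^{\,n^2 (n!)^k} \;=:\; h(n,k),
\]
giving the desired bound. The main (very minor) obstacle is just being careful that the ``characteristic'' claim uses automorphisms of $G$, not merely inner automorphisms; this is transparent from the construction since the defining property ``$|G:H| \le n$'' is intrinsic to $G$.
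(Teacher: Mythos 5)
Your proposal is correct and follows essentially the same strategy as the paper: bound the number of relevant subgroups by counting homomorphisms from the $k$-generated group $G$ into symmetric groups, take the intersection, and control its index by the Poincar\'e bound $|G:\bigcap H_i|\le\prod|G:H_i|$. The only cosmetic difference is that the paper defines $C$ as the intersection of the kernels of all homomorphisms $G\to S_n$ (characteristic because precomposition with an automorphism permutes these homomorphisms), whereas you intersect the index-$\le n$ subgroups themselves; both yield a valid characteristic core with an explicit bound.
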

\if0 \begin{proof} We show   $h(n,k)=(n!)^{(n!^k)}$. 
Let $S$ be the group of permutations of $n$ elements so $|S|=n!$. If
$\theta: G\longrightarrow S$  is a homomorphism, then $|G:\ker\theta|\le |S|$.
The number of such homomorphisms is at most $p=|S|^{k}$. 
 Then $C=\cap\ker\theta$ (where the intersection is over all such homomorphisms)
is a characteristic subgroup of $G$ and $|G:C|\le |S|^p$. 
Suppose $H\le G$ and $m=|G:H|\le n$. Then $G$ permutes the
$m$ left cosets of $H$. This gives a homomorphism $\theta:G\rightarrow S$ and $C\le \ker\theta\le H$.\end{proof}
\fi

Suppose $V$ is a vector space over  $\CC$. A  {\em weight} of a subgroup $\Gamma\subset \GL(V)$ 
 is a homomorphism ({\em character}) $\lambda:\Gamma\rightarrow \CC^*$ such that the 
 {\em weight space} $E(\lambda)$ and {\em generalized
weight space} $V(\lambda)$ are both non-trivial. Here,
$$\displaystyle E(\lambda) = \bigcap_{\gamma\in\Gamma} (\ker (\gamma-\lambda(\gamma))
\quad\text{and}\quad V(\lambda) = \bigcup_{n>0}\  
{\bigcap}_{\gamma\in\Gamma} (\ker (\gamma-\lambda(\gamma))^n.$$ 
A (generalized) weight space is $\Gamma$
invariant. A one-dimensional weight space is the same thing as a one-dimensional $\Gamma$--invariant subspace.
The vector space $V$ has a  {\em generalized weight decomposition} if
 $V=\oplus V(\lambda),$ where the sum is over all weights.
 
The group $\Gamma$ is {\em polycyclic of (Hirsch) length (at most) $k$} if there is a  subnormal series
$\Gamma=\Gamma_k\vartriangleright \Gamma_{k-1}\cdots\vartriangleright \Gamma_{1}\vartriangleright \Gamma_0=1$
with $\Gamma_{i+1}/\Gamma_{i}$ cyclic  for every $i$. 
A subgroup of a polycyclic
group of length $k$ is polycyclic of length  at most $k$.
Every finitely-generated nilpotent group is polycyclic. 

\begin{lemma}\label{onedim}
$\exists\; c=c(n,k)$ such that if
 $\Gamma< \GL({\mathbb C}^n)$ is polycyclic of length at most $k,$ then 
  there is a characteristic subgroup $C\le\Gamma$ with $|\Gamma:C|\le c$
and
$C$ preserves a one-dimensional subspace of ${\mathbb C}^n$.
\end{lemma}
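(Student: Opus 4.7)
The strategy is first to reduce, via Lemma~\ref{finindex}, to finding \emph{any} finite‑index subgroup $H \le \Gamma$ with index bounded in terms of $n$ and $k$ that preserves a $1$-dimensional subspace of $\mathbb{C}^n$: since $\Gamma$ is polycyclic of length at most $k$, it is generated by at most $k$ elements (one per cyclic factor of the subnormal series), so Lemma~\ref{finindex} then produces a characteristic subgroup $C \le H$ of index $\le h(|\Gamma:H|,k)$ in $\Gamma$ preserving the same $1$-dimensional subspace.

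The existence of such an $H$ is a quantitative form of Lie--Kolchin's theorem for polycyclic (hence solvable) subgroups of $\GL(n,\mathbb{C})$, and I would establish it by induction on $n$. The base case $n=1$ is trivial. For $n \ge 2$, if $\Gamma$ preserves a proper nonzero subspace $W \subset \mathbb{C}^n$, applying the inductive hypothesis to $\Gamma|_W \le \GL(W)$ (polycyclic of length at most $k$ in dimension $<n$) yields a finite‑index subgroup preserving a $1$-dimensional subspace inside $W$, and its preimage in $\Gamma$ has the required property.

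If instead $\Gamma$ acts irreducibly on $\mathbb{C}^n$, I use a nested induction on $k$: the subgroup $N := \Gamma_{k-1}$ is normal in $\Gamma$ and polycyclic of length $k-1$, so the inner inductive hypothesis produces a characteristic subgroup $C_N \le N$ of bounded index preserving a $1$-dimensional subspace. Being characteristic in the normal subgroup $N$, $C_N$ is itself normal in $\Gamma$. The common eigenspaces $E_\chi$ of $C_N$ (for characters $\chi\colon C_N \to \mathbb{C}^*$ with nonzero eigenspace) are linearly independent, so there are at most $n$ of them; $\Gamma$ permutes this finite set by conjugation, and the kernel $K$ of the permutation action has $[\Gamma:K] \le n!$ and preserves each $E_\chi$. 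If some $E_\chi$ has dimension strictly less than $n$, the outer inductive hypothesis applied to $K$ acting on $E_\chi$ completes the argument.

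The main obstacle is the subcase in which $C_N$ acts on all of $\mathbb{C}^n$ by scalar matrices, so that a single eigenspace fills $\mathbb{C}^n$ and the eigenspace analysis yields no dimensional reduction. I would handle this by replacing $N$ with the commutator subgroup $M = [\Gamma,\Gamma]$, which is characteristic in $\Gamma$ and, because $\Gamma/\Gamma_{k-1}$ is cyclic (hence abelian), contained in $\Gamma_{k-1}$ and thus polycyclic of length at most $k-1$. Rerunning the eigenspace analysis with $M$ in place of $N$: if the resulting characteristic subgroup $C_M \le M$ also acts by scalars, then $[\Gamma,\Gamma]$ is virtually central in $\GL(n,\mathbb{C})$, so by Schur's theorem $\Gamma/Z(\Gamma)$ is finite and $\Gamma$ is virtually abelian; any abelian subgroup of $\GL(n,\mathbb{C})$ preserves a $1$-dimensional subspace by simultaneous triangularization, and a final application of Lemma~\ref{finindex} produces the characteristic subgroup required.
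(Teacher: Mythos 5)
Your argument is correct in outline but takes a genuinely different route from the paper's. The paper also inducts on $k$ via the normal subgroup $\Gamma_{k-1}$ and its characteristic subgroup $C_{k-1}$ preserving a line $W$, but it never splits into reducible/irreducible cases: it observes that $W$ lies in a weight space $E(\lambda)$ of $C_{k-1}$, that a power $\gamma^m$ ($m\le n$) of the generator of the cyclic quotient preserves $E(\lambda)$ because conjugation permutes the at most $n$ weights, and that an eigenline of $\gamma^m$ inside $E(\lambda)$ is \emph{automatically} $C_{k-1}$-invariant since $C_{k-1}$ acts on $E(\lambda)$ by the scalar $\lambda$. This makes the case $E(\lambda)=\CC^n$ harmless rather than exceptional, so the paper needs neither the dimension-reduction dichotomy nor the commutator-subgroup fallback. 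Your version buys a perhaps more conceptual "reduce the dimension or become scalar" structure at the cost of an extra terminal case; the paper's buys brevity and slightly better index bounds (a factor $m\le n$ where you use $n!$).

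The one step you should tighten is the terminal case. The implication "$[\Gamma,\Gamma]$ is virtually central in $\GL(n,\CC)$, so by Schur's theorem $\Gamma/Z(\Gamma)$ is finite" is not Schur's theorem (which goes the other way), and "virtually central" alone does not bound $\Gamma/Z(\Gamma)$: a priori the scalar part of $[\Gamma,\Gamma]$ could be an infinite subgroup of $\CC^*$. What rescues you is that $[\Gamma,\Gamma]$ is generated by commutators, each of determinant $1$, so $[\Gamma,\Gamma]\subset\SL(n,\CC)$ and its finite-index scalar subgroup $C_M$ lies in the $n$-th roots of unity times the identity; hence $[\Gamma,\Gamma]$ is actually \emph{finite}, of order at most $n\,[M:C_M]$. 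Then the finitely generated converse of Schur's theorem applies concretely: each of the at most $k$ generators has conjugacy class of size at most $|[\Gamma,\Gamma]|$, so $[\Gamma:Z(\Gamma)]\le|[\Gamma,\Gamma]|^k$, which is bounded in terms of $n$ and $k$, and $Z(\Gamma)$ (abelian, hence with a common eigenvector) finishes the argument as you say. With that repair, and noting that your double induction on $(n,k)$ is lexicographically well-founded, all the index bounds are functions of $n$ and $k$ and the reduction through Lemma~\ref{finindex} is used exactly as in the paper.
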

\begin{proof} We use induction on $n$ and $k$. For $n=1$ the result is obvious.
 For $k=1$ the result follows from Jordan normal form with $c=1$.
Assume the result true for $k$. Suppose $\Gamma$ is polycyclic of length $k+1$.
Then $\Gamma$ contains a normal  polycyclic group $\Gamma_k$ of length $k$
with $\Gamma/\Gamma_k$ cyclic. There is a characteristic 
subgroup $C_k\le \Gamma_k$ of
index at most $c(n,k)$ that preserves a one-dimensional subspace $W$.

There is some weight $\lambda:C_k\longrightarrow\CC^*$ 
with $W$ contained in the weight space
$E=E(\lambda)$.
There are at most $n$   weights for $C_k$. If $\theta$ is an automorphism of $C_k$ then 
$\lambda\circ\theta$
is a weight for $C_k$. Since $C_k$ is a characteristic subgroup of $\Gamma_k$, 
and $\Gamma_k$ is normal in $\Gamma$, it follows that $C_k$ is preserved
 by  all inner automorphisms of $\Gamma$. Thus an inner automorphism of $\Gamma$
  permutes these weights,
 so an element $\gamma\in \Gamma$
induces  a permutation   of the weights with order  $m\le   n!$. Choose $\gamma\in \Gamma$ which
 generates $\Gamma/\Gamma_k$. Then $\gamma^m$ induces the identity permutation.
Hence the subgroup $\Gamma'=\langle C_k,\gamma^m\rangle$ preserves $E$. Applying
Jordan normal form to $\gamma^m|E$ gives a one-dimensional subspace of $E$ that is preserved
by $\gamma^m$. This subspace is also preserved by $C_k$. Then 
$|\Gamma:\Gamma'|\le m|\Gamma_k:C_k|\le n !\cdot c(n,k)$ since $m\le n !$. 
By (\ref{finindex}) there is a characteristic subgroup $C\le \Gamma'\le \Gamma$ with 
$|\Gamma:C|\le c(n,k+1)= h( n!\cdot c(n,k),k+1)$.\end{proof}

\begin{proposition}\label{polyVFG} 
$\exists\; d(n,k)$ such that for all  polycyclic groups $G$ of length at most $k$ 
there is  a characteristic subgroup $C\le G$ with $|G:C|\le d(n,k)$
 such that if
$\rho:G\longrightarrow \GL(n,\CC),$ 
then $\rho(C)$ preserves a {\edit flag} in $\CC^n$.
\end{proposition}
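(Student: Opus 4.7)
The plan is to prove this by induction on $n$, using Lemma~\ref{onedim} to get a representation-dependent one-dimensional invariant subspace, then using Lemma~\ref{finindex} to make the subgroup $C$ independent of $\rho$, and finally inducting on the induced $(n-1)$-dimensional quotient.

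The base case $n = 1$ is trivial: take $C = G$. For the inductive step, suppose the result holds for $n-1$ with constant $d(n-1, k)$. Let $G$ be polycyclic of length at most $k$; in particular $G$ is generated by $k$ elements. For \emph{any} representation $\rho \colon G \to \GL(n,\CC)$, the image $\rho(G)$ is polycyclic of length at most $k$, so Lemma~\ref{onedim} yields a characteristic subgroup $K_\rho \le \rho(G)$ of index at most $c(n,k)$ preserving some line $L_\rho \subset \CC^n$. Hence $H_\rho := \rho^{-1}(K_\rho) \le G$ has $|G:H_\rho| \le c(n,k)$ and $\rho(H_\rho)$ preserves $L_\rho$.

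The key step is to replace the $\rho$-dependent subgroups $H_\rho$ by a single subgroup of $G$. By Lemma~\ref{finindex} applied to $G$ with $n$ replaced by $c(n,k)$, there is a characteristic subgroup $C_1 \le G$ with $|G:C_1| \le h(c(n,k), k)$ such that $C_1 \le H$ for \emph{every} subgroup $H \le G$ of index at most $c(n,k)$. In particular $C_1 \le H_\rho$ for every $\rho$, so $\rho(C_1)$ preserves the line $L_\rho$. Note that $C_1$ is defined intrinsically from $G$ alone. Since $C_1$ is a subgroup of the polycyclic group $G$ of length at most $k$, it is itself polycyclic of length at most $k$. Apply the inductive hypothesis to $C_1$: there is $C \le C_1$ with $|C_1 : C| \le d(n-1, k)$ such that for every representation $\sigma \colon C_1 \to \GL(n-1, \CC)$, the image $\sigma(C)$ preserves a complete flag.

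Now, given any $\rho \colon G \to \GL(n, \CC)$, the action of $C_1$ on $\CC^n$ descends to a representation $\bar\rho \colon C_1 \to \GL(\CC^n / L_\rho) \cong \GL(n-1, \CC)$. By the inductive hypothesis $\bar\rho(C)$ preserves a complete flag in $\CC^n / L_\rho$, which lifts to a complete flag in $\CC^n$ beginning with $L_\rho$ and preserved by $\rho(C)$. Setting $d(n,k) := h(c(n,k), k) \cdot d(n-1, k)$ gives $|G : C| \le d(n,k)$ and completes the induction.

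The main obstacle is conceptual rather than computational: Lemma~\ref{onedim} produces a finite-index subgroup depending on the specific representation $\rho$, but we need a subgroup of $G$ that is simultaneously good for all $\rho \colon G \to \GL(n, \CC)$. The trick is to observe that the subgroup produced has bounded index (uniform in $\rho$), so one can use Lemma~\ref{finindex} to replace it with a characteristic subgroup of $G$ contained in every such subgroup. Once this uniformization is achieved, the induction on dimension proceeds routinely.
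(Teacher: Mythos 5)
Your proof is correct and follows essentially the same route as the paper: induction on $n$, using Lemma (\ref{onedim}) to produce an invariant line and Lemma (\ref{finindex}) to replace the representation-dependent subgroup by one depending only on $G$. The only difference is organizational --- the paper runs the induction for a fixed $\rho$ (obtaining a $\rho$-dependent flag-preserving subgroup of index at most $e(n,k)=\prod_{i=1}^n c(i,k)$) and applies Lemma (\ref{finindex}) once at the very end, whereas you uniformize at every inductive step; both give valid explicit (though different) constants $d(n,k)$.
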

\begin{proof} Below we show by induction on $n$ that, for a fixed $\rho$, there is a subgroup of
index at most  $e(n,k)=\prod_{i=1}^n c(i,k)$ that preserves a {\edit flag}. The result follows 
from (\ref{finindex}) with $d(n,k)=h(e(n,k),k)$.

For $n=1$ the result is clear. By (\ref{onedim}) there is a subgroup
$\Gamma'<\Gamma=\rho(G)$
of index at most $c(n,k)$ that preserves a one-dimensional subspace $E\subset V= \CC^n$.
Then $\Gamma'$ acts on $V/E\cong  \CC^{n-1}$. By induction there is $\Gamma''<\Gamma'$
with $|\Gamma':\Gamma''|\le e(n-1,k)$ that preserves a {\edit flag}  $\mathcal F$ in $V/E$. 
The preimage of $\mathcal F$ in $V$, together with $E$, forms a {\edit flag} for $V$
which is preserved by ${\edit\Gamma}''$. Moreover
$|\Gamma:\Gamma''|=|\Gamma:\Gamma'|\cdot |\Gamma':\Gamma''|\le c(n,k)e(n-1,k)=e(n,k)$.
\end{proof}

  \begin{definition} Suppose $G$ is a  finitely-generated, virtually nilpotent group.
 Let   
$k$ the smallest integer such that $G$ is polycyclic of length $k$. Given ${\edit n}>0$
the {\em ${\edit n}$--core of $G$} is the subgroup, $\core(G,{\edit n})$,  of $G$ that is the intersection of all subgroups of $G$ of index at most $2^{\edit n} \cdot d({\edit n},k)$.  \end{definition}

{\edit Clearly $\core({\edit n},G)$  is a characteristic  subgroup of  finite-index in $G$
 that is contained in every  subgroup of index at most $2^n$ in the subgroup $C\le G$ from
(\ref{polyVFG}).} 

\begin{corollary}\label{VnilVFG} Suppose $G$  is a  finitely-generated, virtually nilpotent group and 
$H=\core(G,{\edit n})$. Then for every homomorphism 
$\rho:G\to \GL({\edit n},\FF)$:
\begin{itemize}
\item[(1)] If $\FF=\CC$, then $\rho(H)$ preserve a {\edit flag} in $\CC^{\edit n}$.
\item[(2)] If $\FF=\RR$ and every weight of $\rho(H)$ is real, then  $\rho(H)$ is 
conjugate into $UT({\edit n})$.
{\edit\item[(3)] If $\FF=\RR$ then $\rho(G)\in \VFG$ if and only if every weight of $\rho(H)$ is real.
\item[(4)] $\VFG(G,n)=\{\rho\in\Hom(G,\GL({\edit n},\RR))\ :\ \rho(G)\in\VFG\ \}$  is  semi-algebraic.}
\end{itemize}
\end{corollary}
\begin{proof} {\edit Let $C$ be the characteristic subgroup of $G$
 given by (\ref{polyVFG}). Then $H\le C$ so
(1) follows.}
(2) follows from (1) as follows. Set $U=\RR^{\edit n}$ and $V=U\otimes \CC$
so $G\subset \GL(U)\subset\GL(V)$.
By (1) $V=\oplus V(\lambda)$ 
where $V(\lambda)=\cap_{{\edit c\in C}}\ker(\rho({\edit c})-\lambda({\edit c}))^{\edit n}$.
Observe that $V(\lambda)\subset\CC^{\edit n}$ is given by linear equations that are defined over $\RR$
 because $\lambda({\edit C})\subset\RR$
and $\rho({\edit C})\subset\GL({\edit n},\RR)$.
Thus $V(\lambda)$ is the complexification of  
$U(\lambda)=\cap_{{\edit c\in C}}\ker(\rho({\edit c})-\lambda({\edit c}))^{\edit n}\subset\RR^{\edit n}$
so $U=\oplus U(\lambda)$. Hence $\rho({\edit C})$ preserves a {\edit flag} in $\RR^{\edit n}$.
By replacing $\edit C$ by a {\edit certain} subgroup, {\edit $C_0$, of index at most $2^n$ 
we may ensure that all real weights
are positive. Since $H\le C_0$ it follows that  $\rho(H)$ is conjugate into $\UT(n)$. Clearly (2) implies (3)}. 
{\edit (4)} follows from (2) and the observation that
 {\edit the condition that every} weight {\edit is} real is defined by the semi-algebraic equations that say every eigenvalue of every element of $\rho(H)$ is real.
\end{proof}

Suppose $U$ is a real vector space and $\Gamma< \GL(U)$ preserves a {\edit flag} in 
$V=U\otimes {\mathbb C}$. Then
combining each weight $\lambda$ for $V$
with the complex-conjugate weight $\overline\lambda$  gives a real invariant subspace $U(\lambda,\overline{\lambda})=(V(\lambda)+V(\overline\lambda))\cap U\subset U$ and
 $U=\bigoplus U(\lambda,\overline{\lambda})$. We call $U( \lambda,\overline{\lambda})$ a {\em conjugate generalized weights space.
For each $\gamma\in\Gamma$ the
 eigenvalues of 
 $\gamma|_{U(\lambda,\overline{\lambda})}$ are $\lambda(\gamma)$ and $\overline\lambda(\gamma)$.

\begin{proposition}\label{gencuspisVFG} 
Suppose  $P=  \Omega/\Gamma$  is a quasi-cusp of dimension $n$. 
Then $\core(\Gamma,n+1)$ is conjugate into $\UT(n+1)$.
In particular $\Gamma\in\VFG$.\end{proposition}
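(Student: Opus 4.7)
The plan is to verify the hypothesis of part (2) of (\ref{VnilVFG}) for the inclusion $\rho: \Gamma \hookrightarrow \GL(n+1, \RR)$ (obtained by lifting the projective holonomy via (\ref{holonomylifts}))---namely, that every weight of $\rho(H)$ is real, where $H := \core(\Gamma, n+1)$. Once this reality is established, (\ref{VnilVFG})(2) yields directly that $H$ is conjugate into $\UT(n+1)$, and since $H$ has finite index in $\Gamma$, the statement $\Gamma \in \VFG$ follows.

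Applying (\ref{VnilVFG})(1) to the complexified representation gives that $H$ preserves a complete flag in $\CC^{n+1}$, so $\RR^{n+1} = \bigoplus_\lambda U(\lambda, \overline\lambda)$ decomposes into real conjugate generalized weight spaces as in the text preceding the statement. Assume for contradiction that some weight $\lambda$ is non-real; choose $h_0 \in H$ with $\lambda(h_0) = re^{i\theta}$ and $\theta \notin \pi\ZZ$. Then $U := U(\lambda, \overline\lambda)$ is an $H$-invariant real subspace of even dimension $\geq 2$, and $h_0|_U$ has a nontrivial rotational component in real Jordan form. A first observation is that $\PP(U) \cap \Omega = \emptyset$: otherwise the intersection would be a nonempty open properly convex $h_0$-invariant subset of the projective subspace $\PP(U)$, which is impossible since a nontrivial rotation-scaling of $U$ has no invariant open proper subset of $\PP(U)$.

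The main body of the argument is to derive a contradiction from the rotational component on $U$ using proper discontinuity of the action of $\Gamma$ on $\Omega$. I would let $T$ be the topological closure of $\langle h_0 \rangle$ in $\GL(n+1, \RR)$, and extract inside $T$ a nontrivial compact torus $K$ coming from the rotational part of the semisimple Jordan factor of $h_0$ (suitably normalized to account for the scaling by $r$ on $U$ and by the modulus of the eigenvalues of $h_0$ on other weight spaces). Every element of $T$ preserves $\mathcal C\Omega$, so $K$ preserves $\cl(\mathcal C\Omega)$ and induces a compact projective action on $\cl(\Omega)$. Next I would invoke Koszul--Vinberg theory from Section~\ref{charfnssection}: the characteristic function $\chi_\Omega$ transforms equivariantly under $\GL(\mathcal C\Omega)$, so $K$ preserves the strictly convex characteristic hypersurface $S_\Omega$; hence $K$ acts by isometries of the complete Hessian metric on $\Omega$ (which is bi-Lipschitz to the Hilbert metric by (\ref{boundedcurvature})), and by a Cartan fixed-point argument $K$ fixes some point $p \in \Omega$. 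Finally, a subsequence $h_0^{n_k}$ accumulates projectively on a nontrivial element $k \in K$, so the orbit $h_0^{n_k}(p) \to k(p) = p$ violates proper discontinuity of $\Gamma$'s action on $\Omega$.

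The step I expect to be the main obstacle is the precise extraction of the torus $K$: because $h_0$ may scale $U$ by $r \neq 1$ and act nontrivially on other weight spaces $U(\mu,\overline\mu)$, the topological closure of $\langle h_0 \rangle$ in $\GL(n+1, \RR)$ need not itself be compact. The intended workaround is to pass to $\PGL$ (factoring out positive scalars) or to use the Jordan decomposition $h_0 = h_0^s h_0^u$ and consider the Zariski closure of $\langle h_0^s \rangle$: only the semisimple part contributes to the angular behaviour, and after rescaling by the moduli of its eigenvalues on each generalized weight space, its closure becomes a compact abelian group whose action is a nontrivial rotation on $U$. A secondary subtlety is ensuring the fixed point of $K$ lies in $\Omega$ and not on $\partial\Omega$, but this is guaranteed by the strict convexity of the characteristic hypersurfaces and the completeness of the induced Hessian metric.
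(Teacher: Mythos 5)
Your reduction to showing that every weight of $H=\core(\Gamma,n+1)$ is real is the right first move and agrees with the paper, but the mechanism you propose for excluding a non-real weight --- a contradiction with proper discontinuity extracted from the rotational part of a single element $h_0$ --- cannot work, because it never uses the defining topological feature of a quasi-cusp. Concretely, let $\gamma\in SO(3,1)$ be a loxodromic isometry of $\HH^3$ with nontrivial rotational part (a screw motion), so its eigenvalues are $e^{\ell},e^{i\theta},e^{-i\theta},e^{-\ell}$ with $\theta\notin\pi\ZZ$. Then $\Gamma=\langle\gamma\rangle\cong\ZZ$ is discrete, nilpotent, preserves a complete flag in $\CC^4$, and acts freely and properly discontinuously on the properly convex domain $\HH^3\subset\RP^3$; yet when $\theta/\pi$ is irrational no finite-index subgroup is conjugate into $\UT(4)$, so $\Gamma\notin\VFG$. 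Every hypothesis your argument actually invokes is satisfied here (the quotient is a solid torus, not a quasi-cusp, so the proposition itself is not contradicted), hence some step of your proof must fail. The step that fails is the last one: $h_0^{n_k}$ does not accumulate on a nontrivial element $k\in K$, even projectively. Writing $h_0=e\,k\,u$ with $e$ positive-diagonalizable, $k$ elliptic, $u$ unipotent, all commuting, the factors $e^{n_k}$ and $u^{n_k}$ diverge unless $e=u=1$, i.e.\ unless $h_0$ is itself elliptic --- which is exactly what you may not assume. In the screw-motion example $K$ fixes the axis pointwise, but $h_0^{n_k}$ translates any fixed point $p$ of $K$ off to infinity along the axis, so $h_0^{n_k}(p)\not\to p$ and proper discontinuity is untouched. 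There is also a secondary gap you partly flagged: since $\langle h_0\rangle$ is discrete its topological closure contains no nontrivial compact subgroup, and the proposed substitute does not obviously act on the domain either, because preserving $\cl(\mathcal C\Omega)$ is a Hausdorff-closed rather than Zariski-closed condition, so it is not inherited by the Zariski closure of $\langle h_0^s\rangle$ without further argument.

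The paper's proof proceeds by a genuinely different route that makes the quasi-cusp structure do the work. It splits $V=A\oplus B$ into the sum $A$ of the real generalized weight spaces and the sum $B=\oplus B_j$ of the non-real conjugate ones, and shows via the Brouwer fixed-point theorem that $0$ lies in the convex hull of the $\langle\gamma\rangle$-orbit of any nonzero $B_j$-component of a point of $\mathcal C\Omega$ (otherwise $\gamma$ would have a positive real eigenvalue on $B_j$); averaging the point by the same convex combination kills that component while remaining in the convex $\Gamma$-invariant cone $\mathcal C\Omega$, so some nonzero point of $\mathcal C\Omega$ lies in $A$ and $W=\Omega\cap\PP(A)\ne\emptyset$. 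The contradiction with $B\ne0$ then comes from homology: $\Omega/\Gamma\cong Q\times\RR$ with $Q$ closed forces $H_{n-1}(\Gamma)\cong\ZZ$, while $W/\Gamma$ is a $K(\Gamma,1)$ of dimension at most $n-2$. The solid-torus example above shows this topological input is indispensable, so any repair of your argument must bring in $H_{n-1}(\Gamma)\cong\ZZ$ (or an equivalent maximality statement) rather than relying on proper discontinuity alone.
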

\begin{proof} Write $V={\mathbb R}^{n+1}$ so $\Gamma\subset\PGL(V)$. 
By (\ref{holonomylifts}) we may lift to get 
$\Gamma\subset\GL(V)$. By  (\ref{VnilVFG})(1) we can conjugate so that  $H=\core(\Gamma,n+1)$
is contained in the upper-triangular subgroup in $\GL(n+1,{\mathbb C})$. We replace $\Gamma$  by $H$.
  Then $V=A\oplus B$ where $A$ is the sum of the generalized weight spaces
for real weights and $B=\oplus B_i$ is the sum of the remaining conjugate generalized weights spaces. 
It suffices to show $B=0$, since then by (\ref{VnilVFG})(2) $\Gamma$ is conjugate into $\UT(n+1)$.

 Each vector $x\in V$  is uniquely expressed as a
 linear combination $a+b_1+\cdots + b_k$ with $a\in A$
and $b_i\in B_i$. Define $n(x)$ to be the  number of distinct $i$ with $b_i\ne 0$.
Choose $x\ne 0$ with $[x]\in\Omega$ so that $n(x)$ is minimal. 
{\bf Claim}  $n(x)=0$. 

\begin{proof}[Proof of the claim] 
If $n(x)\neq 0,$ then some $b_j\ne 0$. There is $\gamma\in\Gamma$ which has  
eigenvalues $\lambda_j(\gamma),\overline\lambda_j(\gamma)$ that are not real. 
Let $\langle\gamma\rangle$ be the cyclic group generated by $\gamma$.
Let $C\subset B_j$ be the  
convex hull of the orbit $\langle\gamma\rangle\cdot b_j$.

Suppose $0\notin C$. 
Then $K=\cl({\mathbb P}_+(C))$ is a closed
convex cell in ${\mathbb P}_+(B_j)$
that is preserved by $\gamma$. By the Brouwer fixed point 
theorem, $\gamma$ fixes a point  $[v]\in K$, so $v\in B_j$ 
is an eigenvector of $\gamma|B_j$ with a positive eigenvalue. 
However every eigenvector for $\gamma$ in $B_j$ has eigenvalue $\lambda_j(\gamma)$ or 
$\overline\lambda_j(\gamma)$ which are both not real. This contradiction shows that $0\in C.$

The convex cone ${\Cone}\Omega\subset V$  is preserved by $\Gamma$.
Since $0\in C$ there is a finite 
convex combination $\sum t_i\gamma^i b_j=0$ with  $t_i\ge 0$ and $\sum t_i=1$.  
Since $x\in{\Cone}\Omega$ and this cone is 
 $\Gamma$-invariant
 it follows that  $\gamma^i x\in{\Cone}\Omega$. Since $\Cone\Omega$ is convex{\edit,} the convex combination
 $x'=\sum t_i\gamma^i x\in{\Cone}\Omega$. In particular $x'\ne 0$ and  $[x']\in\Omega$. The component of $x'$ in $B_j$ is $\sum t_i\gamma^i b_j=0$.
Since the conjugate weights spaces
are $\Gamma$ invariant, the property that a point has a  zero component in some $B_i$ is preserved by $\Gamma$, so $n(x')<n(x)$ contradicting minimality.
Hence no such $b_j$ exists{\edit, and} this proves the claim.
\end{proof}
 
 Since $x\ne0$ it follows that $A\ne 0$ and $[x]\in W:=\Omega\cap{\mathbb P}(A)$  is a nonempty properly convex set that is
preserved by $\Gamma$.
The submanifold $M=W/\Gamma$ of $P$ is  convex and $\pi_1M\rightarrow\pi_1P$ is an isomorphism
so $\dim(M)\ge n-1$ by (\ref{Kpione}).
Now $B_i$ has real dimension at least $2$, so
$\dim A\le\dim V-\dim B_i\le n-1$. {\edit But $\dim M=\dim\PP(A)=\dim A-1\le n-2$,} which  is a contradiction.
\end{proof}

{\edit Suppose $H$ is a Lie group.}
A {\em virtual syndetic hull} of a discrete subgroup $\Gamma< H$
 is a connected Lie subgroup $G< H$ such that
$|\Gamma:G\cap\Gamma|<\infty$ and $(G\cap\Gamma)\backslash G$ is compact.  In other
words $\Gamma$ is virtually a (cocompact) lattice in $H$.}
When  syndetic hulls exist they are not always unique because the exponential map on $\mathfrak{gl}(n)$ is not injective for $n\ge 2$. 
It is useful to have a unique version of a syndetic hull.  For more about syndetic hulls see 
Witte \cite{Witte}.  Some of the arguments that follow are inspired by 
section 9 of \cite{CLT1} which derives
the classification of cusps in {\em strictly} convex projective manifolds. In particular  this applies
to the role of the syndetic hull.

Let $\mathfrak r\subset\mathfrak{gl}$ be the subset of all matrices $M$ such that all the
eigenvalues of $M$ are real. The set  $R=\exp(\mathfrak r)$
 consists of all matrices $A$ such that every eigenvalue of $A$ is positive.  Then $\exp:\mathfrak r\longrightarrow R$
is a diffeomorphism with inverse $\log$. 
An element of $R$ is called an {\em e-matrix}{\edit,} and a group $G\subset R$ is called
an {\em e-group}. For example $\UT(n)$ is an e-group. 
 The property of being an e-group is preserved by conjugation. If $G$ is a connected e-group,
then $\exp:\mathfrak g\longrightarrow G$ is a diffeomorphism. 
If $S\subset R$ define $\langle\log S\rangle$ to be the vector
subspace of $\mathfrak{gl}$ spanned by $\log S$.

\begin{definition}\label{egrpdef} Given a discrete subgroup
$\Gamma\subset\GL(n,\RR)$ a {\em virtual e-hull} for $\Gamma$ is
 a connected Lie group $G$ that is an e-group and $|\Gamma:G\cap\Gamma|<\infty$ and 
 $(G\cap\Gamma)\backslash G$ is compact. There might not be such {\edit $G$}.
 \end{definition}

\begin{proposition}[(9.3) of \cite{CLT1}]\label{9.3}
\label{maxcuspislattice} 
Suppose that $\Gamma$ is a finitely-generated,  discrete nilpotent subgroup of $\GL(n,{\mathbb R})$. 
Then $\Gamma$ contains a subgroup of finite index $\Gamma_0$, which has a syndetic hull $G \leq  \GL(n,{\mathbb R})$ 
that is nilpotent, simply-connected{\edit,} and a subgroup of the Zariski closure of $\Gamma_0.$
\end{proposition}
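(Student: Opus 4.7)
The plan is to apply Malcev's theory of lattices in simply connected nilpotent Lie groups. First I would invoke Selberg's lemma (applicable since $\Gamma$ is a finitely generated linear group) to replace $\Gamma$ with a torsion-free finite-index subgroup $\Gamma_0$. Let $H$ denote the Zariski closure of $\Gamma_0$ in $\GL(n,\R)$; this is a nilpotent real algebraic group. Replacing $\Gamma_0$ by the further finite-index subgroup $\Gamma_0\cap H^0$ allows me to assume $\Gamma_0\subset H^0$, the (Zariski) identity component.

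Next I would form the Malcev completion $\widetilde{G}$ of $\Gamma_0$: because $\Gamma_0$ is torsion-free, finitely generated, and nilpotent, there is a unique simply connected nilpotent Lie group $\widetilde{G}$ in which $\Gamma_0$ sits as a cocompact lattice. By Malcev's extension theorem (using that the Zariski closure $H$ of the image is nilpotent), the inclusion $\Gamma_0\hookrightarrow\GL(n,\R)$ extends to a Lie group homomorphism $\phi:\widetilde{G}\to\GL(n,\R)$. Define $G:=\phi(\widetilde{G})$.

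It then suffices to verify three properties of $G$. \emph{Injectivity:} $\ker\phi$ is a closed normal subgroup of $\widetilde{G}$ whose intersection with $\Gamma_0$ is trivial; a standard property of lattices in simply connected nilpotent Lie groups is that any nontrivial closed normal subgroup intersects the lattice nontrivially, so $\ker\phi=\{1\}$ and $G\cong\widetilde{G}$ is simply connected nilpotent. \emph{Closedness and cocompactness:} picking a compact fundamental domain $K\subset\widetilde{G}$ for $\Gamma_0$ yields $G=\Gamma_0\cdot\phi(K)$, which is closed in $\GL(n,\R)$ because $\Gamma_0$ is discrete and $\phi(K)$ is compact, and $\Gamma_0$ is cocompact in $G$ by construction. \emph{Containment $G\subseteq H$:} every one-parameter subgroup of $G$ passing through $\gamma\in\Gamma_0$ lies in the Zariski closure of $\langle\gamma\rangle$, hence in $H$; since $\widetilde G$ is topologically generated by such one-parameter subgroups, $G\subseteq H$.

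The main obstacle is constructing the extension $\phi$. Producing a Lie group homomorphism $\widetilde{G}\to\GL(n,\R)$ that restricts to the given inclusion on $\Gamma_0$ requires consistent choices of logarithms of elements of $\Gamma_0$ in $\mathfrak{gl}(n,\R)$, which are only defined modulo $2\pi i\mathbb{Z}$ on each nonreal complex eigenvalue. Discreteness of $\Gamma_0$ together with torsion-freeness rules out purely rotational elements, and this forces the compact-torus ambiguity in $H^0$ to be resolved coherently by a $\mathbb{Q}$-linear lift of $\log\Gamma_0$ into the nilpotent Lie algebra $\mathfrak{h}$; the $\R$-span of this lift is automatically a Lie subalgebra of $\mathfrak{h}$, whose exponential is the desired $G$.
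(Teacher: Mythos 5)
The paper does not prove this proposition: it is imported verbatim from \cite{CLT1} (their (9.3)) and used as a black box, so there is no in-paper argument to compare yours against. Judged on its own, your skeleton is the standard one --- pass to a torsion-free finite-index subgroup, form the Malcev completion $\widetilde G$, extend the inclusion to $\phi:\widetilde G\to\GL(n,\R)$, and check injectivity, closedness, cocompactness and containment in the Zariski closure. The verification steps at the end are essentially fine (the kernel argument, and $\Gamma_0\cdot\phi(K)$ being closed because a discrete subgroup is closed and $\phi(K)$ is compact).

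The genuine gap is the step you yourself flag as ``the main obstacle'': constructing $\phi$. Malcev's extension theorem applies when the target is a simply connected nilpotent Lie group; $\GL(n,\R)$ is not, and the identity component $H^0$ of the Zariski closure, while nilpotent, can contain a nontrivial compact central torus coming from the elliptic parts of elements of $\Gamma_0$. Your final paragraph asserts that the resulting $2\pi$-ambiguities in $\log$ ``must be resolved coherently by a $\mathbb{Q}$-linear lift of $\log\Gamma_0$ into $\mathfrak{h}$'' and that ``the $\R$-span of this lift is automatically a Lie subalgebra'' --- but neither claim is justified, and together they are essentially the whole content of the theorem: one must show a compatible system of logarithms exists (e.g.\ by splitting the central extension $1\to T_{\mathrm{cpt}}\to H^0\to H^0/T_{\mathrm{cpt}}\to 1$ over the syndetic hull of the image of $\Gamma_0$ in the simply connected quotient $H^0/T_{\mathrm{cpt}}$, compatibly with the given section over $\Gamma_0$), and that the span closes under bracket. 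Relatedly, the assertion used for the containment $G\subseteq H$ --- that every one-parameter subgroup through $\gamma$ lies in the Zariski closure of $\langle\gamma\rangle$ --- is false as stated: for $\gamma=2I\in\GL(2,\R)$ the one-parameter group $t\mapsto 2^{t}R_{2\pi t}$ passes through $\gamma$ but does not lie in $\{sI\}$. That containment is only automatic once $\phi$ has been built at the Lie algebra level inside $\mathfrak h$, which again returns to the unproved step.
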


\begin{lemma}\label{ehullexists} If $\Gamma\subset \UT(n)$  is a finitely-generated discrete nilpotent subgroup, 
 then it has an e-hull  $G\subset \UT(n)$. \end{lemma}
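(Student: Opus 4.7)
The plan is to invoke Proposition~\ref{9.3} applied to $\Gamma$. This yields a finite index subgroup $\Gamma_0 \le \Gamma$ together with a syndetic hull $G \le \GL(n,\RR)$ which is nilpotent, simply-connected, and contained in the Zariski closure of $\Gamma_0$. After that, the only real work is to show $G \subset \UT(n)$; the remaining conditions in Definition~\ref{egrpdef} will follow formally.

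First I would argue $G \subset \UT(n)$. Since $\Gamma_0 \subset \Gamma \subset \UT(n)$, its Zariski closure lies inside the Zariski-closed subgroup $B \le \GL(n,\RR)$ of all invertible upper-triangular matrices (defined by the polynomial conditions $a_{ij}=0$ for $i>j$). The positive-diagonal condition is not algebraic, so $\UT(n)$ is strictly smaller than $B$; however the diagonal characters $a_{ii}$ are continuous and nonvanishing on $B$, so in the analytic (Lie group) topology $B$ splits into $2^n$ components indexed by the signs of the diagonal entries, and the identity component is exactly $\UT(n)$. The connected Lie group $G$ must therefore lie in this identity component, so $G \subset \UT(n)$.

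Next, every element of $\UT(n)$ has positive real eigenvalues, namely its diagonal entries, so $\UT(n) \subset R = \exp(\mathfrak{r})$ and $G$ is an e-group. It remains to verify the virtual conditions of Definition~\ref{egrpdef}: since $\Gamma_0 \subset G \cap \Gamma$, we have $|\Gamma : G \cap \Gamma| \le |\Gamma : \Gamma_0| < \infty$, and the compactness of $\Gamma_0\backslash G$ (provided by Proposition~\ref{9.3}) passes to the further quotient $(G\cap\Gamma)\backslash G$. Thus $G$ is an e-hull for $\Gamma$ contained in $\UT(n)$.

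The main obstacle is the identification of the analytic identity component of the Zariski closure with $\UT(n)$; once this is in hand the lemma is essentially a bookkeeping exercise wrapped around Proposition~\ref{9.3}. No delicate point about uniqueness of syndetic hulls enters here, since uniqueness is not asserted by this lemma and is instead handled later via Theorem~\ref{transgrp}.
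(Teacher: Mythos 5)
Your reduction to Proposition~\ref{9.3} and your argument that $G\subset\UT(n)$ (Zariski closure of $\Gamma_0$ lands in the full upper-triangular group, and connectedness of $G$ forces it into the identity component $\UT(n)$) match the paper's proof; your treatment of that step is if anything more careful than the paper's. But there is a genuine gap at the end: the lemma asserts an \emph{e-hull}, not merely a \emph{virtual} e-hull. By analogy with ``syndetic hull'' versus ``virtual syndetic hull'' --- and as the paper's own proof makes explicit by concluding that ``$G$ is a syndetic hull of $\Gamma$'' --- the required conclusion is $\Gamma\subset G$ with $\Gamma\backslash G$ compact, not just $\Gamma_0\subset G$ for some finite-index $\Gamma_0$. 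You explicitly settle for the virtual conditions of Definition~\ref{egrpdef}. The missing step is the upgrade: on e-matrices $\log(\gamma^k)=k\log\gamma$, so $\langle\log\Gamma\rangle=\langle\log\Gamma_0\rangle\subset\mathfrak{g}$; and since $G$ is a connected e-group, $\exp:\mathfrak{g}\to G$ is a bijection, whence every $\gamma=\exp(\log\gamma)$ with $\gamma\in\Gamma$ lies in $G$, and $\Gamma\backslash G$ is compact as a quotient of $\Gamma_0\backslash G$. This stronger conclusion is what Theorem~\ref{transgrp} needs in order to identify the e-hull with $T(\Gamma)=\exp\langle\log(\core(\Gamma,n))\rangle$.

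A second, smaller gap: Proposition~\ref{9.3} assumes $\Gamma$ is finitely generated and discrete, which the lemma does not. The paper first passes to a finitely generated subgroup $\Gamma'\subset\Gamma$ with $\langle\log\Gamma'\rangle=\langle\log\Gamma\rangle$ (possible since $\mathfrak{gl}(n)$ is finite-dimensional), applies (\ref{9.3}) to a finite-index $\Gamma_0\subset\Gamma'$, and then recovers all of $\Gamma$ by the same log-span argument. In the paper's applications $\Gamma$ is always finitely generated and discrete, so this is harmless there, but as a proof of the lemma as stated you need this reduction before invoking (\ref{9.3}).
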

 \begin{proof}  
    By (\ref{9.3})  there is a finite index subgroup 
  $\Gamma_0\subset\Gamma$ which has a syndetic hull $G$. The Zariski closure of $UT(n)$ is
  the Borel subgroup $B$ of all
  upper triangular matrices
  in $\GL(n,\RR)$. It follows that the Zariski closure of $\Gamma$ is in $B$,
  so $G\subset B$. Moreover $G$ is connected so $G\subset UT(n)$. Since $\left|\Gamma:\Gamma_0\right| <\infty$ and
  $\Gamma_0\subset G$ it follows that
   $\langle\log\Gamma\rangle=\langle\log\Gamma_0\rangle\subset\mathfrak g$  thus
    $\Gamma\subset G$. Since $G\subset\UT(n)$ it is an e-group. 
    Moreover $\Gamma\backslash G$ is a quotient of $\Gamma_0\backslash G$ and
    so is compact.
    Hence $G$ is a syndetic hull of $\Gamma$.\end{proof}

\begin{lemma}\label{ehullunique} If $G_0$ and $G_1$ are virtual e-hulls of  a discrete subgroup
$\Gamma\subset\GL(n,\RR)$ then $G_0=G_1$.
\end{lemma}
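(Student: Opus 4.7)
The plan is to show $\mathfrak{g}_0 = \mathfrak{g}_1$, from which $G_0 = \exp(\mathfrak{g}_0) = \exp(\mathfrak{g}_1) = G_1$ follows at once since $\exp\colon \mathfrak{r}\to R$ is a diffeomorphism. Let $\Lambda = \Gamma \cap G_0 \cap G_1$. By definition of a virtual e-hull, each $\Gamma \cap G_i$ has finite index in $\Gamma$, so $\Lambda$ has finite index in each $\Gamma \cap G_i$; since $\Gamma \cap G_i$ is cocompact in $G_i$, so is $\Lambda$. The strategy is to identify both $\mathfrak{g}_0$ and $\mathfrak{g}_1$ with the $\R$-linear span $\langle \log \Lambda \rangle$.

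The inclusion $\langle \log \Lambda \rangle \subset \mathfrak{g}_i$ is formal: $\Lambda \subset G_i$ and $\log$ is the inverse of the diffeomorphism $\exp\colon \mathfrak{g}_i \to G_i$, so $\log \Lambda \subset \mathfrak{g}_i$. For the reverse inclusion I would invoke Mal'cev's theorem for the connected nilpotent Lie group $G_i$ with cocompact lattice $\Lambda$: there is a Mal'cev basis $X_1, \ldots, X_k$ of $\mathfrak{g}_i$ such that some finite-index sublattice of $\Lambda$ equals $\{\exp(n_1 X_1)\cdots\exp(n_k X_k) : n_j \in \ZZ\}$. Taking one $n_j = 1$ and the rest zero places $\exp(X_j)$ in $\Lambda$, so $X_j = \log\exp(X_j) \in \log \Lambda$ for each $j$. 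Hence $\mathfrak{g}_i = \langle X_1,\ldots,X_k\rangle \subset \langle \log \Lambda\rangle$, giving $\mathfrak{g}_0 = \langle\log\Lambda\rangle = \mathfrak{g}_1$ and therefore $G_0 = G_1$.

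The main obstacle is verifying that each $G_i$ is actually nilpotent, so that Mal'cev's theorem is available. In the intended application (e.g.\ to quasi-cusp groups) $\Gamma$ is virtually nilpotent by (\ref{gencuspisVFG}), whence $\Lambda$ is finitely generated, discrete and nilpotent. Applying Proposition~\ref{9.3} to a finite-index subgroup of $\Lambda$ produces a nilpotent, simply-connected syndetic hull sitting inside the Zariski closure of $\Lambda$; cocompactness of $\Lambda$ in the connected e-group $G_i$ should then force $G_i$ to coincide with this hull, pinning down its nilpotency. Nailing down this forcing step carefully, and deciding whether to drop the virtually-nilpotent hypothesis on $\Gamma$ altogether (in which case one would instead need a direct solvable analogue via the derived series of $\mathfrak{g}_i$), is the delicate point of the proof.
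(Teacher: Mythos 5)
Your proposal takes a genuinely different route from the paper, and as you yourself flag, it has a real gap: Mal'cev's theorem requires each $G_i$ to be nilpotent (and simply connected), and nothing in the definition of a virtual e-hull gives you that. The ``forcing step'' you sketch --- that cocompactness of $\Lambda$ in $G_i$ should make $G_i$ coincide with the nilpotent syndetic hull produced by (\ref{9.3}) --- is essentially the uniqueness assertion you are trying to prove, so leaving it unexamined is circular. Note also that the lemma is stated for an arbitrary discrete $\Gamma$, with no nilpotency hypothesis; even in the applications, where $\Gamma$ is virtually nilpotent, one would still have to show that the ambient connected e-group $G_i$ is nilpotent, which is not automatic.

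The paper's argument sidesteps all of this by exploiting the e-group structure directly. The key observation is a divisibility trick: if $\gamma\in\Gamma\cap R$, then $\gamma^n\in G_i$ for some $n>0$ (finite index), and since $\log(\gamma^n)=n\log\gamma$ and $\exp$ is injective on $\mathfrak r$, one gets $\log\gamma\in\mathfrak g_i$, hence $\gamma\in G_i$. Thus $\Gamma\cap G_0=\Gamma\cap R=\Gamma\cap G_1$, so the \emph{same} cocompact lattice sits inside $H=G_0\cap G_1$. One then checks $H$ is connected (each $h\in H$ lies on the one-parameter subgroup $\exp\langle\log h\rangle\subset G_0\cap G_1$) and closed, so $(\Gamma\cap G_i)\backslash H$ is compact inside $(\Gamma\cap G_i)\backslash G_i$, which forces $H=G_i$ for both $i$. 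If you want to salvage your approach, the first paragraph of the paper's proof is exactly the ingredient you are missing: it replaces Mal'cev's theorem by the elementary fact that in an e-group, containment of a power of $\gamma$ already forces containment of $\gamma$ and of the whole line $\RR\log\gamma$.
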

\begin{proof} The group $H=G_0\cap G_1$ is connected because if $h\in H,$ then the one parameter group
$\exp\langle\log h\rangle$ is contained in both $G_0$ and $G_1$.   With $R$ defined above set
$\Gamma'=\Gamma\cap R$. If $\gamma\in\Gamma'$
 then $\gamma^{\edit m}\in G_i$ for some ${\edit m}>0$. Thus $\log \gamma^{\edit m}={\edit m}\log\gamma \in \log G_i$
so $\gamma\in G_i$. Thus  $\Gamma'=\Gamma\cap G_i\subset G_i$ so $\Gamma'\subset H$.
 Since $\Gamma'$ is  a lattice in $G_i$, and $H$ is a closed subgroup of $G_i$, it follows
that $\Gamma'$ is also a lattice in $H$. 
Since $H$ and $G_i$ are diffeomorphic to their Lie algebras, if $H\ne G_i$ then $\dim H<\dim G_i$
which contradicts that $\Gamma'$ is a lattice in $G_i$.\end{proof}

\begin{definition}\label{Thulldef} If $\Gamma\subset GL(n,\RR)$  is finitely-generated and $\Gamma\in VFG,$ then
the {\em translation group} of  $\Gamma$
 is $T(\Gamma)=\exp\langle\log (\core(\Gamma,n))\rangle$.
\end{definition}

\begin{theorem}\label{transgrp}  If $\Omega/\Gamma$ is a quasi-cusp 
then  $T(\Gamma)$ is the unique virtual e-hull of $\Gamma$.
\end{theorem}
\begin{proof} {\edit Set $n=1+\dim\Omega$.} Recall that the definition of quasi-cusp implies $\Gamma$ is virtually nilpotent.
By (\ref{gencuspisVFG})  $\core(\Gamma,n)$
 is conjugate into $\UT(n)$ and is therefore 
an e-group. 
By (\ref{ehullexists}) $\core(\Gamma,n)$ has an e-hull, $T$, that is conjugate into $\UT(n)$. Thus $T$ is
a virtual e-hull of $\Gamma$. Uniqueness of $T$
follows from (\ref{ehullunique}). It is  now clear that $T=T(\Gamma)$.
\end{proof}

The next thing to do is show that{\edit, if $\Omega/\Gamma$ is a generalized cusp of dimension $n$,} the orbit under $T(\Gamma)$ of a point $x\in\partial\Omega$ is a strictly
convex hypersurface. {\edit The key to doing this is to show that, if the cusp is minimal, then $\Omega$
is a closed convex subset of $\RR^n$ bounded by $\partial\Omega$, see (\ref{RFendlemma}).}

A {\em projective flow} $\flow$ on $\RPn$ is a continuous monomorphism 
$\flow:{\mathbb R}\longrightarrow \PGL(n+1,{\mathbb R})$. 
There is an infinitesimal generator $A\in\mathfrak{gl}_{n+1}$
with $\Phi_t:=\Phi(t)=\exp(tA)$.
If  $p\in\RPn$ and  $\flow_t(p)=p$ for all $t,$ then $p$ is a {\em stationary point} of $\Phi$. 
 A  {\em radial flow} is a projective flow that is stationary on a hyperplane $H\cong {\mathbb R}P^{n-1}${\edit,}
and that is parameterized so that $\Phi_t(p)\to  r\in H$ as $t\to-\infty$ whenever $p$ is not stationary.
It follows that  $\flow_t=\exp(t A),$
where $A\in \mathfrak{gl}_{n+1}$
is a rank one matrix{\edit,} and $H$ is the projectivization
of  $\ker A$. The projectivization of the image of $A$ is a point $p\in{\mathbb R}P^n$, called
the {\em center of the flow},
 that is also fixed by $\flow$. Every orbit is contained in a line containing the center. This property  characterizes radial flows.

A radial flow is  {\em parabolic} if $p\in H$ and {\em hyperbolic} otherwise. 
Every radial flow is conjugate to one generated by an elementary matrix $E_{i,j}$.
A parabolic flow is conjugate to
$(I + t\cdot E_{1,n+1})$ and a hyperbolic flow is conjugate to
the diagonal group
$(\exp(t),1,\cdots,1)$.  
The {\em backward orbit} of $X\subset\RPn$ is
$\flow_{(-\infty,0]}(X)$. A set $X\subset\RPn$ is {\em backward invariant} if $X$ contains its backward orbit,
and it is {\em  backward vanishing} if  $\cap_{t<0} \Phi_t(X)=\emptyset$.

A {\em displacing hyperplane} for a radial flow $\Phi$ is a hyperplane $P$ such that 
$P$ and $\Phi_t(P)$ are disjoint in $\RPn\setminus H$ for  all $t\ne 0$. A hyperplane $P$
is displacing if and only if $P\ne H$ and
$P$ does not contain the center of $\Phi$. 

 \begin{proposition}\label{gencuspRF} Suppose $\Omega/\Gamma$ is a quasi-cusp and 
$\Gamma\subset\UT(n+1)$. {\edit If} $\lambda:\Gamma\to\RR^*$ {\edit is a weight}
with generalized weight space $V=V(\lambda)$ {\edit then}
 there is a radial flow $\Phi=\Phi^{\lambda}$ that is centralized
by $\Gamma$, and $\Phi$ acts trivially on each generalized weight space other than $V$. 

If $\dim V\ge2$, then $\Phi$ is parabolic, and if $\dim V=1,$ then $\Phi$ is hyperbolic.
The center of $\Phi$ is {\edit contained in} $\PP(E(\lambda))$.
 The group $G(\Gamma):=  T(\Gamma)\times\Phi(\RR)$ 
generated by $T(\Gamma)$ and $\Phi(\RR)$ is  their internal direct product.
If the orbit of $x\in\RPn$ under $T(\Gamma)$ is a strictly{\edit-}convex hypersurface then 
 $G(\Gamma)\cdot x\subset\RPn$ is open.
\end{proposition}
\begin{proof} We may assume $\Gamma$ {\edit is} upper-triangular and block diagonal{\edit,} with one  block
for each generalized weight space. 
We may assume $V$ is the first block and set $m=\dim V$. As above, let $E_{i,j}\in\mathfrak{gl}(n{\edit +1})$
be the elementary matrix with $1$ in row $i$ and column $j$. Define $\Phi(t)=\exp(t E_{1,m})$.
Then $\R^{n\edit+1}=V\oplus W$ where $W$ is the sum of the other generalized weight spaces
and the action of $\Phi$ on $W$ is trivial. 
If $m=1$ then $\Phi(t)=\diag(\exp(t),1,\cdots,1)$ is a hyperbolic flow. 
If $m\ge 2$ then $\Phi(t)$ is a parabolic flow
given by the
unipotent subgroup with $t$ in the top right corner of the block for $V$. 
The center is $\PP(e_1)$ and the stationary hyperplane is
$H=\PP(\langle e_1,\cdots,e_{m-1}\rangle\oplus W)$.
It is easy to check that $\Gamma$ centralizes $\Phi$.

Since $T(\Gamma)=\exp(\mathfrak t)$ and $\Phi(\RR)=\exp(\mathfrak f)$ are $e$-groups, 
 if they have a nontrivial intersection, then $\Phi(\RR)\subset T(\Gamma)$.
The orbits of $\Phi$ are lines. 
If $S=T(\Gamma)\cdot x$ is a strictly{\edit-}convex hypersurface,
then it does not contain a line so
 $\Phi(\RR)\cap\Gamma$ is trivial{\edit, and $\Phi(\RR)\cdot S\subset\RPn$ is open.} 
Since $\Phi(\RR)$ and $T(\Gamma)$
commute they generate $G= T(\Gamma)\times \Phi(\RR)$.
 \end{proof}

\begin{definition}\label{RFcuspdef} A radial flow $\flow_t$ is {\em compatible} with a properly
convex manifold $M=\Omega/\Gamma$ if $\flow(\RR)$ commutes with $\Gamma${\edit,}
and $\Omega$ is disjoint from the stationary hyperplane of $\Phi${\edit,} and $\Omega$
is backward invariant and backward vanishing.

A {\em radial flow end} is a properly convex manifold $M=\Omega/\Gamma$ with compact, strictly
convex boundary{\edit,} and for which there is a compatible radial flow. A {\em radial flow cusp} is a radial flow end that is also a generalized cusp.
 \end{definition}

The hypersurfaces $\widetilde{S}_t:=\Phi_{-t}(\partial\Omega)$ are strictly{\edit-}convex and $\Gamma$-invariant. {\edit Those with $t\ge 0$  
  foliate $\Omega$.} They are all disjoint from $H$.  
 Their images under the projection $\pi:\Omega\to M$ give a product 
 foliation of $M$ by compact{\edit,} strictly{\edit-}convex{\edit,} hypersurfaces $S_t=\pi(\widetilde{S}_t)$. 
 There is a transverse foliation of $\Omega$ by flow-lines that limit on the center of $\Phi$.
 These project to a transverse foliation of $M$ by rays.

The {\em flow time function} is  $\widetilde{T}:\Omega\to[0,\infty)$ defined by
 $\widetilde{T}(x)=t$ if $\Phi_t(x)\in\partial\Omega$. Thus
 $\widetilde{T}(x)$ is the amount of time for $x$ to flow into $\partial\Omega$
 and $\widetilde{T}(\widetilde{S}_t)=t$.
Let $\pi:\Omega\to M$ be the projection. Then 
there is a map $T:M\to[0,\infty)$ defined by $T(\pi x)=\widetilde{T}(x)$.
 The level sets of $T$ are the hypersurfaces $S_t$.

\begin{lemma}\label{RFcenter} Suppose $\Phi$ is a radial flow with center $p$ and stationary hyperplane $H$.
Suppose $\Omega\subset\RPn\setminus H$ {\edit is properly convex.} If $\Phi$ is hyperbolic and $p\notin\cl(\Omega),$
then $\Omega$ is backward vanishing.
If $\Phi$ is parabolic, then $\Omega$ is backward vanishing for either $\Phi(t)$ or $\Phi'(t):=\Phi(-t)$.
\end{lemma}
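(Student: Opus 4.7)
The plan is to normalize the flow by a projective conjugation so that $H$ becomes the hyperplane at infinity of the affine chart $\R^n=\RPn\setminus H$ and $\Phi$ takes a standard form, then argue directly in $\R^n$. Since $\Phi_t$ fixes $H$ pointwise and $\Omega\subset\RPn\setminus H$, the set $\Phi_t(\Omega)$ lies in $\R^n$ for every $t$, so it suffices to show $\bigcap_{t<0}\Phi_t(\Omega)=\emptyset$ inside $\R^n$. In the hyperbolic case I will arrange that $p$ is the origin and $\Phi_t(y)=e^{-t}y$; in the parabolic case, that $\Phi_t$ is the translation $y\mapsto y+te_1$, whose direction $e_1$ points toward the center $p$ at infinity.

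For the hyperbolic case, substituting $s=-t$ gives $\bigcap_{t<0}\Phi_t(\Omega)=\bigcap_{s>0}e^{s}\Omega$. The hypothesis $p\notin\cl(\Omega)$ translates into $0\notin\cl_{\R^n}(\Omega)$, and since the latter is a closed convex subset of $\R^n$, the Hahn--Banach separation theorem produces a linear functional $v$ and $\delta>0$ with $v\cdot x\ge\delta$ on $\cl_{\R^n}(\Omega)$. Then $e^{s}\Omega\subset\{x\in\R^n:v\cdot x\ge e^{s}\delta\}$, and as $e^{s}\delta\to\infty$ no point of $\R^n$ can lie in all of these half-spaces, so the intersection is empty.

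For the parabolic case, $\Phi_t(\Omega)=\Omega+te_1$, so a change of variable gives
\[
\bigcap_{t<0}\Phi_t(\Omega)=\{\,q\in\R^n:q+se_1\in\Omega\ \text{for all}\ s>0\,\},
\]
which is nonempty if and only if $e_1$ is a recession direction of $\cl_{\R^n}(\Omega)$; similarly the intersection for $\Phi'$ is nonempty if and only if $-e_1$ is a recession direction. If both $\pm e_1$ were recession directions then, for any $q\in\Omega$, the entire affine line $q+\R e_1$ would lie in $\cl_{\R^n}(\Omega)$. Its projective closure is a projective line, obtained by adjoining the ideal point $p\in H$, and would therefore lie in $\cl_{\RPn}(\Omega)$. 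But every hyperplane in $\RPn$ meets every projective line, so $\cl_{\RPn}(\Omega)$ would meet every hyperplane, contradicting the proper convexity of $\Omega$. Hence one of $\pm e_1$ fails to be a recession direction, and the corresponding one of $\Phi,\Phi'$ is backwards vanishing. The main obstacle is this last point---using proper convexity to rule out a full affine line in $\cl(\Omega)$ in the flow direction; once that is in hand the rest is a direct Euclidean computation in the standard models of the two flows.
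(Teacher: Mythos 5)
Your proof is correct. The hyperbolic case is essentially identical to the paper's: both strictly separate $p=0$ from $\cl(\Omega)$ by Hahn--Banach and observe that the resulting half-space (equivalently, the sublevel sets of your functional $v$) escapes to infinity under the backwards flow. In the parabolic case the paper argues directly: proper convexity supplies a hyperplane $P$ disjoint from $\Omega$ and not containing $p$, which is then a displacing hyperplane whose half-space $U\supset\Omega$ is backwards vanishing for one of the two orientations of the flow. You instead argue by contradiction via recession cones: if neither orientation vanished, both $\pm e_1$ would be recession directions of the closed convex set $\cl_{\R^n}(\Omega)$, forcing a full affine line and hence (after projective completion through $p$) a full projective line inside $\cl_{\RPn}(\Omega)$, which meets every hyperplane and so contradicts proper convexity. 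The two mechanisms are dual to one another --- a supporting hyperplane transverse to the flow direction versus the absence of a line in the flow direction --- and yours is a perfectly valid, perhaps slightly more self-contained, substitute since it does not rely on the paper's displacing-hyperplane machinery. One small remark: you state the recession-cone characterization as an ``if and only if,'' but you only need (and only verify) the forward implication, which is all the contradiction requires.
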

 \begin{proof} If $\Phi$ is hyperbolic and $p\notin \cl(\Omega),$ then
 by the Hahn-Banach separation theorem there is {\edit a hyperplane} $P$ that separates $\Omega$ from $p$.
If $\Phi$ is parabolic, then choose any hyperplane $P$ disjoint from $\Omega$ that does
not contain $ p$. In either case $P$ is a displacing hyperplane. After possibly reversing $\Phi$ in the parabolic case,
{\edit the component of $\RR^n\setminus P$ that contains $\Omega$
is  a half-space that} is backward vanishing, and hence so is $\Omega$.
 \end{proof}
{\edit The reason for introducing radial flow ends is:}
\begin{lemma}\label{RFendlemma} Suppose $\Omega\subset\RPn$ and 
$M=\Omega/\Gamma$ is a radial flow end with radial flow $\Phi.$
Let $H\subset \RPn$ be the stationary hyperplane for $\Phi${\edit, 
and $\overline{\Omega}=\cl(\Omega)\subset\RP^n$}. Then 
$\partial\overline\Omega=\partial\Omega\sqcup(H\cap\overline\Omega)$. 
{\edit In particular, $\Omega$ is a closed convex subset of $\RR^n=\RP^n\setminus H$
bounded by the properly embedded, strictly{\edit-}convex hypersurface $\partial\Omega$.}
\end{lemma}
\begin{proof}  Let $\RR^n=\RPn \setminus H,$ so that $\Omega\subset\RR^n$. 
 It suffices to show that $\partial\Omega$ is properly embedded in $\RR^n$ and therefore 
$\Omega$ is a  closed convex set in $\RR^n$ bounded by $\partial\Omega$.

   Let $p$ be the center of $\Phi$. 
 Choose  a displacing hyperplane $P\subset\RPn$ that is disjoint from $\Omega$
such that if $\Phi$ is hyperbolic then $P$ separates
$p$ from $\Omega$ in $\RR^n$.  

Let $U$ be the closure of the component of $\RR^n\setminus P$
that is the half-space containing $\Omega.$ Then
$U$ is backward invariant. Thus $U$ is the backward orbit of $P$.
 Define the function $\tau:U \to[0,\infty)$ by
 $\tau(x)=t$ if $\Phi_t(x)\in P$. This is the amount of time it takes $x$ to flow into $P$.
  Observe that if $x,y\in\Omega,$ then $\widetilde{T}(x)-\widetilde{T}(y)=\tau(x)-\tau(y)$.

  Because  {\edit $\widetilde{S}_t:=\Phi_{-t}(\partial\Omega)$} is strictly{\edit-}convex{\edit,} it follows that 
 the only critical points of the restriction of  $\widetilde{T}$ to a segment are maxima, and therefore there is
  at most one critical point on a segment. Thus $T:M\to[0,\infty)$ has the same critical point behavior along
  segments.
  
  Choose a metrically-complete Riemannian metric
 on $M$ and use the lifted metric  
 on $\Omega$.
Suppose $\partial\Omega$ is not properly embedded in $\RR^n$. Then there is a sequence 
 $\widetilde p_k\in\partial\Omega$
 which converges in $\RR^n$ to a point $\widetilde p_{\infty}\notin\partial\Omega$. 
   
 Let $\alpha_k$ be the length of $[\widetilde p_0,\widetilde p_k]\subset\Omega$. 
 Then $\alpha_k\to\infty$ because $\widetilde{p}_{\infty}\notin\Omega$ and the metric on $\Omega$ is complete. Let
 $\widetilde{\ell}_k:[0,1]\to[\widetilde p_0,\widetilde p_k]$ 
 be the unit segment.
  {\edit Then} $\widetilde{\ell_k}$ converges to $\widetilde{\ell}_{\infty}:[0,1]\to[\widetilde p_0,\widetilde p_{\infty}]$.
      The restriction of $\widetilde\ell_{\infty}$ to $[0,1)$ is a ray, $\widetilde\ell:[0,1)\to\Omega$, of infinite length in $\Omega$.
 Since $\widetilde{p}_k\to\widetilde{p}_{\infty}$ there is $\beta>0$ such that 
${\edit \widetilde T}\circ\widetilde\ell_k\le\beta$ for all $k\in[0,\infty]$. 

  The projection $\ell_k=\pi\circ\widetilde\ell_k:[0,1]\to M$ is an immersed affine segment 
   and $T\circ\ell_k\le \beta$. Thus $\ell_k$ is contained in the compact set $M_{\beta}:=\cup_{0\le t\le \beta}S_t$.
These segments converge to the ray $\ell=\pi\circ\widetilde{\ell}$ of infinite length that is also contained $M_{\beta}$.
 Now $T\circ\ell:[0,1)\to[0,\beta]$ is eventually monotonic. {\edit Thus there is a segment, $\ell^*:[0,1]\to M_{\beta}$, of length $1$ 
 that is a limit of subsegments of $\ell$ of length 1, and
 $T\circ \ell^*$ is some constant $\alpha$. Thus $\ell^*$}
 is contained in 
$S_{\alpha}$.
But this contradicts the fact that $S_{\alpha}$ is strictly{\edit-}convex.
It follows that $\partial\Omega$ is properly embedded in $\RR^n$.
Hence $\Omega$ is a  closed convex set in $\RR^n$.
 \end{proof}

{\edit To apply this we need}
\begin{proposition}\label{gencuspcontainsRFcusp} Every minimal generalized cusp $C=\Omega/\Gamma$ with 
$\Gamma\subset\UT(n+1)$
is a radial flow cusp.
\end{proposition}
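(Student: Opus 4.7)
The plan is to construct a compatible radial flow for $C$ in two cases (parabolic and hyperbolic), and then use minimality to conclude that $\Omega$ is automatically backwards-invariant under the flow.

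The first step is to show that $\Omega$ is disjoint from every $\Gamma$-invariant hyperplane $P\subset\RPn$. If $P\cap\Omega\neq\emptyset$, then by strict convexity of $\partial\Omega$ we would have $P\cap\partial\Omega\neq\emptyset$, so $R:=(P\cap\Omega)/\Gamma$ would be a codimension-one properly convex submanifold of $C$ with $\partial R$ a nonempty closed codimension-one submanifold of $\partial C$. Since $\partial C\hookrightarrow C$ is a homotopy equivalence, a standard homology argument gives a contradiction, so $\Omega\cap P=\emptyset$.

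Next I would produce the radial flow using (\ref{gencuspRF}), splitting into two cases. In the \emph{parabolic case}, some generalized weight space $V(\lambda)$ has dimension $\ge 2$; let $\Phi=\Phi^{\lambda}$ be the corresponding parabolic radial flow with stationary hyperplane $H$. By the first step $\Omega\cap H=\emptyset$, and (\ref{RFcenter}) shows that, after possibly replacing $\Phi_t$ by $\Phi_{-t}$, the domain $\Omega$ is backwards vanishing. In the \emph{hyperbolic case}, every generalized weight space is one-dimensional, so $\Gamma$ is diagonalizable. The weight lines give $n+1$ projective points in general position, and their $n+1$ spanning hyperplanes $P_0,\dots,P_n$ are all $\Gamma$-invariant, hence disjoint from $\Omega$; thus $\Omega$ is contained in a single open simplex $\Delta$ cut out by these hyperplanes. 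Because $\partial\Omega$ is strictly convex, not every vertex of $\Delta$ can lie in $\cl(\Omega)$, so some vertex $p\notin\cl(\Omega)$ exists; take $\Phi$ to be the hyperbolic flow with center $p$ and stationary hyperplane the opposite face (again disjoint from $\Omega$). Then (\ref{RFcenter}) again yields backwards vanishing.

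The main obstacle is backwards invariance, which I would handle by enlarging $\Omega$ and using minimality. Set $\Omega_1=\bigcup_{t\le 0}\Phi_t(\interior\Omega)$. Since $\Phi$ centralizes $\Gamma$, the set $\Omega_1$ is $\Gamma$-invariant and open. Convexity of $\Omega_1$ follows because any two points lie on $\Phi$-translates of segments in $\Omega$, and the union of $\Phi$-translates of any segment in $\Omega$ is planar and convex. Proper convexity of $\Omega_1$ is shown exactly by the cone-tangent argument of Figure~\ref{fig1} in the proof of (\ref{RFhessianisconvex}): take a supporting hyperplane $P$ to $\partial\Omega$ at a boundary point $q$, work in affine coordinates on $\RR^n=\RPn\setminus H$ placing $P$ as $\{x_1=1\}$, and observe that by strict convexity of $\partial\Omega$ some cone with apex on $P$ contains $\interior\Omega$; its backward $\Phi$-orbit is then a properly convex cone containing $\Omega_1$.

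Finally, let $\Omega_2$ be the flow-closure of $\Omega_1$, namely the set of $x$ with $\Phi_t(x)\in\Omega_1$ for all $t\le 0$. Then $\Omega_2$ is properly convex, $\Gamma$-invariant, backwards-invariant, and $F(x,t)=\Phi_{-t}(x)$ gives a diffeomorphism $\partial\Omega_2\times[0,\infty)\cong\Omega_2$. The group $\Gamma$ is torsion-free (it acts freely on $\Omega\subset\Omega_2$) and discrete, so it acts freely and properly discontinuously on $\Omega_2$, giving a radial flow cusp $C':=\Omega_2/\Gamma$. By construction $\interior C\subset C'$, and since $C\cong \partial C\times[0,\infty)$ is a maximal cusp with $\partial C\subset C'$ separating the boundary from the end, we get $C\subset C'$. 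Minimality of $C$ then forces $C=C'$, and the compatible flow on $C'$ makes $C$ itself a radial flow cusp.
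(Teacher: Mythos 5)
Your construction tracks the paper's almost step for step: the claim that $\Omega$ misses every $\Gamma$-invariant hyperplane, the parabolic/hyperbolic dichotomy via (\ref{gencuspRF}), the backward orbit $\Omega_1$, its flow closure $\Omega_2$, and the free, properly discontinuous action. The genuine gap is in the final identification of $C$ with $C'=\Omega_2/\Gamma$, which is exactly the step the paper spends the most effort on. First, the inclusion $\partial C\subset C'$ is asserted rather than proved: for $x\in\partial\Omega$ and $t<0$ one needs $\Phi_t(x)\in\Omega_1=\bigcup_{s\le0}\Phi_s(\interior\Omega)$, which amounts to the forward flowline through $x$ re-entering $\interior\Omega$ --- not obvious, since a flowline through a boundary point could a priori be a supporting line of $\Omega$. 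The paper establishes $\Omega\subset\Omega_2$ by a separate argument: it uses (\ref{smoothbdry}) to produce a collar $P$ of $\partial C$ in $C$ with strictly convex inner boundary $Q$, observes that $Q$ separates $\partial(\Omega_2/\Gamma)$ from the end, and concludes that the compact region between them contains $P\setminus\partial C$ and hence its closure, so $\partial C\subset \Omega_2/\Gamma$.

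Second, and more seriously, ``minimality of $C$ forces $C=C'$'' misapplies the definition. Minimality constrains sub-cusps of $C$ having boundary $\partial C$; here $C\subset C'$, so $C$ is the sub-cusp and minimality of $C$ says nothing about the larger $C'$. What is actually needed is the reverse inclusion $\Omega_2\subset\Omega$, which the paper obtains by proving that $\Omega$ itself is backward invariant: since $\partial C$ is compact and strictly convex, $\Phi_{-t}(\partial\Omega)\subset\CH(\partial\Omega)=\Omega$ for all sufficiently small $t>0$ (this is where minimality genuinely enters, via $\Omega=\CH(\partial\Omega)$ from (\ref{mincusplem})), and the set of such $t$ is closed under addition, hence is all of $(0,\infty)$. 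Without some such argument nothing rules out $\Omega_2\supsetneq\Omega$, i.e.\ the flow closure overshooting $\partial\Omega$ in the forward direction. A smaller point: proper discontinuity of $\Gamma$ on $\Omega_2$, which contains frontier points of the open set $\Omega_1$, deserves a word; the paper reduces to the open case by noting that $\Phi_{-1}$ embeds $\Omega_2$ equivariantly into $\Omega_1$.
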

\begin{proof}   It suffices to show there is a radial flow that is compatible with $C$.

{\bf  Claim 1.} $\Omega$ is disjoint from every $\Gamma$-invariant hyperplane $ H$.

\begin{proof}[Proof of claim 1]
If $ H\cap\Omega\ne\emptyset$, then  
$ H\cap\partial\Omega\ne\emptyset$ since $C$ is minimal. Observe that
 $ H\cap\Omega$ is properly convex and preserved by $\Gamma$. 
Thus $R=(H\cap\Omega)/\Gamma$ is a convex codimension-1 submanifold of $C$  with nonempty boundary{\edit,}
 which contradicts (\ref{Kpione}).\end{proof}

There are now two cases:

{\bf Parabolic case.} There is a generalized weight space $W$ for $\Gamma$ with $\dim W\ge2$.
 Let $\Phi$ be
the parabolic radial flow  that centralizes $\Gamma$ given by
(\ref{gencuspRF}). Let $H$ be the stationary hyperplane
and $p\in H$ the center of $\Phi$. 
Let $P$ be a displacing hyperplane that is tangent to $\Omega$ at $q\in\partial\Omega$.

{\bf Hyperbolic case.} Every generalized weight space has dimension $1$, so $\Gamma$ is diagonalizable.
The weight spaces projectivize to give points $p_0,\cdots ,p_n\in\RPn$ that are in general position.
The hyperplane $P_i$  contains all these points except $p_i$. 
These hyperplanes divide $\RPn$
into  $2^{n}$ open $n$-simplices. These hyperplanes are $\Gamma$-invariant so $\Omega$
is contained in  one of these simplices{\edit, say $\Delta$,}  by minimality of the cusp. There is a vertex $p$ of
$\Delta$ with  $p\notin \overline\Omega$ because $\partial\Omega$ is a strictly{\edit-}convex hypersurface in $\Delta$.
After relabelling $p=p_0$,
let $H=P_0$ and let $\Phi$ be the radial
flow with center $p$ and stationary hyperplane $H$. Then $\Phi$ centralizes $\Gamma$ and $p$ is disjoint
from $\cl(\Omega)$. By  the proof of (\ref{RFcenter})
 there is a displacing hyperplane $P$ that separates $p$ from $\Omega$.

In each case, $\Omega$ is disjoint from $H$ by claim 1.
Set $\RR^n=\RPn\setminus  H$ so $\Omega\subset\RR^n$.
Let $U$ be the closure of the component of $\RR^n\setminus P$ that contains $\Omega$.
Choose linear coordinates on $\RR^n$ such that $q=e_1=(1,0,\cdots,0),$ {\edit and
$U$ is the half-space} $x_1\ge 1,$ and, moreover, $p=0$ in the hyperbolic case and $p$ is the limit of the positive $x_1$-axis in the
parabolic case.
 Then $P=\partial U$ is the horizontal hyperplane $x_1=1$. 

We may assume $U$ is backward invariant after possibly reversing the flow in the parabolic case.
We reparameterize $\Phi$ so that
 in these coordinates{\edit,} in the parabolic case $\Phi_t(x) = x-t\cdot e_1${\edit,} and in the hyperbolic
 case $\Phi_t(x)=\exp(-t)\cdot x$.

Let $p_1:U\to P$ be the projection along flow-lines. In the parabolic case
$p_1(x_1,\cdots,x_n)=(1,x_2,\cdots,x_n)$ and in the hyperbolic case
$p_1(x_1,\cdots,x_n)=(1,x_2/x_1,\cdots,x_n/x_1)$.   Let $\Omega_1$ be the backward orbit of $\interior \Omega$.

{\bf Claim 2.}  {\edit $\Omega_1$ is open in $\RR^n$, properly convex, backward invariant and contains $\interior(\Omega)$}.
\begin{proof}[Proof of claim 2] {\edit Clearly $\Omega_1$ is open, backward invariant, and contains $\interior(\Omega)$.}
Suppose $a,b\in\Omega_1.$ Then $a=\Phi_{\alpha}(a')$
and $b=\Phi_{\beta}(b')$ for some $\alpha,\beta\le 0$ and $a',b'\in\interior \Omega$. Let $\ell=[a',b']$ be the line segment with endpoints $a',b'$.
Since $\Omega$ is convex $\ell\subset\Omega$. Then $\cup_{t\le0}\Phi_t(\ell)$ is a planar convex set in $\Omega_1$
that contains $a$ and $b$. Hence $\Omega_1$ is convex.

Let $C$ be the cone of $\Omega$ from $0$. Since $\Omega$ 
is properly convex{\edit,} and ${\edit 0}\notin\overline{\Omega}${\edit,} it follows that $C$ is properly convex. Moreover
$C$  contains $\Omega_1$, so $\Omega_1$ is properly
convex, proving  claim 2.
\end{proof}

We want to add a boundary to $\Omega_1$ and show this gives $\Omega$.
Define $\Omega_M$ to be the {\em flow closure} of $\Omega_1$, i.e.\thinspace  the set of all points $x$
such that $\Phi_{t}(x)\in\Omega_1$ for all $t<0$.  Clearly $\Omega_M\subset\cl(\Omega_1)\edit\subset\RR^n$. There is
a homeomorphism $\widetilde F:\partial\Omega_M\times[0,\infty)\to \Omega_M$
given by $F(x,t)=\Phi_{-t}(x)$. {\edit Since $\Omega_1$ is open, $\Omega_M$ is a manifold with boundary $\partial\Omega_M$.}  
It is clear that $\Omega_M$ is disjoint from $H$, backward invariant and backward vanishing.

{\bf Claim 3.} $\Gamma$ acts freely and properly discontinuously on $\Omega_M.$

\begin{proof}[Proof of claim 3]
Since $\Gamma$ commutes with $\Phi$ it follows that $\Omega_1$ is preserved by $\Gamma$.
{\edit Also} $\Gamma$ acts freely on $\Omega${\edit,} it contains no elliptics{\edit,} and therefore acts freely
on $\Omega_1$. By (1.3) of \cite{CLT1}  $\Gamma$ is discrete and therefore acts properly discontinuously on
$\Omega_1$. The map $\Phi_{-1}$ embeds $\Omega_M$ into $\Omega_1$ and, since
$\Phi_{-1}$ commutes with $\Gamma$, it follows that $\Gamma$ acts freely and properly discontinuously
on $\Omega_M$.
\end{proof}

Thus $M=\Omega_M/\Gamma$ is a properly convex manifold and 
there is a homeomorphism $F:\partial M\times[0,\infty)\to M$ covered by $\widetilde F$.  
{\bf Claim 4.}  $M=C$.

 First we show $C\subset M$. Since $ \interior(\Omega)\subset\Omega_1$ 
 it follows that $\interior(C)\subset\interior(M)$. 
By (\ref{smoothbdry}) there is 
a collar neighborhood $P\subset C$ of $\partial C$ with $\partial P=\partial C\sqcup Q$
and $Q$ is strictly{\edit-}convex. Let $R=\cl(C\setminus P)$ so $\partial R=Q$ and $R$
is a generalized cusp   contained in $M$. 
{\edit Thus $X=\cl(M\setminus R)\cong\partial X\times I$
 is a compact,}  and $P\setminus\partial C\subset X,$ so
it follows that $\partial C {  \subset X}\subset M$, thus  $C\subset M$.

 The intersection of an orbit under the flow $\Phi$ with $\Omega_M$, 
 and also its image in $M$, is called a {\em flow-line}.
Every flow-line  $\lambda$ in $M$ ends on $\partial M$ and is properly
 embedded. Moreover $C\cap\lambda$ is convex.
Since $\partial C$ separates $\partial M$ from the end of $M$,
 it follows that $\lambda\cap C$ is all of $\lambda$ except, possibly, a
bounded interval at the end of $\lambda.$ It follows that  
for all $t<0$ that $\Phi_t(C)\subset C$ so $C$ is backward invariant.
It now follows that $\Omega_1=\Omega$ and $M=C${\edit, and this implies $\Phi$ is compatible with $C$.}
\end{proof}

{\edit Now we know that $\Omega$ is a closed subset of $\RR^n$ we are ready to show the orbits of the translation group are properly embedded
convex hypersurfaces in $\RR^n$.}
\begin{proposition}\label{convexTorbit} Suppose $C=\Omega/\Gamma$ is a minimal generalized cusp and $\Gamma\subset\UT(n+1)$.
 Let  $T=T(\Gamma)$ be the translation group.
Then $C$ contains a minimal homogeneous cusp $C_T=\Omega_T/\Gamma$ and
 $T$ acts transitively on $\partial\Omega_T$.\end{proposition}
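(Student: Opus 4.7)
The plan is to produce a strictly convex $T$-orbit $S$ inside $\Omega$ and then take $\Omega_T:=\CH(S)$. By (\ref{gencuspcontainsRFcusp}), $C$ is a radial flow cusp for a radial flow $\Phi$ centralizing $\Gamma$; since $T=T(\Gamma)=\exp\langle\log\core(\Gamma,n+1)\rangle$ is built canonically from $\core(\Gamma,n+1)\subset\Gamma$ and $\Phi$ centralizes every element of $\Gamma$, continuity of $\exp$ forces $\Phi$ to centralize $T$ as well. By (\ref{transgrp}), $\Gamma_{0}:=\Gamma\cap T$ is a cocompact lattice in the connected nilpotent Lie group $T$; and since $\Omega/\Gamma_{0}$ is a finite cover of $C$ with aspherical closed $(n-1)$-dimensional boundary, Hirsch rank considerations give $\dim T=n-1$. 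Consequently $G:=T\cdot\Phi(\RR)$ is a connected Lie group of dimension $n$ acting on $\RPn$ with open orbits. Picking $p$ in the interior of $\Omega$ with trivial $T$-stabilizer, $S:=T\cdot p$ is an immersed smooth $(n-1)$-dimensional submanifold, and I would verify $S\subset\interior\Omega$ by an open--closed argument on the set $U:=\{t\in T:t\cdot p\in\interior\Omega\}\subset T$, using cocompactness of $\Gamma_{0}\backslash T$ together with the strict convexity of $\partial\Omega$ to close up $U$.

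The main obstacle is the strict convexity of $S$. The approach is to identify $S$ with a level set of the characteristic convexity function $c_{M}\colon\xi M\to\RR$ of \S\ref{charfnssection}, whose level sets in $\mathcal{C}\Omega$ are strictly Hessian-convex by (\ref{uniformconvexity}) and project to a foliation $\Fcal$ of $\Omega$ by strictly convex hypersurfaces. Although $c_{M}$ is only $T$-equivariant modulo an additive constant (for $A\in T\subset\UT(n+1)$ the function $c_{M}$ shifts by $-(n+1)^{-1}\log\det A$), the $T$-action on $\mathcal{C}\Omega$ still permutes the level sets, so $\Fcal$ is a $T$-invariant foliation. Each $T$-orbit is therefore contained in a single leaf of $\Fcal$; matching dimensions ($n-1=\dim S=\dim\text{leaf}$) together with the cocompact $\Gamma_{0}$-action and connectedness of leaves then forces $S$ to equal the leaf through $p$, and hence $S$ is strictly convex.

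Finally, I would set $\Omega_{T}:=\CH(S)\subset\Omega$, equivalently the union of $S$ with its backward $\Phi$-orbit (the concave side bounded by $S$). This is properly convex and $T$-invariant, with $\partial\Omega_{T}=S$ on which $T$ acts transitively by construction. The upgrade from $T$-invariance to full $\Gamma$-invariance of $\Omega_{T}$ is carried out by (\ref{virthomogcusp}), producing the minimal homogeneous cusp $C_{T}:=\Omega_{T}/\Gamma\subset C$, with minimality following from (\ref{RFendlemma}).
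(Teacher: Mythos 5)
Your overall architecture (produce a strictly convex $T$-orbit $S$, set $\Omega_T=\CH(S)$, then upgrade to $\Gamma$-invariance via (\ref{virthomogcusp})) matches the paper, but the two steps that carry the real content are both broken.

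First, the containment $T\cdot p\subset\Omega$ is not true for an arbitrary interior point $p$ with trivial stabilizer, and your open--closed argument cannot close up $U$: openness is clear, but for closedness you would need the orbit of $p$ to stay uniformly away from $\partial\Omega$, and strict convexity of $\partial\Omega$ gives no such thing. Since $\Omega$ is only $\Gamma$-invariant, not $T$-invariant, the $T$-orbit of a point close to $\partial\Omega$ will in general leave $\Omega$ (perturb a horoball $\Gamma$-equivariantly and take $p$ near the boundary). The statement is existential: one must choose $p$ deep in the cusp. The paper does this by showing the map $F:T\times\partial\Omega\to\RR^n/\Gamma$, $F(t,x)=\pi(t\cdot x)$, has compact image (using cocompactness of both $\Gamma\backslash T$ and $\Gamma$ on $\partial\Omega$) and picking $p$ with $\pi(p)$ outside that compact set; some such quantitative choice is unavoidable.

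Second, and more seriously, the strict convexity argument via the characteristic function does not work. The level sets of $c_\Omega:\mathcal{C}\Omega\to\RR$ are $n$-dimensional hypersurfaces in $\RR^{n+1}\setminus 0$ transverse to the radial flow; each one projects radially \emph{onto all of} $\Omega$, so they do not descend to a codimension-one foliation $\Fcal$ of $\Omega$. Moreover a $T$-orbit in the cone is not contained in a level set of $c$, precisely because $c(Ax)=c(x)-(n+1)^{-1}\log\det A$ shifts by a nonzero constant whenever $\det A\ne1$; ``permuting the level sets'' does not put any one orbit inside any one level set, and in any case the orbit has codimension $2$ in $\RR^{n+1}$ while the level set has codimension $1$. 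So there is no identification of $S$ with a leaf, and nothing in your argument rules out a non-convex orbit. The paper gets strict convexity by a different mechanism: take $\Omega_0=\cl(\CH(T\cdot x))$, observe that a closed properly convex set has an extreme point $y$ at which $\partial\Omega_0$ is strictly convex, and use homogeneity of the $T$-action to propagate strict convexity to the whole orbit $S=T\cdot y$ (which a separate topological argument then identifies with $\partial\Omega_T$). You need some version of this extreme-point/homogeneity step, or another genuine source of convexity, to make the proof go through.
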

 \begin{proof} By (\ref{gencuspcontainsRFcusp})  $C$  is a radial flow cusp for some flow $\flow$. 
Let $H$ be the stationary hyperplane of $\Phi$
and set $\RR^n=\RP^n\setminus H$. Since $T$ centralizes $\Phi${\edit,} it preserves $H${\edit,}
and acts affinely on $\RR^n$.
{\bf Claim.}  There is $x\in\Omega$ such that $T\cdot x\subset\Omega$.

\begin{proof}[Proof of claim]
Let $\pi:\Omega\longrightarrow C$ be the
covering space projection. 
There is a continuous map $F:T\times\partial\Omega\to\RR^n/\Gamma$ given by $F(t,x)=\pi(t\cdot x)$.
Since  $\partial C=\partial\Omega/\Gamma$ is compact{\edit,}
there is  a compact subset $D\subset\partial\Omega$ such that $\Gamma\cdot D=\partial\Omega.$
So $T\cdot\partial\Omega$=$T\cdot D$ because $\Gamma\subset T$.
There is compact $X\subset T$ such that $\Gamma X=T.$
So $T\cdot D=(\Gamma X)\cdot D$.
Hence $\image(F)=\pi(X\cdot D)$ because
$\pi(\Gamma\cdot x)=\pi(x)$. Thus ${\edit K=}\image(F)\subset \RR^n/\Gamma$
is compact{\edit,}  because it equals $F(X\times D)${\edit,} and $X\times D$ is compact. 
Choose $x\in\interior(\Omega)$ such that $\pi(x)\notin K$. {\edit Then $T\cdot x$ is connected, and disjoint from $\partial\Omega$.
By  (\ref{RFendlemma}) $\interior(\Omega)\subset\R^n$ is 
bounded by  $\partial\Omega$.
It follows that}   $T\cdot x\subset\Omega$. This proves the claim.
\end{proof}

The set $\Omega_{\edit T}=\cl(\CH(T\cdot x))\subset\Omega$ is properly convex and  $T$--invariant.
{\edit {\bf Claim} $\partial\Omega_{\edit T}$ is strictly convex.}
Since $\Omega_{\edit T}$ is a closed properly convex set in $\RR^n${\edit,} there is an extreme point
 $y\in\partial \Omega_{\edit T}$ at which $\partial \Omega_{\edit T}$ is strictly{\edit-}convex. 
Thus $\partial\Omega_{\edit T}$ is strictly{\edit-}convex at every point in the orbit $S=T\cdot y\subset\partial\Omega_{\edit T}$. 
 Then $\Omega_{\edit S}:=\CH(S)\subset\Omega_{\edit T}$ is properly
convex. Since $\Gamma\subset T$ it follows that $\Omega_{\edit S}$ is $\Gamma$-invariant.
Thus $C_{\edit S}=\Omega_{\edit S}/\Gamma$ is a generalized cusp and a submanifold of $C$.  It follows
from (\ref{Kpione}) that
 $\dim\Omega_{\edit S}=\dim\Omega$.
Moreover $S\subset \partial\Omega_{\edit T}$ so $S\subset\partial\Omega_{\edit S}$. 
Since $\Omega_{\edit S}=CH(S)$
 it follows that $S= \partial\Omega_{\edit S}{\edit=\partial\Omega_T}$ and $\Omega_{\edit S}=\Omega_{\edit T}$, {\edit which proves the claim.}
 \end{proof}

{\edit If $\Omega/\Gamma$ is a generalized cusp, by (\ref{convexTorbit})
 there is a homogeneous domain $\Omega_T\subset\Omega$
that is preserved by the finite index subgroup of $\Gamma\cap T(\Gamma)$. Next we show that $\Omega_T$ is preserved by all of $\Gamma$.}

\begin{lemma}\label{virthomogcusp} Suppose $C=\Omega/\Gamma$ is a minimal generalized cusp 
and $T=T(\Gamma)\subset\UT(n+1)$ and $\Gamma_0=T\cap\Gamma$.
Suppose  $\Omega/\Gamma_0$
contains a homogeneous cusp $\Omega_T/\Gamma_0$ and $\Omega_T$ is
preserved by $T$. Then $\Gamma$ preserves $\Omega_T$ so $C$ contains the homogeneous generalized cusp
$\Omega_T/\Gamma$.
\end{lemma}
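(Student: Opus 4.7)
The plan is to show $\gamma\Omega_T = \Omega_T$ for every $\gamma\in\Gamma$. First, $T = T(\Gamma)$ is stable under $\Gamma$--conjugation: by Definition~\ref{Thulldef}, $T = \exp\langle\log\,\core(\Gamma,n+1)\rangle$; Corollary~\ref{VnilVFG} makes $\core(\Gamma,n+1)$ characteristic in $\Gamma$, so conjugation by $\gamma\in\Gamma$ permutes $\log\,\core(\Gamma,n+1)$ setwise via the linear map $\mathrm{Ad}_\gamma$, preserves its linear span, and hence preserves $T$. For any $t\in T$, $t(\gamma\Omega_T) = \gamma(\gamma^{-1}t\gamma)\Omega_T = \gamma\Omega_T$ since $\gamma^{-1}t\gamma\in T$ preserves $\Omega_T$; so $\gamma\Omega_T$ is a $T$--invariant properly convex subset of $\Omega$ depending only on the coset $\gamma\Gamma_0$ in the finite group $\Gamma/\Gamma_0$. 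Its boundary $\partial(\gamma\Omega_T) = \gamma(T y) = T(\gamma y)$ is again a single $T$--orbit, using $\partial\Omega_T = T y$ from the proof of Proposition~\ref{convexTorbit}.

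Next I parametrize all $T$--invariant properly convex subdomains of $\Omega$ whose boundary is one $T$--orbit. Let $\Phi$ be the radial flow from Proposition~\ref{gencuspRF} that centralizes $T$; by Proposition~\ref{gencuspcontainsRFcusp} it is compatible with the radial-flow cusp structure on $\Omega/\Gamma_0$. Because $\Phi$ commutes with $T$, it sends $T$--orbits to $T$--orbits, and a $\Phi$--flowline through $y\in\partial\Omega_T$ meets each parallel $T$--orbit transversally in at most one point, by the strict-convexity argument in the proof of Lemma~\ref{RFendlemma}. Consequently every such subdomain equals $\Phi_s(\Omega_T)$ for a unique $s\in\R$, and the family $\{\Phi_s(\Omega_T)\}_{s\in\R}$ is totally ordered by inclusion. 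Applying this to each $\gamma\Omega_T$ yields a function $s:\Gamma/\Gamma_0\to\R$ with $\gamma\Omega_T = \Phi_{s(\gamma)}(\Omega_T)$.

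Finally, the finite set $F := \{\gamma\Omega_T : [\gamma]\in\Gamma/\Gamma_0\}$ is totally ordered by inclusion. Note $\Gamma_0 = T\cap\Gamma$ is normal in $\Gamma$ (since $\gamma\Gamma_0\gamma^{-1}\subset\gamma T\gamma^{-1}\cap\Gamma = T\cap\Gamma = \Gamma_0$), so left multiplication gives a well-defined action of $\Gamma/\Gamma_0$ on $F$, and each $\gamma\in\Gamma$ is a bijection of $\R P^n$ preserving inclusions, so the action is order-preserving. Any order-preserving action on a finite totally ordered set is trivial, so $\gamma\Omega_T = \Omega_T$ for all $\gamma\in\Gamma$. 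Thus $\Omega_T$ is $\Gamma$--invariant and $\Omega_T/\Gamma$ is the desired homogeneous cusp in $C$.

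The main technical obstacle will be the one-parameter classification of boundary-$T$-orbit subdomains in the second step: one must show that $\gamma y$ actually lies in the open $G$-orbit $G\cdot y$ (where $G = T\oplus\Phi(\R)$ acts with open orbit by Proposition~\ref{gencuspRF}), so that $\gamma\Omega_T$ is genuinely a $\Phi$--translate of $\Omega_T$. This reduces to the transversality analysis from Lemma~\ref{RFendlemma} together with the radial-flow compatibility, but deserves careful bookkeeping. If instead this step fails, one may need to invoke a crossed-homomorphism computation for $s$ on the finite quotient $\Gamma/\Gamma_0$ to conclude vanishing; however, the order-preserving argument above is much cleaner when the parametrization succeeds.
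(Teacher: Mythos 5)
Your opening and closing steps are fine: conjugation preserves $T$ because $\core(\Gamma,n+1)$ is characteristic (the paper gets the same normalization from uniqueness of the virtual e-hull, \ref{ehullunique}/\ref{transgrp}), and a finite group acting transitively by order-automorphisms on a finite totally ordered set does force a single orbit element. The gap is exactly where you flag it, and it is not bookkeeping: the claim that every translate $\gamma\Omega_T$ equals $\Phi_s(\Omega_T)$ for some $s$ requires $\gamma y$ to lie in the open orbit $W=G\cdot y$ of $G=T\oplus\Phi(\RR)$, and nothing in (\ref{gencuspRF}) or (\ref{RFendlemma}) gives this. Openness of $W$ does not imply $\Omega\subset W$; a priori $\gamma y$ could sit in a different $G$-orbit entirely, in which case $\gamma\Omega_T$ and $\Omega_T$ need not be comparable under inclusion. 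In fact the dichotomy one actually gets from convexity is \emph{disjoint or nested}: if $\gamma\Omega_T\cap\Omega_T\neq\emptyset$ and neither contains the other, a segment in $\gamma\Omega_T$ from a point inside $\Omega_T$ to a point outside must cross $\Fr(\Omega_T)=T\cdot y$, whence $T\cdot y\subset\gamma\Omega_T$ and $\Omega_T=\CH(T\cdot y)\subset\gamma\Omega_T$; and proper nesting is then impossible because iterating $\gamma$ would produce an infinite strictly decreasing chain inside a finite orbit. So the translates are pairwise equal or disjoint, and the entire content of the lemma is ruling out the disjoint case --- which your total-order argument silently assumes away.

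The paper closes this by a topological separation argument rather than a flow parametrization: it passes to a finite-index normal subgroup $\Gamma_1\subset\Gamma\cap T$ preserving each translate, so that $M=\Omega/\Gamma_1$ is a regular cover containing one embedded copy of the generalized cusp $P=\Omega_T/\Gamma_1$ per translate. Each $\partial P$ separates $\partial M$ from the end of $M$, so the region between $\partial M$ and any copy of $\partial P$ is compact; since each copy of $P$ is a noncompact product neighborhood of the end, two disjoint copies cannot coexist. You need either to supply this (or an equivalent) argument in place of your parametrization, or to genuinely prove that all of $\partial\Omega$ (hence $\gamma y$) lies in the single $G$-orbit $W$; your fallback suggestion of a crossed-homomorphism computation for $s$ does not help, because $s$ is only defined once the parametrization you are trying to justify is already in hand.
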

\begin{proof} By (\ref{gencuspcontainsRFcusp})
  $C^*=\Omega_T/\Gamma_0$
is a radial flow cusp and by (\ref{RFendlemma}) $\Omega_T\subset\RR^n$ is bounded by
 the strictly{\edit-}convex properly embedded hypersurface $\partial\Omega_T$.
By (\ref{transgrp}) $T=T(\Gamma)$
  is the unique translation group that contains $\Gamma$. 
  
 Since $\Gamma$ normalizes itself it follows that $\Gamma$ normalizes
  $T$ and therefore $\Gamma$ permutes the decomposition of $\RPn$ into $T$--orbits. 
  The domain $\Omega_T$ is foliated by $T$--orbits and $\Omega_T/T\cong[0,1)$.
  Since $\Gamma\cap T$ preserves $\Omega_T$ and $|\Gamma:\Gamma\cap T|<\infty$ it follows the
  $\Gamma$--orbit of $\Omega_T$ is a finite number of pairwise disjoint convex sets all contained
  in $\Omega$. Thus $\Gamma\cap T$ permutes these domains. There is a finite index subgroup
  $\Gamma_1\subset\Gamma\cap T$ that preserves each domain. We may assume $\Gamma_1$
  is normal in $\Gamma$.
  Thus $M=\Omega/\Gamma_1$ is a regular cover of $C$ that
   contains one copy of $P=\Omega_T/\Gamma_1$ for each domain.
 However $M$ and each copy of $P$ is a generalized cusp.
 Each copy of $\partial P$ separates $\partial M$ from the end of $M$. Since the copies of $P$ are
 disjoint
 there is only one copy of $P$
 and $\Gamma$ preserves $\Omega_T$.
 \end{proof}
 \begin{proof}[Proof of (\ref{sttdcusp})]  Suppose $C=\Omega/\Gamma$ is a generalized cusp {\edit of dimension $n$}.
 We may assume $C$ is minimal by  (\ref{mincusplem}).
 Since $\Gamma$ is virtually nilpotent,  it follows from
  (\ref{gencuspisVFG}) there is a finite index subgroup $\Gamma'<\Gamma$ 
  that is conjugate into $\UT(n+1)$. We will assume this conjugacy has been done.
Then $\widetilde C=\Omega/\Gamma'$ is a generalized cusp that is a finite cover of $C$ and
is minimal by (\ref{mincusplem}). 
It follows from (\ref{transgrp}) that $\Gamma'$ is a lattice in a connected upper-triangular Lie group $T=T(\Gamma)$.
 By (\ref{gencuspcontainsRFcusp}) it follows that $\widetilde{C}$ is a radial flow cusp 
 for a radial flow $\Phi$ with stationary hyperplane $H$. Let $\RR^n=\RPn\setminus H.$ By (\ref{RFendlemma}) $\Omega\subset \RR^n$ is a closed strictly{\edit-}convex set bounded
 by the strictly{\edit-}convex hypersurface $\partial\Omega$. By (\ref{convexTorbit})  there
 is a properly convex $\Omega_T\subset\Omega$ that is $T$-invariant and thus $\Gamma'$-invariant. By (\ref{virthomogcusp})
 $\Omega_T$ is preserved by all of $\Gamma$ hence $\Omega_T/\Gamma$ is a homogeneous
 cusp in $C$ and $\Gamma< \PGL(\Omega_T)$.  \end{proof}

 \begin{lemma}\label{deformC} Suppose $G$ is a connected group with $\dim G=n-1$. 
  For $x\in\RPn$  the subset of $\Hom(G,\GL(n+1,\RR))$ consisting of all $\rho$ with $\rho(G)\cdot x$ a  Hessian
 convex hypersurface is open. \end{lemma}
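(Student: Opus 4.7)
The plan is to fix $\rho_0$ with $S_0 := \rho_0(G)\cdot x$ a strictly convex hypersurface in $\RPn$, and exhibit a neighborhood of $\rho_0$ in $\Hom(G,\GL(n+1,\RR))$ on which this property persists. Since $\dim G = n-1 = \dim S_0$, the orbit map $\phi_{\rho_0}\colon G\to\RPn$, $g\mapsto\rho_0(g)\cdot x$, has differential of rank $n-1$ at $e$, and by equivariance of rank $n-1$ everywhere. After choosing an affine chart containing $x$ and a compact connected neighborhood $K\subset G$ of $e$, we may arrange that $\phi_{\rho_0}|_K$ embeds $K$ onto a compact strictly convex disc $D_0\subset S_0$ expressed as the graph of a smooth function with positive-definite Hessian.

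First, I would observe that $\rho\mapsto\phi_\rho|_K$ is continuous from $\Hom(G,\GL(n+1,\RR))$ into $C^{\infty}_w(K,\RPn)$. Being an embedding onto the graph of a function with positive-definite Hessian is a $C^2$-open condition on smooth maps $K\to\RPn$, so there is a neighborhood $\mathcal N$ of $\rho_0$ in $\Hom$ such that for each $\rho\in\mathcal N$ the map $\phi_\rho|_K$ is an embedding onto a compact strictly convex disc $D_\rho$ containing $x$.

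Next, I would propagate strict convexity by equivariance. For each $g\in G$ the translate $\rho(g)\cdot D_\rho=\phi_\rho(gK)$ is again a strictly convex disc, since strict convexity of a smooth hypersurface (equivalently, absence of line segments) is invariant under the projective transformation $\rho(g)$. Because $\{gK\}_{g\in G}$ covers $G$, the orbit $\phi_\rho(G)$ is everywhere locally a strictly convex hypersurface.

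The main obstacle is to upgrade this local statement to the global assertion that $\phi_\rho(G)$ is an embedded hypersurface bounding a strictly convex open region in $\RPn$. For this I would use that $S_0$ is already such a hypersurface: choose a tubular neighborhood $U$ of $S_0$ with smooth projection $\pi\colon U\to S_0$. For $\rho\in\mathcal N$ sufficiently close to $\rho_0$, the composition $\pi\circ\phi_\rho$ agrees on each translate $gK$ (for $g$ in a fixed compact portion of $G$) with a nearby embedding into $S_0$, so $\phi_\rho$ is itself an injective immersion modulo its discrete stabilizer $H_\rho$, and $\phi_\rho(G)$ lies inside $U$ as a graph-like perturbation of $S_0$. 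A connected, properly embedded, locally strictly convex hypersurface in $\RPn$ bounds a strictly convex open set (a projective-geometric incarnation of the van Heijenoort type theorem), which yields the required global strict convexity of $\phi_\rho(G)$ and finishes the proof.
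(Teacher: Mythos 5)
Your steps (1)--(4) are essentially the paper's proof. The paper argues that, since $G$ acts transitively on the orbit by projective maps, $S=\rho(G)\cdot x$ is strictly convex everywhere iff it is strictly convex at the single point $x$, and that strict convexity at $x$ is equivalent to definiteness of the quadratic form $Q(\rho)=\nu\cdot D^2_ef$ (the second fundamental form at $x$), which varies continuously with $\rho$; definiteness is an open condition. Your formulation via a graph with positive definite Hessian over a compact neighborhood $K$ of the identity, propagated by equivariance, is the same argument in slightly different clothing.

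Your step (5) goes beyond what the paper proves: the paper's proof of (\ref{deformC}) does not address global embeddedness of the perturbed orbit at all, and your attempt to supply it does not work as written. Since $G$ is non-compact, closeness of $\phi_\rho$ to $\phi_{\rho_0}$ in $C^\infty_w$ controls the orbit map only on compact subsets of $G$; you therefore cannot conclude that the entire orbit $\phi_\rho(G)$ stays inside the tubular neighborhood $U$ of $S_0$, nor that it is properly embedded or a graph-like perturbation of $S_0$ --- a priori it could leave $U$ and return, or accumulate on itself, and the appeal to a van Heijenoort-type theorem presupposes exactly the properness you have not established. In the paper the global structure of the orbit is obtained elsewhere, from the algebraic structure of the translation group and the radial flow (see (\ref{gencuspRF}), (\ref{cuspstab}) and the cone argument of (\ref{RFhessianisconvex})), not by perturbation. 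If you delete step (5) and read the lemma as the paper does --- openness of strict convexity at the basepoint, propagated by transitivity --- your proof is complete and coincides with the paper's; if you insist on proving global embeddedness inside this lemma, you need an argument that does not rest on $C^\infty_w$-closeness over the non-compact group.
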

 \begin{proof}
Suppose the map 
 $f:G\longrightarrow\RP^n$ given by $f(g)=(\rho(g))\cdot x$ has image a strictly{\edit-}convex hypersurface $S$.
 Because $G$ acts transitively on $S$ by projective maps it follows that
  $S$ is strictly{\edit-}convex everywhere if and only if it is strictly{\edit-}convex
 at the single point $x$. 
 {\edit Let $\nu$ be the normal to $S$ at $x$, and $e\in G$ the identity.
 Hessian
  convexity of $S$ at $x$ is equivalent to the quadratic form $Q=\nu\cdot D^2_{e}f$ being positive or negative definite.}
 This form $Q=Q(\rho)$ is a smooth function of $\rho${\edit, and the} set of definite quadratic forms is open in the set of
 all quadratic forms.
 \end{proof}
 
{\edit \begin{definition}\label{gencusphol} If $C$ is a generalized cusp of dimension $n$, then $\GC(C)$ is the subspace of $\Hom(\pi_1C,\GL(n+1,\RR))$ 
 consisting of holonomies of all generalized cusp structures on $C$, and $
\VFG({\edit C})$ is the subspace of all $\rho$ with $\rho(\pi_1 {\edit C})\in \VFG$\end{definition}}

\begin{theorem}[stability of generalized cusps]\label{cuspstab} Suppose ${\edit C}$ is
 a generalized cusp of dimension $n$. {\edit Then 
 \begin{itemize}
 \item[(1)] $\VFG({\edit C})$ is semi-algebraic.
 \item[(2)] $\GC(C)\subset\VFG(C).$ 
 \item[(3)]  $\theta:\Kleinian({\edit C})\rightarrow\VFG({\edit C})$ given by $\theta(\Omega,\rho)=\rho$ is a continuous open map.
 \item[(4)] $\GC(C)=\image(\theta)$.
 \end{itemize}}  \end{theorem}
\begin{proof} By (\ref{VnilVFG}) $\VFG({\edit C})$ is semi-algebraic. 
{\edit By definition $(\Omega,\rho)\in\Kleinian({\edit C})$ if and only if $\Omega/\Im(\rho)$
is a generalized cusp diffeomorphic to ${\edit C}$, thus $\GC(C)=\image(\theta)$.
Every generalized cusp is homogeneous by (\ref{sttdcusp}), and
 (\ref{gencuspisVFG}) implies the holonomy contains a finite index subgroup that is
 conjugate into an upper triangular group, so  $\GC(C)\subset \VFG({\edit C})$. It is obvious that $\theta$ is continuous, it only remains to show it is open.}
 
Set $H=\core(\pi_1{\edit C},n+1)$. By  (\ref{Thulldef}) , given $(\Omega,\rho)\in\Kleinian({\edit C})$
 the translation group is  $$T(\rho):=T(\rho(\pi_1{\edit C}))=\exp\langle\log (\rho H)\rangle.$$
 This is clearly a continuous function of $\rho$.

Choose $x\in\partial\Omega$. By (\ref{deformC}) for $\sigma\in \VFG({\edit C})$ close enough to $\rho$
the hypersurface $S=T(\sigma)\cdot x$ is  Hessian convex{\edit, and close to $\partial\Omega$.}
By (\ref{gencuspRF}) there is a radial flow $\Phi$ that is centralized by $T(\sigma)$ and the
group $G=T(\sigma)\oplus\Phi(\RR)$   has an open orbit $W$ in $\RPn$.
 Moreover $W$ is foliated by the strictly{\edit-}convex hypersurfaces
$S_t=\Phi_t(S)$.

After replacing $t$ by $-t$ we may assume for $t<0$ and close to $0$ that $S_t$ is on the convex
side of $S=S_0$. Let
$\Omega^+=\cup_{t\le0}S_t.$ Then $\partial \Omega^+=S_0$.
This set is preserved by $T(\sigma)$ and therefore by $\sigma(H)$. It is contained in a properly convex cone
 by the argument of
 (\ref{RFhessianisconvex}) using Figure~\ref{fig1}. 
 Hence  $\Omega(\sigma):=\CH(\Omega^+)$ is properly convex and $T(\sigma)$--invariant. The argument of claim 3 in (\ref{gencuspcontainsRFcusp}) shows that $\sigma(H)$ acts freely
and properly discontinuously on $\Omega(\sigma)$. Since $\Omega(\sigma)$ is $T(\sigma)$--invariant, 
$\partial\Omega(\sigma)$
 is  Hessian convex.
Thus $\Omega(\sigma)/\sigma(H)$ is a homogeneous generalized cusp.

It only remains to show that $\Omega(\sigma)$  is preserved by all of  $\sigma(\pi_1{\edit C})$. 
The argument is very similar to the proof of (\ref{virthomogcusp}).
The $\sigma(\pi_1{\edit C})$--orbit of $\Omega(\sigma)$ is finite because $|\pi_1{\edit C}M:H|<\infty$.
By (\ref{ehullunique}) $T(\sigma)$ is the unique virtual e-hull of $\sigma(\pi_1{\edit C})$.
Thus $\sigma(\pi_1{\edit C})$ preserves the decomposition of $\RPn$ into $T(\sigma)$--orbits.
Moreover $\Omega(\sigma)$ is a union of such orbits.
Thus if $g\in \pi_1{\edit C}$ then $(\sigma g)(\Omega(\sigma))$ is either
$\Omega(\sigma)$ or disjoint from $\Omega(\sigma)$. We need only look at finitely many such $g$.
Observe that
$\Omega(\sigma)$ is close to $\Omega(\rho)$ 
and $\rho(g)$ is close to $\sigma(g)$ and $\rho(\pi_1{\edit C})$ preserves $\Omega(\rho).$
Thus $\rho(g)$ preserves $\Omega(\rho)$ so  $(\sigma g)(\Omega(\sigma))$ intersects $\Omega(\sigma)$.
It follows that $(\sigma g)(\Omega(\sigma))=\Omega(\sigma)$.
\end{proof}

 We remark that when ${\edit C}\cong T^2\times[0,\infty)$ the subset of diagonal representations in $\GL(3,\RR)$
given by holonomies of generalized cusps is not closed. The boundary consists of
properly convex structures on ${\edit C}$ with $\partial {\edit C}$ {\edit flat, thus} not strictly{\edit-}convex. 


\begin{theorem}[Main Theorem]\label{mainthm}
Suppose $N$ is a compact connected $n$--manifold 
and $\Vcal= \cup_{i=1}^kV_i\subseteq \partial N$ is the union of some of the boundary components of $N.$
Assume $\pi_1V_i$
is virtually nilpotent for each $i$. Let $M=N\setminus\Vcal$.
Then the {\edit holonomy map $\Hol:\devGX_{ce}(M)\to\VFG(M)$ is continuous and open,
and $\VFG(M)$ is a semi-algebraic subset of $\Hom(\pi_1M,\GL(n+1,\RR))$.}\end{theorem}

\begin{proof} {\edit Continuity is obvious. That $\VFG(M)$ is semi-algebraic follows
from (\ref{VnilVFG}). Let $B_i$ be the end of $M$ corresponding to $V_i$ {\edit and $\Bcal=\cup B_i$}.
 Given a developing map $\dev\in \devGX_{ce}(M)$ the holonomy 
$\rho=\Hol(\dev)\in\VFG(M)$ by (\ref{gencuspisVFG}).
The restriction $(\dev|B_i)$ determines a generalized cusp  $B_i\cong\Omega_i/\Gamma_i$ 
  with holonomy $\Gamma_i=\rho(\pi_1B_i)$.
If $\rho'\in\VFG(M)$ is sufficiently close to $\rho$,
then by (\ref{cuspstab}) there are nearby generalized
cusps $B_i\cong\Omega'_i/\Gamma'_i$ with holonomy $\Gamma'_i=\rho'(\pi_1B_i)$. Thus
$x=(\rho,(\Omega_1,\Gamma_1),\cdots,(\Omega_k,\Gamma_k))$ and
$x'=(\rho',(\Omega_1',\Gamma'_1),\cdots,(\Omega'_k,\Gamma'_k))$ 
are close in $\enddata(M,\Bcal,\Kleinian)$.
By (\ref{kleinianedndatamapopen}) the map 
$$\enddatamap_{\Kleinian}:\devGX_{ce}(M,\proj)\to\enddata(M,\Bcal,\Kleinian)$$ is open,
so there is $\dev'$ close to $\dev$ with $\enddatamap_{\Kleinian}(dev')=x'$
and $\Hol(x')=\rho'$. Hence $\Hol$ is open.   }
\end{proof}

{\edit \begin{proof}[Proof of  (\ref{deformmfd})] By definition $\PC(M)=\image(\Hol)$, so the result follows from (\ref{mainthm}).\end{proof}}

We may avoid appealing to the theorem of Schoen and Yau used in the proof of (\ref{PCcontrol})
for the manifolds appearing in the main theorem using:
\begin{lemma}\label{controlfn} Every homogeneous cusp has an exhaustion function.
\end{lemma}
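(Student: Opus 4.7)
Let $C = \Omega/\Gamma$ be a homogeneous generalized cusp. By the results proved earlier in Section~\ref{gencusps} (in particular Propositions~\ref{convexTorbit} and \ref{gencuspcontainsRFcusp}), after conjugation we may assume $\Gamma \subset \UT(n+1)$, the translation group $T = T(\Gamma)$ acts transitively on $\partial\Omega$ by Hilbert--Finsler isometries, and there is a compatible radial flow $\Phi$ centralized by $T$. I take the \emph{flow time function} $f : C \to [0,\infty)$, defined by $f(\pi(x)) = t$ iff $\Phi_t(x) \in \partial\Omega$, as the candidate exhaustion function.

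Smoothness of $f$ is immediate: $\partial\Omega$ is a smooth hypersurface, being the $T$-orbit of any one of its points under smooth (in fact projective) transformations, and $\Phi$ is transverse to $\partial\Omega$. Properness is also immediate, since the level sets $f^{-1}(t) = S_t/\Gamma$ (with $S_t = \Phi_{-t}(\partial\Omega)$) are all diffeomorphic to the compact manifold $\partial C$.

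The substance of the argument is the uniform bound on $\|Df\|$ and $\|D^2 f\|$. Because $\Phi$ and $T$ commute, $T$ preserves every leaf $S_t$, so $f$ is $T$-invariant. Hence $Df$ and $D^2 f$ are $T$-invariant tensor fields, and since $T \subset \PGL(\Omega)$ acts by Hilbert--Finsler isometries and transitively on each $S_t$, the scalar functions $\|Df\|$ and $\|D^2 f\|$ are constant on each leaf. It therefore suffices to bound these constants as $t \in [0,\infty)$ varies, which reduces the problem to a one-dimensional asymptotic question along a single flow line $\gamma(t) = \Phi_{-t}(x_0)$ with $x_0 \in \partial\Omega$.

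The main obstacle is this final asymptotic step, since $\Phi$ itself is not an isometry of the Hilbert--Finsler norm. My plan is to work in the normal-form coordinates supplied by (\ref{gencuspRF})---the parabolic form $\Phi_t(x) = x - t e_1$ or the hyperbolic form $\Phi_t(x) = e^{-t} x$---and use the cross-ratio formula for the Hilbert--Finsler norm, tracking explicitly how the endpoints on $\partial\Omega$ of the chord containing $\gamma$ vary with $t$ to verify that the norm of the flow generator $V$ and the Finsler distance from $V$ to $TS_t$ both tend to finite positive limits. An alternative, more conceptual route is to equip the open orbit $W = G\cdot\partial\Omega \supset \Omega$, where $G = T \oplus \Phi(\R)$, with a left-$G$-invariant Riemannian metric; then $f$ is affine in left-invariant coordinates on $G$ and has manifestly bounded gradient and Hessian in that metric, after which one transfers the bounds to the Hilbert--Finsler norm using the bi-Lipschitz comparison with the characteristic Riemannian metric given by Corollary~\ref{boundedcurvature}.
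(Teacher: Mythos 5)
Your $T$-invariance reduction (leafwise constancy of $\|Df\|$ and $\|D^2f\|$) is correct, but the candidate function itself fails: the flow parameter $t$ is a projective parameter, not a metric one, and the two differ by an exponential. This is already visible for an ordinary horocusp in $\HH^n$. Take $q(x)=-x_0^2+x_1^2+\cdots+x_n^2$, the null vector $\nu=(1,0,\dots,0,1)$, the parabolic radial flow $\Phi_t(x)=x+t\,q(x,\nu)\nu$ centered at $[\nu]$, and the horoball $\Omega=\{\beta\le 0\}$, where $\beta=\log\bigl(|q(x,\nu)|/\sqrt{-q(x)}\bigr)$ is the Busemann function. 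Then $q(\Phi_tx,\nu)=q(x,\nu)$ and $q(\Phi_tx)=q(x)+2t\,q(x,\nu)^2$, so $\beta(\Phi_{-t}y)=-\tfrac12\log(1+2t)$ for $y\in\partial\Omega$, and hence the flow-time function is $\widetilde T=\tfrac12(e^{2d}-1)$, where $d=-\beta$ is the hyperbolic distance to $\partial\Omega$. Thus $\|D\widetilde T\|\sim e^{2d}\to\infty$; equivalently the Hilbert--Finsler norm of the flow generator decays like $e^{-2d}$, so the ``finite positive limit'' your plan asks you to verify is false. The same defect undermines your second route: a left-$G$-invariant Riemannian metric on the open orbit $W$ cannot be uniformly bi-Lipschitz to the Hilbert metric of $\Omega$, precisely because $\Phi$ is not a Hilbert isometry of $\Omega$, and (\ref{boundedcurvature}) compares the Hilbert metric only with the characteristic Hessian metric of $\Omega$, not with a $G$-invariant metric.

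The paper avoids this by parameterizing the foliation metrically rather than projectively: its exhaustion function is induced by $F(x)=d_\Omega(x,T\cdot y)$, the Hilbert distance to a fixed leaf (set equal to $0$ on the boundary side of that leaf). A distance function is automatically $1$-Lipschitz, so $\|Df\|\le1$ comes for free, and the Hessian bound is obtained not by an asymptotic computation along a flowline but by contradiction via Benz\'ecri compactness: if $\|D^2f_k\|_{x_k}\to\infty$, one recenters each domain at $x_k$ in Benz\'ecri position, passes to Hausdorff limits of the domains and of the groups, and obtains a smooth limiting function with finite Hessian. If you want to salvage your approach, replace flow time by Hilbert distance to a leaf (or reparameterize your level sets $S_t$ by arclength along a fixed flowline); you will still need a compactness argument of Benz\'ecri type, or a genuine case-by-case computation, to control the second derivative.
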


\begin{proof} 
Suppose  ${\edit C}=\Omega/\Gamma$ is a homogeneous cusp 
and $T=T(\Gamma)$ is the translation subgroup.
Then $\Omega$ has a codimension--1 foliation by $T$--orbits that covers a smooth foliation of ${\edit C}$. 
Pick $y$ in the interior of $\Omega$ and define
$F:\Omega\to\R$ by $F(x)=d_{\Omega}(x,T\cdot y)$ if $T\cdot y$ separates $\partial\Omega$ from $x$, and 
define $F(x)=0$ otherwise.
 Then $F$ covers a map
$f:{\edit C}\to \R$ 
and $Y=f^{-1}(0)$ is a compact collar neighborhood of $\partial {\edit C}$ and
$f(x)=d(y,Y)$ and $f^{-1}(t)$ is a leaf of the foliation of ${\edit C}$ for $t>0$.

 It is clear
that $\| df\|\le 1$ when $f>0$. Thus it suffices to show that
$\|D^2f\|$ is bounded.
Suppose there is a sequence $({\edit C}_k,f_k,x_k)$ such that $\|D^2f_k\|_{x_k}>k$. Then ${\edit C}_k=\Omega_k/\Gamma_k$ and
$\Gamma_k$ is a lattice in $G_k=\PGL(\Omega_k)$. We may assume all the $\Omega_k$ are in Benzecri position and $0$
covers $x_k$. We may also assume $\Omega_k\to\Omega$ in the Hausdorff topology. Then $G_k\to G\subset\PGL(\Omega)$.
The $T$ orbits are a smooth foliation of $\Omega$ and we define a smooth function
$F:\Omega\longrightarrow\R$ using $y=0$ as above. 
Then $F_k$ converges to $F$ in $C^{\infty}$ on compact sets.
But $\|D^2f_k\|=\|D^2F_k\|\to\infty$ contradicts $\| D^2F\|<\infty$ because $F$ is smooth.\end{proof}

\section{Three dimensional generalized cusps}\label{3mfd}

 An orientable three-dimensional generalized
  cusp is diffeomorphic to $T^2\times[0,\infty)$, and Leitner \cite{AL} shows {\edit
  that the holonomy
   is conjugate into a unique group of the form below}
with $\beta\ge \alpha>0$  fixed, where $n$ is the number of non-trivial weights:
$$
C_0=\bpmat 1 & s & t  &  \frac{s^2+t^2}{2} \\
0 &1  & 0 &s\\
0 & 0&1 &t \\
0 & 0 & 0 & 1\epmat  
\qquad
 C_1=\bpmat e^s & 0 & 0  &  0 \\
0 & 1  & t  & \frac{1}{2}t^2-s\\
0 & 0&1 &t\\
0 & 0 & 0 & 1\epmat 
$$

$$
C_2(\alpha)=\bpmat e^s & 0 & 0  &  0 \\
0 & e^t  & 0 &0\\
0 & 0& 1 & -t-\alpha s\\
0 & 0 & 0 & 1\epmat \qquad
C_3(\alpha,\beta) =\bpmat e^s & 0 & 0  &  0 \\
0 &e^t  & 0 &0\\
0 & 0&e^{-\alpha s-\beta t} &0\\
0 & 0 & 0 & 1\epmat
$$

\if0  \begin{figure}[ht]	 
\begin{center}
	 \psfrag{a}{$C_0$ point}
	 \psfrag{b}{$C_1\quad S^1$}
	 \psfrag{c}{$C_2$ \& $C_3$\quad $T^2$}
	 \psfrag{e}{$\partial M$}
	 \psfrag{d}{$\partial_{\infty}\overline M$}
	 \psfrag{f}{$0$}
	 \psfrag{g}{$\infty$}
		 \includegraphics[scale=0.8]{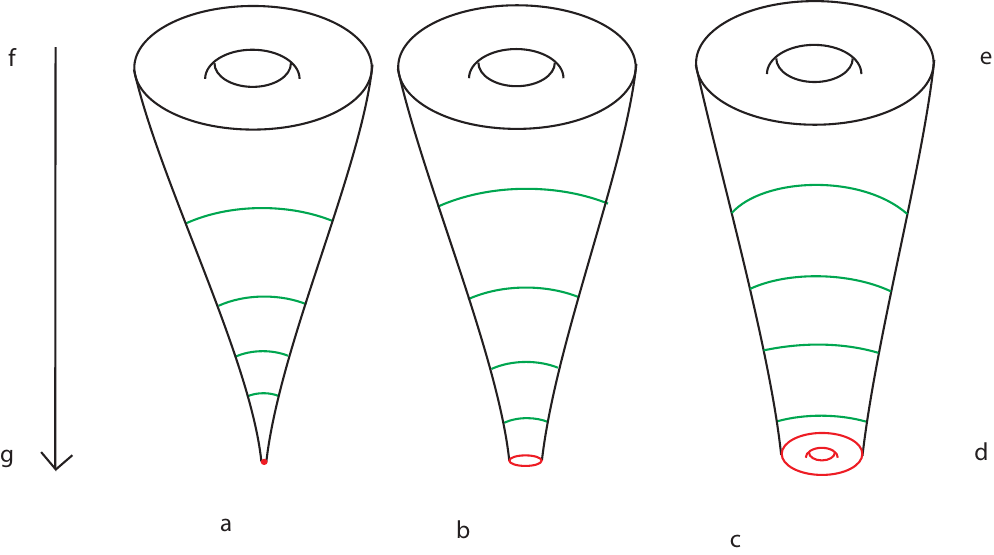}
\end{center}
  \caption{Generalized cusps in dimension $3$}
\end{figure}
\fi
   \begin{figure}[ht]	
   \begin{center} 
		 \includegraphics[scale=0.9]{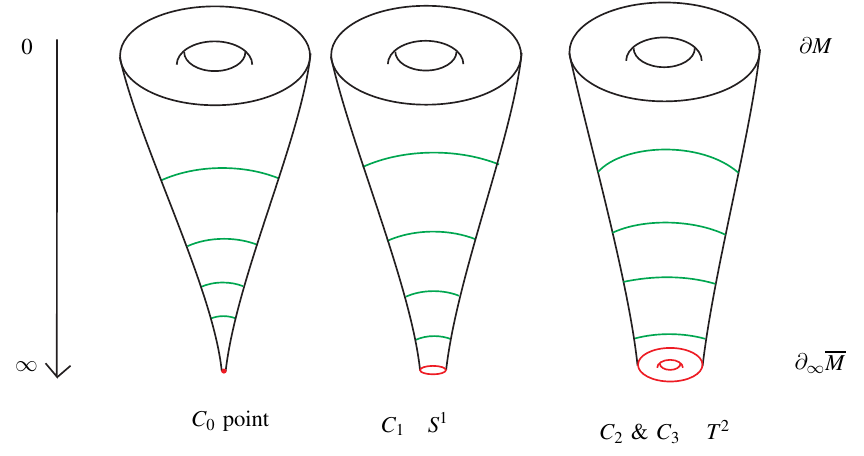}
\end{center}
  \caption{Generalized cusps in dimension $3$}
\end{figure}

 A related statement, due to Benoist, is in (2.7) of  \cite{BenoistIV}.
 There is a {\em compact}, properly convex domain $\Omega_n=\Omega_n(\alpha,\beta)$ preserved by $C_n=C_n(\alpha,\beta)$ and $\partial\Omega_n=A\sqcup B$ where $A=C_n\cdot x$ is an orbit and $B$ is a simplex
contained in a projective
hyperplane. If $\Gamma_n\subset C_n$ is a lattice then $\overline{M}=\Omega_n/\Gamma_n$ is a compactification
of the generalized cusp $M=(\Omega_n\setminus B)/\Gamma_n$ obtained by adding
$\partial_{\infty}\overline M=B/\Gamma$ which is
a point for $C_0$, a circle for $C_1$, and a torus for $C_2$ or $C_3$. The group $C_n$ is a  {\em translation
group} and is contained  in  the cusp Lie group  $\PGL(\Omega_n)$.

The group $C_0$ is conjugate into $PO(3,1)$ and
 contains the holonomy of a cusp of a hyperbolic 3-manifold. 
Ballas \cite{SB} found a lattice in  $C_1$  that is the holonomy of a generalized cusp  for a properly convex
structure on the figure eight knot complement. 
The groups $C_3(\alpha,\beta)$ are diagonal affine groups that satisfy
the {\em uniform middle eigenvalue condition} of Choi \cite{choiends}. Gye-Seon Lee found lattices
in some of these groups that are holonomies of generalized cusps for a 
properly convex structure on the figure eight knot complement.
 At the
time of writing it is not known if there is
 3-manifold that admits a finite volume hyperbolic structure 
 and also a properly convex structure that is a lattice in some $C_2(\alpha)$. 
 The classification of generalized cusps in all dimensions is given in Ballas, Cooper and Leitner \cite{BCL}.

\section{Convex Smoothing}\label{smoothing}
 We are concerned with various kinds of convexity. A  function $f:(a,b)\to\RR$ is {\em convex} if  $$\forall (c,d)\subset (a,b)\ \forall t\in(0,1)\quad f(tc + (1-t)d)\le tf(c)+(1-t)f(d)$$ and  {\em strictly{\edit-}convex}
if the above inequality is always {\em strict}.
It is  {\em Hessian-convex} 
if $f$ is smooth and $f''>0$ everywhere. A convex function is {\em strictly{\edit-}convex} at $p\in(a,b)$ if the graph of $y=f(x)$ intersects the tangent line
at $x=p$ at the single point $(p,f(p))$. 
 
 Each of these definitions extends to a  function $f:M\to\RR$ on an affine manifold $M$  by requiring
 its restriction to each line segment in $M$ to have the corresponding property. In the case of Hessian convex we also require $f$ is smooth. It follows
 that if a convex
 function  $f:M\to\RR$ is strictly{\edit-}convex at the point $p\in M$, then there is 
  hyperplane $H$  and a neighborhood $U\subset M$ of $p$ such the graph of $f|U$ intersects $H$ only at $p$.
For affine manifolds, we show how to approximate a convex function {\em which is strictly{\edit-}convex somewhere}
 by a smooth, Hessian-convex one. 
 
 The main application is that given 
a projective manifold which has a convex boundary that is strictly{\edit-}convex at some point,
we can shrink the manifold slightly to produce a submanifold with
 Hessian-convex boundary: locally the graph of a Hessian-convex function. 
 One might imagine using sandpaper to smooth the boundary and produce a submanifold with smooth
strictly{\edit-}convex boundary.
  
The idea is to improve a convex function which is already Hessian-convex on some open subset,
by changing it  in a small convex set $C$, and leaving it unchanged outside $C$. This is done
so that it is Hessian-convex inside a slightly smaller convex set $C^-\subset C$, and also Hessian-convex
at any point where it was previously Hessian-convex. In this way the problem is reduced
to a local one in Euclidean space.

  Greene and Wu \cite[Theorem 2]{Gw2},
and also 
  \cite{GW1}, showed that on a Riemannian manifold, 
 any  function $f$  with the property  that locally there is a function
 $g$ with positive definite Hessian such that
  $f-g$ is convex along geodesics ({\em\bf they} call $f$ {\em strictly{\edit-}convex})
  can be uniformly approximated by smooth, Hessian-convex functions.
  Smith \cite{Smith} gives an example, for each $k\ge 0$, 
 of a $C^k$ convex function on a non-compact 
 Euclidean surface which is not approximated by a $C^{k+1}$ convex function.

A function $f$ is   {\em convex down} if $-f$ is convex. This
 means secant lines lie {\em below} the graph: $tf(a)+(1-t)f(b)\le f(ta+(1-t)b) $ for all $a,b$ and $0\le t\le 1$. 
Equivalently the set of points {\em below} the graph of $f$ is convex.

If $f,g$ are smooth convex down functions, then $\min(f,g)$ is convex down, but need not be smooth at points
where $f=g$.
We construct a smooth approximation $m^{\kappa}$ on ${\mathbb R}_+^2$ which 
 agrees with $\min$ outside a certain neighborhood of the diagonal and has good convexity properties.
 
 \begin{lemma}[smoothing min]\label{lem:smoothing min}
 Given $\kappa\in(0,1)$ there is a smooth function  
 $m^{\kappa}:{\mathbb R}_+^2\longrightarrow{\mathbb R}_+,$ which is  convex-down and  non-decreasing in
 each variable  ($m^{\kappa}_x\ge0, m^{\kappa}_y\ge 0$), 
 such that if $x\le\kappa y$ or $y\le\kappa x,$ then
 $m^{\kappa}(x,y)=\min(x,y)$.  Moreover $m^{\kappa}$ is linear along rays: 
 $m^{\kappa}(tx,ty)=t\cdot m^{\kappa}(x,y)$ for $t\ge 0$. It follows
  that if $f,g:C\longrightarrow {\mathbb R}_+$ are convex down, then so is 
 $h(x)=m^{\kappa}(f(x),g(x))$.
\end{lemma}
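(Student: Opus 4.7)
The plan is to build $m^{\kappa}$ as a positively $1$-homogeneous function whose unit super-level set $\{m^{\kappa}\ge 1\}$ is a smooth convex region in $\mathbb{R}_+^2$ obtained from the L-shaped super-level set $\{\min\ge 1\}=\{x\ge 1,y\ge 1\}$ by rounding the corner at $(1,1)$. Set $\alpha=\arctan\kappa\in(0,\pi/4)$. I first construct a smooth convex curve $\Gamma\subset\mathbb{R}_+^2$ that agrees with the horizontal ray $\{y=1,\; x\ge 1/\kappa\}$ in one region, with the vertical ray $\{x=1,\; y\ge 1/\kappa\}$ in the other, and joins the two endpoints $(1/\kappa,1)$ and $(1,1/\kappa)$ by a smooth convex arc tangent to these rays to infinite order at the join points. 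This is a standard bump-function construction: one may take the arc to be the graph of a function $y-1=\varphi(x-1/\kappa)$ on $[1,1/\kappa]$ where $\varphi$ is chosen smooth, decreasing, convex, equal to $0$ at $x-1/\kappa = 0$ with all derivatives vanishing there, and matching the vertical line to infinite order at the other end. Writing $\Gamma$ in polar as $r=\rho(\theta)$ (possible since $\Gamma\subset\{x\ge 1,y\ge 1\}$ is star-shaped from the origin, meeting each ray from $0$ in one point), set $g(\theta)=1/\rho(\theta)$ and define
$$m^{\kappa}(x,y):=\sqrt{x^2+y^2}\,g\bigl(\arctan(y/x)\bigr)=r\,g(\theta).$$

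For $\theta\in(0,\alpha]$ the curve $\Gamma$ lies on $y=1$, so $\rho(\theta)=1/\sin\theta$, giving $g(\theta)=\sin\theta$ and hence $m^{\kappa}=r\sin\theta=y=\min(x,y)$ in that region $y\le\kappa x$; similarly $m^{\kappa}=x=\min(x,y)$ on $x\le\kappa y$. Smoothness of $m^{\kappa}$ on $\mathbb{R}_+^2$ follows from smoothness of $g$ on $(0,\pi/2)$ and of the polar coordinate change. Concavity (i.e.\ convex-down) reduces, by $1$-homogeneity and positivity, to convexity of the super-level set $\{m^{\kappa}\ge 1\}$, which is exactly the convex region bounded by $\Gamma$. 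Monotonicity in each variable follows because $\{m^{\kappa}\ge c\}\subset\{x\ge c, y\ge c\}$ for each $c>0$, so the inward normal to any smooth boundary point of this convex region has non-negative components, and $\nabla m^{\kappa}$ at such a point is a positive multiple of this inward normal.

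Finally, if $f,g\colon C\to\mathbb{R}_+$ are convex-down, then for $p,q\in C$ and $t\in[0,1]$ I compute
\begin{align*}
h(tp+(1-t)q)&=m^{\kappa}\bigl(f(tp+(1-t)q),\,g(tp+(1-t)q)\bigr)\\
&\ge m^{\kappa}\bigl(tf(p)+(1-t)f(q),\,tg(p)+(1-t)g(q)\bigr)\\
&\ge t\,m^{\kappa}(f(p),g(p))+(1-t)\,m^{\kappa}(f(q),g(q))=th(p)+(1-t)h(q),
\end{align*}
where the first inequality uses convex-down-ness of $f,g$ together with the fact that $m^{\kappa}$ is non-decreasing in each variable, and the second uses concavity of $m^{\kappa}$. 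The main obstacle is the explicit construction of $\Gamma$ with infinite-order tangency at the two corners — without this the resulting $g$ is only $C^1$ and $m^{\kappa}$ fails to be smooth on the two cones $y=\kappa x$ and $x=\kappa y$. This is handled by the standard trick of defining the rounding profile via the integral of a convex smooth bump that vanishes to all orders at the endpoints, so it is a purely local, one-dimensional issue and does not interact with the global convexity of $\Gamma$.
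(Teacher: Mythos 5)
Your construction is correct and is essentially the same as the paper's: both build $m^{\kappa}$ as the positively $1$-homogeneous extension of a smoothed one-dimensional profile of $\min$ (the paper smooths $k(t)=\min(t,1-t)$ along the cross-section $x+y=1$ and sets $m(x,y)=(x+y)K(x/(x+y))$, while you smooth the level curve $\{\min=1\}$ and use the polar/gauge description), and the final composition argument for $h$ is identical. The verifications of concavity and monotonicity differ only cosmetically --- convexity of the super-level set and the normal cone in your version, versus the cone-over-a-convex-arch picture and a direct computation of $\partial m/\partial x$ on $x+y=1$ in the paper.
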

\begin{proof}  
 On ${\mathbb R}_+^2$  $$\min(x,y)=(x+y)\cdot k(x/(x+y)),\qquad\text{where } k(t)=\min(t,1-t).$$
Choose $\delta$ so that $\kappa=\delta/(1-\delta).$ Then $\delta\in(0,1/2)$. Let $K:[0,1]\longrightarrow[0,1]$ 
be a convex-down smooth function that agrees with $k$ 
outside $(\delta,1-\delta)$.
Define $m:{\mathbb R}_+^2\longrightarrow{\mathbb R}$ by
 $$m(x,y)=(x+y)\cdot K(x/(x+y)).$$
   Clearly $m$ is linear along rays. If $x/(x+y)\le\delta,$ then $m(x,y)=x$. This happens when $x\le\kappa y$.
 Similarly $m(x,y)=y$ when $y\le\kappa x$, thus
 $$m(x,y)=\min(x,y)\quad \text{if}\quad x\le\kappa y\ \text{or\ } y\le\kappa x.$$
 The subset of ${\mathbb R}_+^2$ where neither $x\le \kappa y$ nor
 $y\le\kappa x$ is called the {\em transition region}. Outside the transition region
 $m=\min$.

The graph of $m$ is a convex-down surface above ${\mathbb R}^2_+$ 
 that is a union of rays starting at the origin. 
  One can picture the graph of $m$: it is the cone from the origin of the convex-down arch
that is the part of the graph lying above $x+y=1$. This arch is given by $K(x)$.
Since $K(x)$ is convex down, the graph of $m$ is convex-down; though in the radial direction it is, of course, linear.

 This surface is comprised of three 
 parts. The central part is curved down. The other two parts are  sectors of flat planes, one
 containing the $x$--axis and the other containing the $y$--axis.

We claim $m(x,y)$ is a non-decreasing function of each variable.  This is clear on the two parts of the graph
of $m$ that are flat, since they are planes containing either the $x$-axis or the $y$-axis.
Now 
$$ m_x(a,b)=\frac{\partial m}{\partial x}=K(a/(a+b)) + (a+b)\cdot b(a+b)^{-2}K'(a/(a+b)).$$
Since $m_x(ta,tb)=m_x(a,b)$ we may assume $a+b=1.$  Then
$$ m_x(a,b)=K(a) + (1-a)K'(a)=:c.$$
 The point $(x,y)=(1,c)$ lies on the tangent line to the
graph of $y=K(x)$ at $x=a$. Since this graph is convex down, and underneath the graph of $y=k(x)$, it follows that $c\ge 0$. This is best
seen by staring at a diagram.
Similar calculations work for $m_y$. This proves the claim.

Next we deduce that $h$ is convex-down using these two properties of $m$, where
$$h(ta+(1-t)b)=m(f(ta+(1-t)b),g(ta+(1-t)b)).$$
Since  $m_x\ge 0$ and $f$ is convex-down
$$m(f(ta+(1-t)b),g(ta+(1-t)b))\ge m(tf(a)+(1-t)f(b),g(ta+(1-t)b))$$
Similarly  $m_y\ge 0$ and $g$ is convex-down
$$ \begin{array}{rcl}
m(tf(a)+(1-t)f(b),g(ta+(1-t)b))& \ge & m(tf(a)+(1-t)f(b),tg(a)+(1-t)g(b))\\
 & = & m(t(f(a),g(a))+(1-t)(f(b),g(b))).\end{array}$$
Finally since $m$ is convex down
$$\begin{array}{rcl}
m(t(f(a),g(a))+(1-t)(f(b),g(b))) & \ge & t\cdot m(f(a),g(a))+(1-t)m(f(b),g(b))\\
  & = &t\cdot h(a)+(1-t)h(b).{\hspace{4.8cm}\qedhere}\end{array}$$
\end{proof}

\begin{corollary}[relative convex smoothing]\label{relativesmooth} Suppose $C\subset{\mathbb R}^n$ is a compact convex set with 
nonempty interior and $C^-$ is a compact convex set in the interior of $C$. Suppose
$f:C\longrightarrow{\mathbb R}$ is a non-constant, 
convex function, which is Hessian-convex on a (possibly empty)
subset $S\subset C$. Assume $f|\partial C=0$. Then there is a convex function $F:C\longrightarrow{\mathbb R}$
such that $F$ is Hessian-convex on $S\cup C^{-}$ and $f=F$ on some neighborhood of $\partial C$.
\end{corollary}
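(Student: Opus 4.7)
My plan is to combine $f$ with a small explicit Hessian-convex paraboloid via the smoothed-min $m^{\kappa}$ of Lemma~\ref{lem:smoothing min}, feeding in the convex-down negatives $-f$ and $-\phi$. First note that $f<0$ on $\interior(C)$: if $f$ vanished at some interior $x_{0}$, convexity of $f$ along every segment from $x_{0}$ to $\partial C$ would force $f\equiv 0$, contradicting non-constancy. Hence $m_{1}:=-\max_{C^{-}}f>0$ and $|f|\ge m_{1}$ on $C^{-}$.

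Fix $\kappa\in(1/2,1)$ and $\delta_{0}\in(0,\kappa m_{1})$, choose $x_{0}\in\interior(C^{-})$, let $R=\max_{x\in C}\|x-x_{0}\|$, and pick $\epsilon>0$ with $\epsilon R^{2}<\delta_{0}/2$. Set
\[
\phi(x)=\epsilon\|x-x_{0}\|^{2}-\delta_{0},\qquad F(x):=-m^{\kappa}\bigl(-f(x),-\phi(x)\bigr).
\]
Then $\phi$ is Hessian-convex on $C$ with $|\phi|\in[\delta_{0}/2,\,\delta_{0}]$, so $|\phi|<\kappa|f|$ throughout $C^{-}$ while $|\phi|\ge\delta_{0}/2>0$ throughout $C$. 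Lemma~\ref{lem:smoothing min} applied to the convex-down pair $(-f,-\phi)$ gives that $F$ is convex on $C$. Wherever $|\phi|\le\kappa|f|$ the smoothed min returns its second argument, so $F=\phi$; this holds on $C^{-}$, and hence $F$ is Hessian-convex there. Wherever $|f|\le\kappa|\phi|$ the smoothed min returns its first argument, so $F=f$; since $|\phi|\ge\delta_{0}/2$ and $f|_{\partial C}=0$, this holds on an open neighborhood of $\partial C$.

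What remains is Hessian-convexity of $F$ at the points of $S$ that fall in the transition band of $m^{\kappa}$. Writing $H=m^{\kappa}$, $u=-f$, $v=-\phi$, the chain rule produces
\[
D^{2}F = H_{u}\,D^{2}f + H_{v}\,D^{2}\phi + N,
\]
where $N(w,w)=-\bigl(H_{uu}(Df\cdot w)^{2}+2H_{uv}(Df\cdot w)(D\phi\cdot w)+H_{vv}(D\phi\cdot w)^{2}\bigr)\ge 0$ because $H$ is convex-down, and $H_{u},H_{v}\ge 0$ by the monotonicity in Lemma~\ref{lem:smoothing min}. From the explicit expressions $H_{u}=K(t)+(1-t)K'(t)$ and $H_{v}=K(t)-tK'(t)$ (with $t=u/(u+v)$) one checks that $H_{v}>0$ on the open transition region, $H_{v}=1$ in the $F=\phi$ flat regime, and $H_{v}=0$, $H_{u}=1$ in the $F=f$ flat regime. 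Consequently: at every $x$ in the open transition region $D^{2}F\ge H_{v}\,D^{2}\phi>0$ because $\phi$ is Hessian-convex; at every $x$ in the closed $F=\phi$ regime $D^{2}F=D^{2}\phi+N>0$; and at every $x\in S$ in the closed $F=f$ regime $D^{2}F=D^{2}f+N>0$ by the hypothesis on $S$. Thus $F$ is Hessian-convex at every point of $S\cup C^{-}$.

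The principal obstacle is the locus $\{H_{v}=0\}$ separating the transition region from the $F=f$ flat regime: there the term $H_{v}\,D^{2}\phi$ vanishes, so Hessian-convexity must be inherited from $f$ rather than from $\phi$. The constants $\kappa,\delta_{0},\epsilon$ are chosen precisely so that $\{H_{v}=0\}$ sits inside the $F=f$ regime (where the hypothesis on $S$ supplies what is needed) while $C^{-}$ sits strictly inside the $F=\phi$ regime.
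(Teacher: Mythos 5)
Your proof is correct and follows essentially the same route as the paper's: both define $F=-m^{\kappa}(-f,-g)$ for an explicit Hessian-convex $g$ arranged so that $C^{-}$ lies in the $F=g$ regime and a neighborhood of $\partial C$ lies in the $F=f$ regime, and both verify positive-definiteness of $D^{2}F$ via the same second-derivative expansion using the concavity and monotonicity of $m^{\kappa}$. The only quibble is your assertion that $H_{v}>0$ throughout the open transition band, which is not automatic from the lemma's construction of $K$ unless one chooses $K$ strictly concave there; it is in any case dispensable, since at a transition point of $S$ with $H_{v}=0$ one has $H_{u}=K'>0$ and $D^{2}F\ge H_{u}\,D^{2}f>0$.
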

\begin{proof} Observe $f<0$ on the interior of $C$. Let $g$ be a Hessian-convex function on ${\mathbb R}^n$ which
is negative everywhere on $C$ and $g\ge f/2$ 
everywhere on $C^-$. 
Since $f$ is not identically zero this can be done with, for example, 
$g(x)=\alpha\|x\|^2+\beta$  with suitable constants.

 For $\kappa\in(0,1/2)$ define $F(x)=-m^{\kappa}(-f(x),-g(x))$ 
 {\edit then} $F(x)=\max(f(x),g(x))$ except when $f(x)$
  is close enough to $g(x)$, depending on $\kappa$. Since $g<f=0$ on $\partial C$
  it follows that $F=f$ on some neighborhood of $\partial C$. Moreover $F=g$ on $C^-$
  and therefore $F$ is Hessian-convex on $C^-$.
  
  By (\ref{lem:smoothing min}) $F$ is convex. Since $m=m^{\kappa}$ and $g$ are smooth and the composition
  of smooth functions is smooth, it follows $F$ is smooth
 on $S$. It only remains to show $D^2F$ is positive definite on $ S$.
 It suffices to show for every $a\in  S$ and every unit vector $u\in{\mathbb R}^n$ the function $p(t)=-F(a+t\cdot u)$ satisfies $p''(0)<0.$
Computing 
 $$p'=-m_xf_u-m_yg_u$$ where$$
 m_x=\frac{\partial m}{\partial x},\ 
  m_y=\frac{\partial m}{\partial y}$$ and $f_u,g_u$ are the derivatives in direction $u$ at $a\in C,$
  $$ f_u=df(u),\quad  g_u=dg(u).$$
 Then
 $$p''=[m_{xx}(f_u)^2 + 2m_{xy}f_ug_u + m_{yy}(g_u)^2]\ -\ [ m_xf_{uu}+m_yg_{uu}].$$
Since $m$ is smooth and convex down it follows that $D^2m$ is negative semi-definite, so the first term is 
$\le 0$.
 By \ref{lem:smoothing min} we have $m_{x}\ge 0$ and $m_{y}\ge 0$. Also $g_{uu}>0$ everywhere and $f_{uu}>0$ on $S$.
 Since $m$ is linear along rays, and $m(x,y)>0$ on the positive quadrant ($x>0$ and $y>0$), it follows that $m_x+m_y>0$ on the positive quadrant, so the $m_xf_{uu}+m_yg_{uu}>0$ everywhere on $S$. Hence $p''<0$ everywhere on $S$ as required.
 \end{proof}

A component $N$ of the boundary of a projective manifold $M$ is {\em Hessian-convex} if
$N$ is locally the graph over the tangent hyperplane of a smooth function
with positive definite Hessian in some chart.
\begin{proposition}[smoothing convex boundary]\label{smoothbdry}
Suppose $M$ is a projective manifold and $\partial M$ is everywhere locally convex, and also strictly{\edit-}convex at 
one point on each component of $\partial M$.
Then there is a submanifold $N\subset M$ such that  $M\setminus \interior(N)\cong [0,1]\times\partial N$
and $\partial N$ is Hessian-convex. 
\end{proposition}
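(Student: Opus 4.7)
The plan is to construct $N$ by pushing $\partial M$ slightly into the interior so that the new boundary is Hessian-convex, and then identifying the region between $\partial M$ and $\partial N$ as a collar. The key local model is: in an affine chart, $\partial M$ is the graph $x_n = u(x')$ of a convex function $u$, so the perturbed hypersurface $x_n = u(x') + \varphi(x')$ with $\varphi \geq 0$ lies inside $M$ and is Hessian-convex precisely when $D^2(u+\varphi)$ is positive definite. The relative smoothing lemma (\ref{relativesmooth}) is exactly the tool for producing such a $\varphi$ supported on a prescribed compact convex subset of the chart domain while leaving $u$ unchanged near the chart boundary.

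The construction proceeds one component of $\partial M$ at a time by induction along a finite cover. I would cover the component by affine charts $U_1,\ldots,U_k$ with nested compact convex subsets $C_i^-\subset C_i\subset U_i$ so that the interiors of the $C_i^-$ already cover the component, ordering the cover so that the given strict-convexity point $p_0$ lies in $\interior(C_1^-)$ and $C_i\cap C_{i-1}^-\neq\emptyset$ for $i\geq 2$. At stage $i$, after an affine shift so that the current local defining function vanishes on $\partial C_i$ and is negative in $\interior(C_i)$, (\ref{relativesmooth}) produces a modification that agrees with the original near $\partial C_i$ and is Hessian-convex on $C_i^-$. The non-constancy required by the lemma is supplied at stage $1$ by $p_0$ and at stage $i\geq 2$ by the Hessian-convexity already in place on $C_{i-1}^-\cap C_i$. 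Because each modification is supported in $C_i$ and equals the previous function near $\partial C_i$, the Hessian-convexity earned at earlier stages is preserved, and after $k$ steps the perturbed boundary is Hessian-convex on the whole component.

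The hard part will be fitting these local graph perturbations into a single globally coherent inward push of $\partial M$, rather than independent edits of unrelated chart coordinates. I would handle this at the outset by fixing a smooth inward-pointing vector field $X$ transverse to $\partial M$ together with its flow collar $\Psi\colon\partial M\times[0,\delta]\to M$, and then encoding the cumulative local perturbations as a single global scalar displacement $\varphi\colon\partial M\to[0,\delta)$ along $X$-orbits. In each chart, coordinates can be chosen so that the graph direction $x_n$ is the flow coordinate of $X$; each local modification of $u$ then becomes an increment to $\varphi$, and the hypersurface $\partial N=\Psi(\{(x,\varphi(x)):x\in\partial M\})$ is globally well-defined. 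Taking $N=M\setminus\Psi(\partial M\times[0,\varphi))$ gives the desired submanifold, and the collar $M\setminus\interior(N)\cong\partial N\times[0,1]$ is parametrized directly by $\Psi$.
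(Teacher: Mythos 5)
Your overall strategy coincides with the paper's: represent $\partial M$ locally as the graph of a convex function, apply the relative smoothing lemma (\ref{relativesmooth}) to make it Hessian-convex on a slightly smaller convex set while leaving it unchanged near the edge, and propagate from the given strictly convex point along a chain of overlapping patches, using the already-earned Hessian-convexity (the set $S$ in \ref{relativesmooth}) both to preserve earlier progress and to supply the non-constancy hypothesis. The gap is in the step ``after an affine shift so that the current local defining function vanishes on $\partial C_i$.'' Lemma \ref{relativesmooth} requires $f|\partial C=0$, and you propose to achieve this on a convex set $C_i$ fixed in advance by subtracting an affine function. But a convex function restricts to an affine function on $\partial C_i$ only in degenerate cases; indeed at stage $i\ge 2$ your function is already Hessian-convex on a portion of $C_i$ meeting $\partial C_i$, so it is certainly not affine there, and no shift makes it vanish on all of $\partial C_i$. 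The boundary condition cannot be imposed on a prescribed domain; the domain must be produced by the function. This is exactly what the paper does: it cuts $M$ by a hyperplane $H$ near the strictly convex point, so the component of $M\setminus H$ on the boundary side is a small convex cap, takes the convex set to be the shadow of that cap on $H$, and then the height function vanishes on the boundary of that set by construction. So the convex domains have to be chosen adaptively as caps cut off by hyperplanes leaning into not-yet-smoothed territory, rather than as a cover fixed at the outset.

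A secondary problem is the global assembly by flowing along a transverse vector field $X$. Lemma \ref{relativesmooth} certifies Hessian-convexity only when the perturbation is the addition of a function in an affine graph coordinate; pushing by $\varphi$ along $X$-orbits agrees with this only if the orbits are affine segments in every chart, which you neither arrange nor can arrange for a single global $X$ in all charts at once. The machinery is also unnecessary: since each modification equals the previous hypersurface near the edge of its support, the replacements can be performed directly on the embedded hypersurface, as the paper does, and the region between $\partial M$ and $\partial N$ is a collar because in each cap the new boundary is a graph squeezed between the old boundary and the cutting hyperplane.
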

\begin{proof} Suppose $\partial M$ is strictly{\edit-}convex at $x\in\partial M$. Choose a (subset of a) hyperplane $H\subset M$
close to $x$ so that the component, $C$, of $M\setminus H$ containing $x$ is a small convex set
$V$. Using local affine coordinates,
 $S=C\cap\partial M$ is the graph over $H$ of a convex function $f,$ which
is $0$ on $H\cap\partial M$. Apply (\ref{relativesmooth}) to produce a smooth function $g$ with
positive definite Hessian and satisfying $0\le g\le f$. The graph of $g$ is a smooth hypersurface between
$H$ and $S$. Replace $S$ by this graph. This smoothes out part of $\partial M$.
Repeating this procedure smoothes the entire boundary.
 \end{proof}
   In a similar way one can prove:
\begin{corollary}[smoothing convex functions]
 Suppose $M$ is a connected affine manifold and $f:M\longrightarrow \R$ is  a convex function, which
 is strictly{\edit-}convex at some point. Given $\epsilon>0$ there is  $g:M\longrightarrow \R,$ which is smooth, Hessian-convex and satisfies $|f-g|<\epsilon$.
\end{corollary}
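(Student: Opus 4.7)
The plan is to iterate the relative smoothing lemma (\ref{relativesmooth}) over a countable locally finite cover of $M$ by convex chart domains, starting at the point where $f$ is strictly convex and propagating outward, with a summable sequence of perturbation bounds so the cumulative modification stays below $\varepsilon$.

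For the setup I would fix a countable locally finite cover $\{U_k\}_{k\ge 0}$ of $M$ by open sets each contained in an affine chart, together with nested compact convex sets $C_k^-\subset C_k\subset U_k$ whose interiors $\operatorname{int}(C_k^-)$ also cover $M$. I would order the enumeration so that $p\in\operatorname{int}(C_0^-)\subset U_0$, and so that for each $k\ge 1$ the set $C_k$ meets $\bigcup_{j<k}\operatorname{int}(C_j^-)$, which is possible by connectedness of $M$.

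Then I would construct inductively a sequence $f_0=f,f_1,f_2,\ldots$ of convex functions on $M$ such that $f_{k+1}$ equals $f_k$ outside a compact subset of $U_k$, is Hessian-convex on $S_{k+1}:=C_0^-\cup\cdots\cup C_k^-$, and satisfies $\|f_{k+1}-f_k\|_\infty<\varepsilon/2^{k+1}$. At step $k$ I work in affine coordinates on $U_k$, and (after possibly shrinking $C_k$ to a sublevel set of $f_k$ so that $f_k|_{\partial C_k}$ is constant, and subtracting that constant) apply (\ref{relativesmooth}) with $S:=S_k\cap C_k$; the Hessian-convex auxiliary function $\lambda(\alpha\|x\|^2+\beta)$ in that proof can be rescaled by a small $\lambda>0$ so that the sup-norm of the modification on $C_k$ is at most $\varepsilon/2^{k+1}$. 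Because (\ref{relativesmooth}) preserves Hessian-convexity on the input set $S$, Hessian-convexity on $S_k$ is carried through and extended to $C_k^-$. Local finiteness of the cover implies that on every compact $K\subset M$ the sequence $f_k$ stabilizes after finitely many steps, so $g:=\lim_k f_k$ is well-defined; by construction $g$ is smooth and Hessian-convex on $\bigcup_k C_k^-=M$, and $\|g-f\|_\infty<\sum_{k\ge 0}\varepsilon/2^{k+1}<\varepsilon$.

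The main obstacle is the base step $k=0$: $f$ is Hessian-convex on no open set a priori, and strict convexity at the single point $p$ must be leveraged to manufacture the initial Hessian-convex region $C_0^-$. I would pick an affine support function $\ell_0$ at $p$, so that $f-\ell_0\ge 0$ on a small convex neighborhood of $p$ with equality only at $p$; on a small compact convex $C_0$ centered at $p$ the boundary value of $f-\ell_0$ is bounded below by some $\delta>0$. Shrinking $C_0$ to the sublevel set $\{x:f(x)-\ell_0(x)\le\delta/2\}$, the function $h:=f-\ell_0-\delta/2$ satisfies $h\le 0$ on this new $C_0$, $h|_{\partial C_0}=0$, and $h(p)=-\delta/2<0$ so $h$ is non-constant; the hypotheses of (\ref{relativesmooth}) then apply with $S=\varnothing$, producing the required initial Hessian-convex region on $C_0^-$. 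After this delicate first step, the subsequent propagation is essentially the bookkeeping argument used in (\ref{smoothbdry}).
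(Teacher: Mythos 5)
Your overall architecture (iterate (\ref{relativesmooth}) outward from the strictly convex point with a summable perturbation budget, using local finiteness to define the limit) is the right one, and your base step is handled correctly. But there are two genuine gaps in the inductive step, both stemming from the hypothesis $f|_{\partial C}=0$ in (\ref{relativesmooth}). That hypothesis forces $C$ to be a compact sublevel set $\{f_k-\ell\le c\}$ of $f_k$ minus an affine function. Such a set that simultaneously (a) contains the prescribed $C_k^-$ and (b) is compactly contained in $U_k$ need not exist: if $f_k$ is still affine along a segment running from a point of $C_k^-$ out to $\partial U_k$ (which is allowed, since $f$ is strictly convex only at $p$ and earlier modifications are supported away from there), then $f_k-\ell$ is affine on that segment and every sublevel set containing its $C_k^-$ end either reaches $\partial U_k$ or misses $C_k^-$. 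Your parenthetical ``after possibly shrinking $C_k$ to a sublevel set'' does not repair this, because the sets $C_k^-$ were fixed in advance to cover $M$; shrinking them adaptively destroys the covering property, which is exactly what the induction is supposed to deliver.

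The second gap is the claim that rescaling the auxiliary quadratic by a small $\lambda>0$ makes the modification $\le\eps/2^{k+1}$. In the proof of (\ref{relativesmooth}) one needs $g\ge f/2$ on $C^-$ precisely so that $F=g$ there (this is where Hessian-convexity on $C^-$ comes from), and hence $F-f=g-f\ge -f/2=|f|/2$ on $C^-$. Shrinking $g$ toward $0$ only pushes $F-f$ up toward $|f|$; the perturbation is bounded below by a definite fraction of the depth $\sup_C|f-\ell|$ of the cap, which is determined by $f_k$ and the geometry and is not freely small when the cap is forced to contain a prescribed $C_k^-$. The way out of both difficulties is to abandon the fixed cover and propagate adaptively: base each cap at a point $q$ of the region $W_k$ where $f_k$ is already Hessian-convex (so $f_k-\ell_q$ vanishes to second order at $q$, the sublevel sets $\{f_k-\ell_q\le c\}$ for small $c$ are automatically compact neighborhoods of $q$, and the depth $c$ scales like the square of the radius of the cap), take $q$ within half a cap-radius of the frontier $\partial W_k$, and observe that $N$ steps of radius $r$ advance the frontier a distance $\sim Nr/2$ at total cost $\sim Nr^2$, so any compact set is absorbed with total perturbation as small as desired. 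This quadratic-versus-linear bookkeeping, together with an open-and-closed argument showing the process exhausts the connected manifold $M$, is the substance that your fixed-cover scheme omits and that the paper's one-line ``in a similar way'' also leaves to the reader.
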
 

\section{Benz{\'e}cri's Theorem}
\label{sec:Benzecri}

\begin{theorem}[Benz{\'e}cri \cite{Benz}]\label{benzthm} For each $n>1$ there is
a {\em Benz{\'e}cri constant} $R=R_{\mathcal B}(n)\le 5^{n-1}$ with the following property.
Suppose $\Omega$ is a properly convex open subset of $\RPn$ and $p\in\Omega $. Then
there is a projective transformation $\tau\in \PGL(n+1,{\mathbb R})$ such that $\tau(p)=0$ and
$B(1)\subset \tau(\Omega)\subset B(R),$ where $B(t)$ is the closed ball of radius $t$ in ${\mathbb R}^n$ centered at $0$.
\end{theorem}

The projective transformation $\tau$ is called a \emph{Benz{\'e}cri chart for $\Omega$ centered at $p$}
and the image $\tau(\Omega,p)$ is called {\em Benz{\'e}cri position}.
The following proof  is shorter and more elementary than the traditional proof using John ellipsoids, and also
  provides an algorithm to find a Benz{\'e}cri chart.
The set of Benz{\'e}cri charts for $(\Omega,p)$ is a compact subset of $\PGL(n+1,{\mathbb R})$.
\begin{proof} The proof is by induction on $n$. If $n=1,$ then $\Omega$ is an open interval in ${\mathbb R}P^1$ 
with closure a closed interval. There is a projective transformation taking $\Omega$ to $(-1,1)$ and $p$ to $0$ so
$R_{\mathcal B}(1)=1$. 

For the inductive step, choose a projective hyperplane $H^{n-1}\subset \RPn$ containing $p$. 
Then $\Omega'=\Omega\cap H$ is an open convex set in $H\cong{\mathbb R}P^{n-1}$ and $p\in\Omega'$.
Since $\Omega$ is properly convex, $\overline{\Omega}$ is disjoint from some projective hyperplane $K^{n-1}.$ Thus
$\overline{\Omega}'=\overline{\Omega}\cap H$ is disjoint from $H\cap K,$ which is a hyperplane in $H$. It
follows that $\Omega'$ is properly convex in $H$. By induction, and after choosing
appropriate coordinates on an affine patch in $H$ 
(or using a fixed coordinate system and applying a Benz{\'e}cri transformation to $\Omega'$), we may assume
 that $\Omega'\subset{\mathbb R}^{n-1}\times 0\subset H$
  with $p=0$ and $B^{n-1}(1)\subset\Omega'\subset B^{n-1}(r),$ where $r=R_{\mathcal B}(n-1)$.
 
 \if0
 \begin{figure}[ht]
 \begin{center}
 \psfrag{Omp}{$\Omega'$}
\psfrag{xn}{$x_n$}
\psfrag{a}{$-1$}
\psfrag{A}{$S$}
\psfrag{b}{$0$}
\psfrag{c}{$1$}
\psfrag{w}{$z$}
\psfrag{p}{${\mathbb R}^{n-1}\times 1$}
\psfrag{H}{${\mathbb R}^{n-1}\times 0\subset H$}
\psfrag{m}{${\mathbb R}^{n-1}\times (-1)$}
\psfrag{subset}{$A=\Omega\cap[{\mathbb R}^{n-1}\times(-1)]\subset S$}
	 \includegraphics[scale=0.4]{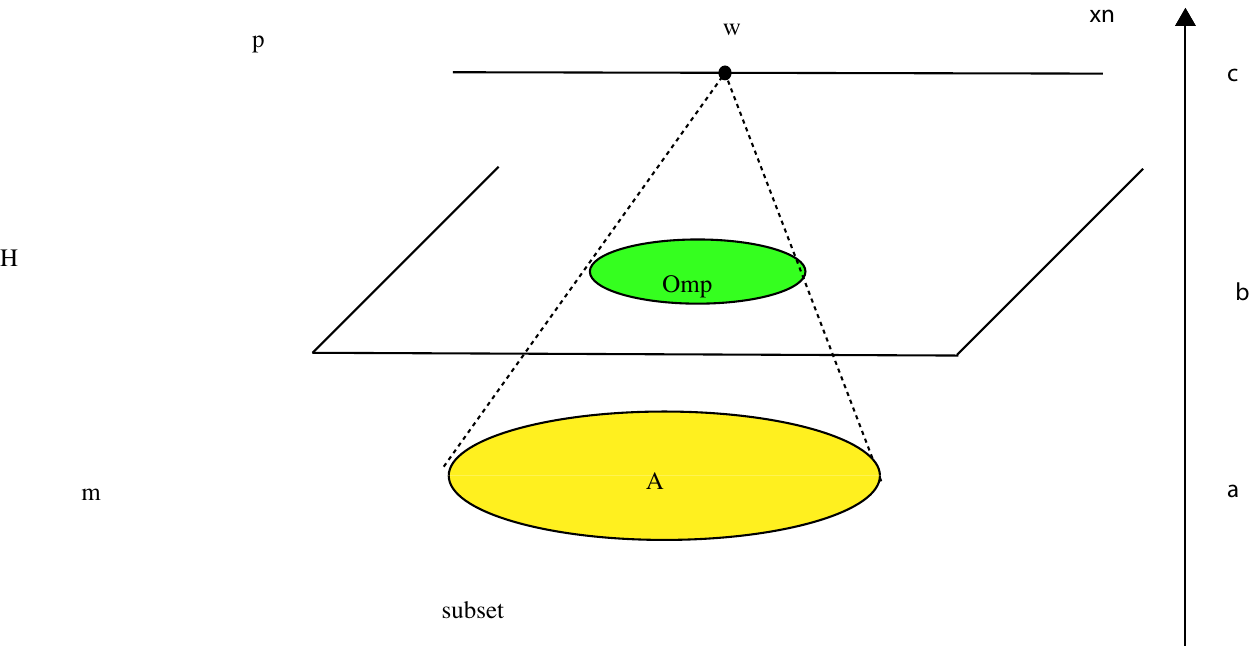}
	 \end{center}
 \caption{Shadows} \label{benzfig1}
\end{figure}
\fi
 \begin{figure}[ht]
 \begin{center}
 	 \includegraphics[scale=1]{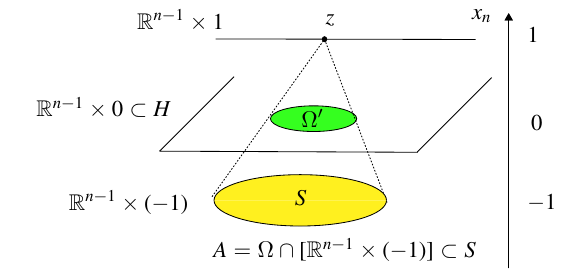}
	 \end{center}
 \caption{Shadows} \label{benzfig1}
\end{figure}

There are affine coordinates on $\RPn\setminus K={\mathbb R}^n$ so that the affine part of
 $H$ is ${\mathbb R}^{n-1}\times 0$. In what follows we will apply projective transformations in $\PGL(n+1,{\mathbb R})$
 which are the identity on $H$. This moves $\Omega$ while keeping $\Omega'$ fixed. The first step is to arrange
 that $$\Omega\subset{\mathbb R}^{n-1}\times[-1,1]$$ and $\partial\Omega$ contains a point $z\in{\mathbb R}^{n-1}\times 1$. Then we
 may shear so that $z=(0,\cdots,0,1)$.

 Next consider the one-parameter group $A(t)\in \PGL(n+1,{\mathbb R})$ fixing $z$ and $H$. As $t$ varies, the points
 that are not fixed move between $z$ and $H$.  This group preserves the family of affine  planes $\{x_n=\text{const}\}$ in ${\mathbb R}^n$. 
Since it fixes $z$ the affine
 plane ${\mathbb R}^{n-1}\times 1$ is preserved (though not fixed) by this group.
 Thus we may move $\Omega$ by an element of this group so that 
  it is still contained in ${\mathbb R}^{n-1}\times[-1,1]$, 
  still contains $z$, and  $$A=\Omega\cap\big({\mathbb R}^{n-1}\times(-1)\big)$$  is
   not empty.  Let $C\subset {\mathbb R}^{n-1}\times[-1,1]$ be the set of points
    on all lines $\ell$ passing through $z$ and
 some point in $\Omega'$. Then $S=C\cap [{\mathbb R}^{n-1}\times (-1)]$
  is the shadow from 
 the point $z$ of $\Omega'$ on 
 ${\mathbb R}^{n-1}\times (-1)$. Since $\Omega$ is convex it follows that $A\subset S$. 
 Since $\Omega'\subset B_r(0)$ it follows that $S$ is contained in the shadow of $B_r(0),$ which
 is  the ball $D\subset{\mathbb R}^{n-1}\times(-1)$ 
 of radius $2r$ center $(0,\cdots,0,-1)$. Finally, let $X$ be the union of all line segments
  in ${\mathbb R}^{n-1}\times[-1,1]$ 
 containing a point of $S$ and $B_r$. This is contained in the union of the shadows on ${\mathbb R}^{n-1}\times 1$ of
 $B_r(0)\subset H$ from points in $D$. This is a ball in ${\mathbb R}^{n-1}\times 1$ of radius $4r$ center $z.$
It lies within distance $1+r\le 5r$ from $0$. \end{proof}

Let ${\mathcal S}$ be the set of all $\Omega\subset{\mathbb R}P^n,$ which
are disjoint from some hyperplane and compact,  convex, and with non-empty interior, equipped with the Hausdorff topology.
Let ${\mathcal S}_*\subset{\mathcal S}\times{\mathbb R}P^n$ be the set of all pairs $(\Omega,p)$ with $p$ in the interior of $\Omega$
with the subspace topology of the product topology. There is an action of $\tau\in\PGL(n+1,{\mathbb R})$ on ${\mathcal S}_*$ given by
$\tau(\Omega,p)=(\tau\Omega,\tau p)$. The quotient of ${\mathcal S}_*$ by this action is given the quotient
topology and denoted ${\mathcal B}$. 

\begin{corollary}[Benz{\'e}cri's compactness theorem \cite{Benz}]\label{benzcpct}  ${\mathcal B}$ is compact.
\end{corollary}

It follows that there is a compact set of preferred charts centered on a point in a properly convex manifold $M$.
Different preferred charts give Euclidean coordinates around $p$ which vary in a compact family
independent of $M$, depending only on dimension.

\small
\bibliography{koszul.bib} 

\begin{thebibliography}{10}

\bibitem{BC1}
M.~Baker and D.~Cooper.
\newblock A combination theorem for convex hyperbolic manifolds, with
  applications to surfaces in 3-manifolds.
\newblock {\em J. Topol.}, 1(3):603--642, 2008.

\bibitem{SB}
S.~{Ballas}.
\newblock {Finite Volume Properly Convex Deformations of the Figure Eight
  Knot}.
\newblock {\em Geom. Dedicata}, 178:49--73, 2015.

\bibitem{BCL}
S.~Ballas, D.~Cooper, and A.~Leitner.
\newblock A classification of generalized cusps in projective manifolds.
\newblock {\em In Preparation}, 2015.

\bibitem{BenoistIV}
Y.~Benoist.
\newblock Convexes divisibles. {IV}. {S}tructure du bord en dimension 3.
\newblock {\em Invent. Math.}, 164(2):249--278, 2006.

\bibitem{Benz}
J.-P. Benz{\'e}cri.
\newblock Sur les vari\'et\'es localement affines et localement projectives.
\newblock {\em Bull. Soc. Math. France}, 88:229--332, 1960.

\bibitem{CEG}
R.~D. Canary, D.~B.~A. Epstein, and P.~Green.
\newblock Notes on notes of {T}hurston.
\newblock In {\em Analytical and geometric aspects of hyperbolic space
  ({C}oventry/{D}urham, 1984)}, volume 111 of {\em London Math. Soc. Lecture
  Note Ser.}, pages 3--92. Cambridge Univ. Press, Cambridge, 1987.

\bibitem{CHOIGS}
S.~Choi.
\newblock Geometric structures on orbifolds and holonomy representations.
\newblock {\em Geom. Dedicata}, 104:161--199, 2004.

\bibitem{Choi1}
S.~{Choi}.
\newblock {The convex real projective manifolds and orbifolds with radial ends:
  the openness of deformations}.
\newblock {\em ArXiv e-prints}, Nov. 2010.

\bibitem{choiends}
S.~{Choi}.
\newblock {The classification of ends of properly convex real projective
  orbifolds II: Properly convex radial ends and totally geodesic ends}.
\newblock {\em ArXiv e-prints}, Jan. 2015.

\bibitem{CHOW}
B.~Chow, S.-C. Chu, D.~Glickenstein, C.~Guenther, J.~Isenberg, T.~Ivey,
  D.~Knopf, P.~Lu, F.~Luo, and L.~Ni.
\newblock {\em The {R}icci flow: techniques and applications. {P}art {III}.
  {G}eometric-analytic aspects}, volume 163 of {\em Mathematical Surveys and
  Monographs}.
\newblock American Mathematical Society, Providence, RI, 2010.

\bibitem{CDW}
D.~{Cooper}, J.~{Danciger}, and A.~{Wienhard}.
\newblock {Limits of geometries}.
\newblock {\em ArXiv e-prints}, Aug. 2014.

\bibitem{CL}
D.~Cooper and D.~D. Long.
\newblock A generalization of the {E}pstein-{P}enner construction to projective
  manifolds.
\newblock {\em Proc. Amer. Math. Soc.}, 143(10):4561--4569, 2015.

\bibitem{CLT1}
D.~Cooper, D.~D. Long, and S.~Tillmann.
\newblock On convex projective manifolds and cusps.
\newblock {\em Adv. Math.}, 277:181--251, 2015.

\bibitem{CM1}
M.~Crampon and L.~Marquis.
\newblock Un lemme de {K}azhdan-{M}argulis-{Z}assenhaus pour les g\'eom\'etries
  de {H}ilbert.
\newblock {\em Ann. Math. Blaise Pascal}, 20(2):363--376, 2013.

\bibitem{CM2}
M.~Crampon and L.~Marquis.
\newblock Finitude g\'eom\'etrique en g\'eom\'etrie de {H}ilbert.
\newblock {\em Ann. Inst. Fourier (Grenoble)}, 64(6):2299--2377, 2014.

\bibitem{FK}
J.~Faraut and A.~Kor{\'a}nyi.
\newblock {\em Analysis on symmetric cones}.
\newblock Oxford Mathematical Monographs. The Clarendon Press, Oxford
  University Press, New York, 1994.
\newblock Oxford Science Publications.

\bibitem{Gold2}
W.~{Goldman}.
\newblock {\em Geometric Structures on manifolds}.
Book, in preparation. http://www.math.umd.edu/~wmg/gstom.pdf 

\bibitem{Gold1}
W.~{Goldman}.
\newblock {\em Projective geometry on manifolds}.
\newblock Versions 11 March 1988 and 2 June 2009 downloaded from
  http://www2.math.umd.edu/$\sim$wmg/, 1988--2009.

\bibitem{GOLD}
W.~M. Goldman.
\newblock Geometric structures on manifolds and varieties of representations.
\newblock In {\em Geometry of group representations ({B}oulder, {CO}, 1987)},
  volume~74 of {\em Contemp. Math.}, pages 169--198. Amer. Math. Soc.,
  Providence, RI, 1988.

\bibitem{GW1}
R.~E. Greene and H.~Wu.
\newblock Approximation theorems, {$C^{\infty }$} convex exhaustions and
  manifolds of positive curvature.
\newblock {\em Bull. Amer. Math. Soc.}, 81:101--104, 1975.

\bibitem{Gw2}
R.~E. Greene and H.~Wu.
\newblock {$C^{\infty }$} convex functions and manifolds of positive curvature.
\newblock {\em Acta Math.}, 137(3-4):209--245, 1976.

\bibitem{Hirsch}
M.~W. Hirsch.
\newblock {\em Differential topology}.
\newblock Springer-Verlag, New York-Heidelberg, 1976.
\newblock Graduate Texts in Mathematics, No. 33.

\bibitem{HM}
M.~W. Hirsch and B.~Mazur.
\newblock {\em Smoothings of piecewise linear manifolds}.
\newblock Princeton University Press, Princeton, N. J.; University of Tokyo
  Press, Tokyo, 1974.
\newblock Annals of Mathematics Studies, No. 80.

\bibitem{Koecher1957}
M.~Koecher.
\newblock Positivit\"atsbereiche im {$R^n$}.
\newblock {\em Amer. J. Math.}, 79:575--596, 1957.

\bibitem{Kos1}
J.-L. Koszul.
\newblock Vari\'et\'es localement plates et convexit\'e.
\newblock {\em Osaka J. Math.}, 2:285--290, 1965.

\bibitem{Kos2}
J.-L. Koszul.
\newblock D\'eformations de connexions localement plates.
\newblock {\em Ann. Inst. Fourier (Grenoble)}, 18(fasc. 1):103--114, 1968.

\bibitem{AL}
A.~{Leitner}.
\newblock {A Classification of subgroups of $SL(4,R)$ Isomorphic to $R^{3}$ and
  Generalized Cusps in Projective 3--Manifolds}.
\newblock {\em ArXiv e-prints}, July 2015.

\bibitem{LIMA}
E.~L. Lima.
\newblock On the local triviality of the restriction map for embeddings.
\newblock {\em Comment. Math. Helv.}, 38:163--164, 1964.

\bibitem{Marq1}
L.~Marquis.
\newblock Espace des modules marqu\'es des surfaces projectives convexes de
  volume fini.
\newblock {\em Geom. Topol.}, 14(4):2103--2149, 2010.

\bibitem{MXY}
L.~Marquis.
\newblock Surface projective convexe de volume fini.
\newblock {\em Ann. Inst. Fourier (Grenoble)}, 62(1):325--392, 2012.

\bibitem{Marquishandbook}
L.~Marquis.
\newblock Around groups in {H}ilbert geometry.
\newblock In {\em Handbook of {H}ilbert geometry}, volume~22 of {\em IRMA Lect.
  Math. Theor. Phys.}, pages 207--261. Eur. Math. Soc., Z\"urich, 2014.

\bibitem{Pap}
A.~Papadopoulos and M.~Troyanov.
\newblock Harmonic symmetrization of convex sets and of {F}insler structures,
  with applications to {H}ilbert geometry.
\newblock {\em Expo. Math.}, 27(2):109--124, 2009.

\bibitem{SY}
R.~Schoen and S.-T. Yau.
\newblock {\em Lectures on differential geometry}.
\newblock Conference Proceedings and Lecture Notes in Geometry and Topology, I.
  International Press, Cambridge, MA, 1994.
\newblock Lecture notes prepared by Wei Yue Ding, Kung Ching Chang [Gong Qing
  Zhang], Jia Qing Zhong and Yi Chao Xu, Translated from the Chinese by Ding
  and S. Y. Cheng, Preface translated from the Chinese by Kaising Tso.

\bibitem{Shima}
H.~Shima.
\newblock {\em The geometry of {H}essian structures}.
\newblock World Scientific Publishing Co. Pte. Ltd., Hackensack, NJ, 2007.

\bibitem{SHIMAYAQI}
H.~Shima and K.~Yagi.
\newblock Geometry of {H}essian manifolds.
\newblock {\em Differential Geom. Appl.}, 7(3):277--290, 1997.

\bibitem{Smith}
P.~A.~N. Smith.
\newblock Counterexamples to smoothing convex functions.
\newblock {\em Canad. Math. Bull.}, 29(3):308--313, 1986.

\bibitem{TAM}
L.-F. Tam.
\newblock Exhaustion functions on complete manifolds.
\newblock In {\em Recent advances in geometric analysis}, volume~11 of {\em
  Adv. Lect. Math. (ALM)}, pages 211--215. Int. Press, Somerville, MA, 2010.

\bibitem{WPT}
W.~P. Thurston.
\newblock {\em The Geometry and Topology of Three-Manifolds}.
\newblock Princeton Univ. Math. Dept. Available from
  http://msri.org/publications/books/gt3m/, 1979.

\bibitem{Vinberg1963homog}
{\`E}.~B. Vinberg.
\newblock The theory of homogeneous convex cones.
\newblock {\em Trudy Moskov. Mat. Ob\v s\v c.}, 12:303--358, 1963.

\bibitem{WH}
J.~H.~C. Whitehead.
\newblock On {$C^1$}-complexes.
\newblock {\em Ann. of Math. (2)}, 41:809--824, 1940.

\bibitem{Witte}
D.~Witte.
\newblock Superrigidity of lattices in solvable {L}ie groups.
\newblock {\em Invent. Math.}, 122(1):147--193, 1995.

\end{thebibliography}
\bibliographystyle{abbrv}

\end{document}